\newcommand{\arxiv}[1]{\href{http://www.arXiv.org/abs/#1}{arXiv:#1}}
\newtheorem{theorem}{Theorem}[section]
\newtheorem{lemma}[theorem]{Lemma}
\newtheorem{proposition}[theorem]{Proposition}
\newtheorem{corollary}[theorem]{Corollary}
\theoremstyle{remark}
\def\ZZ{\mathbb{Z}}
\def\Rmax{\mathbb{R}_{\max}}
\def\R{\mathbb{R}}
\def\Rp{\R_+}
\def\Rpn{\Rp^n}
\def\Rpnn{\Rp^{n\times n}}
\def\supp{\operatorname{supp}}
\def\spann{\operatorname{span}}
\def\core{\operatorname{core}}
\def\maxmu{\Tilde{\mu}}
\def\goodclass{{\mathcal S}}
\def\crit{{\mathcal C}}
\def\digr{{\mathcal G}}
\def\itoj{i\;-\;j}
\def\mutonu{\mu\;-\;\nu}
\def\coremax{\core}
\def\vmax{V}
\def\spanmax{\spann}
\def\Lmax{\Lambda}
\def\rhomax{\rho}
\def\mrho{M_{\rho}}
\def\amrho{A_{\rho}}
\def\amrhomrho{A_{\mrho\mrho}}
\begin{document}

\title{On the max-algebraic core of a nonnegative matrix}
\thanks{This research was supported by EPSRC grant RRAH15735.
Serge\u{\i} Sergeev also acknowledges the support of RFBR-CNRS grant
11-0193106 and RFBR grant 12-01-00886.}

\author{Peter Butkovi{\v{c}}}
\address{Peter Butkovi{\v{c}}, University of Birmingham,
School of Mathematics, Watson Building, Edgbaston B15 2TT, UK}
\email{P.Butkovic@bham.ac.uk}

\author{Hans Schneider}
\address{Hans Schneider, University of Wisconsin-Madison,
Department of Mathematics, 480 Lincoln Drive, Madison WI 53706-1313,
USA} \email{hans@math.wisc.edu}

\author{Serge\u{\i} Sergeev}
\address{Serge\u{\i} Sergeev, University of Birmingham,
School of Mathematics, Watson Building, Edgbaston B15 2TT, UK}
\email{sergeevs@maths.bham.ac.uk}

\begin{abstract}
The max-algebraic core of a nonnegative matrix is the intersection
of column spans of all max-algebraic matrix powers. Here we
investigate the action of a matrix on its core. Being closely
related to ultimate periodicity of matrix powers, this study leads
us to new modifications and geometric characterizations of robust,
orbit periodic and weakly stable matrices.

{\it{Keywords:}} Max algebra, ultimate periodicity, Perron-Frobenius, eigenspace,
core.
\vskip0.1cm {\it{AMS Classification:}} 15A80, 15A18, 15A03
\end{abstract}

\maketitle

\section{Introduction}

The concept of the matrix core was introduced by Pullman~\cite{Pul-71} in the
case of nonnegative algebra. Defined as the intersection of nonnegative column spans of matrix powers
\begin{equation}
\label{core}
\core(A)=\bigcap_{t=1}^{\infty} \spann(A^t),
\end{equation}
it was used to derive an alternative geometric proof of the
Perron-Frobenius theorem. Tam and Schneider~\cite{TS-94} developed an
extension of Pullman's results to cone-preserving maps.
The core is also related to the limiting sets of
Markov chains investigated by Sierksma~\cite{Sie-99}, or the limiting sets
of matrix groups described, e.g., by Seneta~\cite{Sen:81} and Hartfiel~\cite{Har:02}.

In a previous work~\cite{BSS-core1} we investigated two cores of a nonnegative
matrix: the ``traditional'' core defined by Pullman, and the ``new'' core
defined in max algebra.   Generalizing an
argument of Pullman we gave a simultaneous proof that in both algebras, $\core(A)$ equals the Minkowski sum of all eigencones
(i.e., cones of nonnegative eigenvectors) of matrix powers.
We also proved the periodicity of the sequence of eigencones of matrix powers and described extremals of the core, in both algebras.
These results can be seen in the framework of unification and parallel development of nonnegative linear algebra and
max algebra. To this end, let us also mention the previous works on generators,
extremals and bases of max cones~\cite{BSS}, and Z-matrix
equations~\cite{BSS-zeq}.

In the present work we are interested in the action of a nonnegative matrix
on its max-algebraic core. We observe that while this action is bijective in the
usual (nonnegative) algebra, it is in general only surjective in the case of max algebra. Further, Butkovi\v{c} et al. previously investigated
certain classes of matrices related to the ultimate periodicity, in
particular, robust matrices~\cite{BCG-09} and weakly stable matrices~\cite{BSS-ws}.
Following the same ideas, Sergeev and Schneider~\cite{SerSch} considered orbit periodic matrices. Here we introduce the core restrictions of these notions. We show that this leads to a
more geometric and in a sense, more transparent characterisation of robust and orbit periodic matrices. In the case of weakly stable matrices, the core
restriction does not yield any alternative characterization, but it gives
a necessary and sufficient condition
for a matrix to be bijective on its core, in max algebra.

The paper is organized as follows. Section 2 is occupied with preliminaries on max-algebraic cyclicity, Frobenius normal form and reducible spectral theory.
Here we also give a review of the results on access relations between classes of
Frobenius normal form of matrix powers, formulated in~\cite{BSS-core1}.
In Section 3 we review the equality between the core and Minkowski sum of
eigencones of matrix powers obtained in~\cite{BSS-core1}
for the case of max algebra, and introduce the classes
of matrices mentioned above. We add here some new observations, on the core of
integer matrices in max-plus algebra, and on the core in max-min algebra.
Section 4 contains our main results on core restrictions of robustness, orbit periodicity and weak stability. Here we develop a geometric characterization of
robust and orbit periodic matrices in terms of the core, and establish an equivalence between bijectivity and the core restriction of weak stability.

\section{Max-algebraic cyclicity and spectral theory}

\subsection{Max algebra: main objects}

By max algebra we understand the set of nonnegative numbers $\Rp$ where the role of
addition is played by taking maximum of two numbers: $a\oplus b:=\max(a,b)$, and the multiplication
is as in the usual arithmetics. This is carried over to matrices and vectors like in the usual linear algebra
so that for two matrices $A=(a_{ij})$ and $B=(b_{ij})$ of appropriate sizes,
$(A\oplus B)_{ij}=a_{ij}\oplus b_{ij}$ and $(A\otimes B)_{ik}=\bigoplus_k a_{ik}b_{kj}$.
Notation $A^k$ will stand for the $k$th max-algebraic power.

With a matrix $A=(a_{ij})\in\Rp^{n\times n}$
we associate a weighted (di)graph $\digr(A)$ with the set of nodes
$N=\{1,\dots,n\}$ and set of edges~$E\subseteq N\times N$ containing a pair~$(i,j)$ if and only
if~$a_{ij}\neq 0$; the weight of an edge~$(i,j)\in E$ is defined to be~$w(i,j):=a_{ij}$.
A graph with just one node and no
edge will be called {\em trivial}.

A path $P$ in $\digr(A)$ is a sequence of nodes $i_0,i_1,\ldots,i_t$
such that each pair $(i_0,i_1)$,\\$(i_1,i_2),\ldots,(i_{t-1},i_t)$
is an edge in $\digr(A)$. It has {\em length} $l(P):=t$ and {\em
weight} $w(P):=w(i_0,i_1) \cdot w(i_1,i_2) \cdots w(i_{t-1},i_t)$,
and is called an $\itoj$ path if $i_0=i$ and $i_t=j$. $P$ is called
a {\em cycle} if $i_0=i_t$, and a cycle is called {\em elementary}
if all nodes of the cycle are different.

$A=\left( a_{ij}\right) \in \Rp^{n\times n}$ is
irreducible if $\digr(A)$ is trivial or for any $i,j\in \{1,\ldots, n\}$ there is an
$\itoj$ path. Otherwise $A$ is reducible.

A set $V\subseteq\Rpn$ will be called a {\em max cone} if
1) $\alpha v\in V$ for
all $v\in V$ and $\alpha\in\Rp$, 2) $u\oplus v\in V$ for $u,v\in V$. Max cones are also
known as idempotent semimodules, see~\cite{KM:97,LM-98}.
A max cone $V$ is said to be {\em generated} by $S\subseteq\Rpn$ if each $v\in V$ can be represented
as a max combination $v=\bigoplus_{x\in S} \alpha_x x$ where only finitely many (nonnegative) $\alpha_x$ are different from
zero. When $V$ is generated (we also say ``spanned'') by $S$,
this is denoted $V=\spanmax(S)$. When $V$ is generated by the columns of a matrix $A$,
this is denoted $V=\spanmax(A)$. Max cones are max-algebraic analogues of convex cones.

A vector $z$ in a max cone $V\subseteq\Rpn$ is called an {\em extremal} if $z=u\oplus v$ and $u,v\in V$ imply
$z=u$ or $z=v$. Any finitely generated max cone is generated by its extremals, see Wagneur~\cite{Wag-91} and~\cite{BSS,GK-07} for recent extensions. 

The {\em maximum cycle geometric mean} of~$A$ is defined by
\begin{equation}\label{mcgm}
\rho(A)=\max\{ w(C)^{1/l(C)};\; C \text{ is a cycle in }\digr(A)\} \enspace
\end{equation}
The {\em critical graph} of $A$, denoted by $\crit(A)$, consists of all nodes and edges belonging to the cycles
which attain the maximum in~\eqref{mcgm}. The set of such nodes will be called {\em critical} and denoted $N_c$;
the set of such edges will be called {\em critical} and denoted $E_c$. Observe that the critical graph, defined as above,
consists of several strongly connected subgraphs of $\digr(A)$. Maximal such subgraphs are the
{\em strongly connected components} of $\crit(A)$. 

If for $A\in\Rpnn$ we have
$A\otimes x=\rho x$
with $\rho\in\Rp$ and a nonzero $x\in\Rpn$, then $\rho$ is a {\em max(-algebraic) eigenvalue} and $x$ is a {\em max(-algebraic)
eigenvector} associated with $\rho$. The set of
max eigenvectors $x$ associated with~$\rho$, with the zero vector adjoined to it, is a max cone further denoted
by $\vmax(A,\rho)$. It is called the {\em eigencone} of $A$ associated with
$\rho$.

In general, a matrix $A\in\Rpnn$ may have several max eigenvalues. The greatest max eigenvalue
is equal to $\rho(A)$ (see~\cite{BCOQ,But:10,CG:79,HOW:05}), and it is called the
{\em principal eigenvalue}. The corresponding eigencone is called the
{\em principal eigencone}. It is also known that if $A$ is irreducible then
$\rho(A)$ is the only eigenvalue, which we call the {\em max-(algebraic) Perron root} of $A$.

\if{
An irreducible $A\in\Rpnn$ has a unique max-algebraic eigenvalue equal to the
m.c.g.m.~$\lambda(A)$~\cite{BCOQ,But:10,CG:79,HOW:05}.
In general $A$ may have several max eigenvalues, which we will describe later. The greatest such eigenvalue will be
denoted by $\rhomax(A)$, and called the {\em principal eigenvalue} of $A$, and
the {\em principal eigencone} is the eigencone of $A$ associated with it.
 In the irreducible case, the unique max eigenvalue $\rhomax(A)=\lambda(A)$
will also be called the {\em max(-algebraic) Perron root}.
}\fi

There is an explicit
description of $\vmax(A,\rhomax(A))$, see Theorem~\ref{t:FVmaxalg} below.
It uses the {\em Kleene star}
\begin{equation}
\label{def:kleenestar}
A^*=I\oplus A\oplus A^2\oplus A^3\oplus\ldots,
\end{equation}
where $I$ denotes the identity matrix. Series~\eqref{def:kleenestar} converges if and only if
$\rhomax(A)\leq 1$, in which case $A^*=I\oplus A\oplus\ldots\oplus A^{n-1}$.
Note that if $\rhomax(A)\neq 0$, then $\rhomax(A/\rhomax(A))=1$, and so
$(A/\rhomax(A))^*$ always converges.

The {\em path interpretation} of max-algebraic matrix powers $A^{\otimes l}$
is that each entry $a^{\otimes l}_{ij}$ is equal to the greatest weight of
$\itoj$ paths with length $l$. Consequently, for $i\neq j$, the entry $a^*_{ij}$ of $A^*$ is equal to the
greatest weight of $\itoj$ paths (with no length restrictions).

For a strongly connected graph $\digr$ define its {\em cyclicity} $\sigma$ as the gcd of the
lengths of all elementary cycles and the cyclicity of a trivial graph to be $1$.
For a (general) graph containing several
maximal strongly connected components (such as the critical graph $\crit(A)$), cyclicity is defined as the lcm of the cyclicities
of the strongly connected components. A graph with cyclicity $1$ is called {\em primitive}.

The following result demonstrates importance
of cyclicity of critical graph in max algebra.

\begin{theorem}[Cyclicity Theorem, Cohen~et~al.~\cite{CDQV-83}]
\label{t:Cycl1}
Let $A\in\Rpnn$ be irreducible and let $\sigma$ be the cyclicity of $\crit(A)$. Then $\sigma$ is the smallest $p$ such that there exists
$T(A)$ with $A^{t+p}=\rhomax^{p}(A) A^{t}$ for all $t\geq T(A)$.
\end{theorem}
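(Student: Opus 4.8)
The plan is to normalize and then argue combinatorially via the path interpretation of matrix powers. Since $A$ is irreducible its only max eigenvalue is $\rhomax(A)$; if $\digr(A)$ is trivial the statement holds with $\sigma=1$ (here $A^{t+1}=\rhomax(A)A^t$ reduces to $0=0$), so assume $\digr(A)$ is nontrivial, whence all edge weights and all cycle weights are positive and $\rhomax(A)>0$. Replacing $A$ by $A/\rhomax(A)$ we may assume $\rhomax(A)=1$, so that every critical cycle has weight $1$, every non-critical cycle has geometric mean $\lambda<1$, and the claim reduces to: $\sigma$ is the least $p$ with $A^{t+p}=A^t$ for all large $t$. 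By the path interpretation, $a^{\otimes t}_{ij}$ is the greatest weight of an $\itoj$ walk of length exactly $t$, so everything becomes a statement about lengths and weights of walks in $\digr(A)$.

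First I would show that for $t$ large, an optimal $\itoj$ walk of length $t$ must meet the critical graph $\crit(A)$. Any $\itoj$ walk decomposes into a simple $\itoj$ path of length less than $n$ together with a collection of cycles, so a walk of length $t$ carries cycles of total length at least $t-n$; since every non-critical cycle of length $l$ has weight at most $\lambda^{l}$, a walk avoiding critical nodes has weight at most $M\lambda^{t-n}$ for a fixed constant $M$, which tends to $0$. By irreducibility any $i$ and $j$ connect to $\crit(A)$, so one can build $\itoj$ walks that circulate on critical cycles of weight $1$ and realize weight bounded below by a fixed $\delta>0$. Hence there is a transient $T$, uniform in $i,j$, beyond which every optimal long walk passes through a critical node.

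The core of the argument is then a length-adjustment lemma for the strongly connected components of $\crit(A)$. For a component $C$ of cyclicity $\sigma_C$ and a critical node $c\in C$, the set of lengths of weight-$1$ closed walks at $c$ is, up to a finite exception set, exactly the set of multiples of $\sigma_C$: such a walk stays inside $\crit(A)$, hence inside $C$, and $\sigma_C$ divides every cycle length in $C$, while conversely all sufficiently large multiples of $\sigma_C$ are attained. Since $\sigma=\mathrm{lcm}_C\,\sigma_C$ is a multiple of each $\sigma_C$, for $t\geq T$ I take an optimal length-$t$ walk, locate a critical node on it, and splice in (or, in the reverse direction, delete) a weight-$1$ critical detour of length $\sigma$; this yields $a^{\otimes(t+\sigma)}_{ij}=a^{\otimes t}_{ij}$ for all $i,j$, that is $A^{t+\sigma}=A^t$.

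For minimality I would inspect a single critical node. At $c\in C$ one has $a^{\otimes t}_{cc}=1$ exactly when $\sigma_C\mid t$ (for large $t$) and $a^{\otimes t}_{cc}<1$ otherwise, since a weight-$1$ closed walk at $c$ consists of critical cycles and hence has length divisible by $\sigma_C$. Thus the scalar sequence $\bigl(a^{\otimes t}_{cc}\bigr)_t$ has exact period $\sigma_C$, so any admissible period $p$ satisfies $\sigma_C\mid p$ for every $C$, whence $\sigma\mid p$ and $\sigma$ is least. The hard part will be the length-adjustment lemma together with the uniformity of $T$: one must control simultaneously, over all pairs $(i,j)$, both that long optima are forced through the critical graph and that the spliced critical detours keep the weight at its optimal value, which is precisely where the cyclicity structure of the critical components enters.
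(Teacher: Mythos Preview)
The paper does not prove this theorem itself; it is quoted from Cohen et al. However, the paper does prove an extension (Theorem~\ref{t:cyclicity}) using precisely the strategy you outline: normalize to $\rhomax(A)=1$, bound walks avoiding $\crit(A)$ by $w(\Pi_2)\mu^{t-n}$ so that long optimal walks are forced through a critical node, and then invoke Lemma~\ref{l:schur} to insert critical cycles of the required length. So your overall plan matches the paper's.

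There is, however, a genuine gap in your length-adjustment step. You propose to take an optimal length-$t$ walk through a critical node $k\in C$ and ``splice in (or, in the reverse direction, delete) a weight-$1$ critical detour of length $\sigma$.'' But your own lemma says that the achievable weight-$1$ closed-walk lengths at $k$ are the multiples of $\sigma_C$ only \emph{up to a finite exception set}; the fixed number $\sigma=\mathrm{lcm}_C\,\sigma_C$ may well lie in that exception set, so no detour of length exactly $\sigma$ need exist at $k$. The deletion direction is worse: an optimal walk of length $t+\sigma$ need not contain any detachable critical segment of length $\sigma$ at all. The paper's proof of Theorem~\ref{t:cyclicity} sidesteps this by first passing to $A^{\sigma}$, whose critical graph is primitive by Lemmas~\ref{l:sameperron} and~\ref{l:critpowers}; in the primitive case the required critical closed walk has length $t-\ell$ for some fixed $\ell\leq 2(n-1)$, which is large whenever $t$ is, so Lemma~\ref{l:schur} applies directly and yields $a_{ij}^{(t)}=w(\Pi_1)$ for all large $t$. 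The general identity then follows by multiplying $(A^{\sigma})^{l+1}=(A^{\sigma})^{l}$ on the left by $A^s$. You can also repair your direct argument by abandoning splice/delete: show instead that for all large $t$ in a given residue class modulo $\sigma$, the value $a^{\otimes t}_{ij}$ equals one fixed quantity, by constructing optimal walks of each required length from scratch (short $i\to k$ and $k\to j$ pieces of bounded length, plus a long critical closed walk at $k$) rather than by modifying an existing optimal walk. Your minimality argument via the diagonal entry $a^{\otimes t}_{cc}$ is correct as stated.
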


This result is closely related to the theory of graph exponents as presented,
for instance, in Brualdi-Ryser~\cite{BR}. In particular, it exploits the
following number-theoretic lemma due to Schur and Frobenius:

\begin{lemma}[\cite{BR}, Lemma 3.4.2]
\label{l:schur} Let $n_1,\ldots, n_m$ be integers and let\\
$k=$gcd$(n_1,\ldots,n_m)=k$. Then there exists a number $T$ such that
for all integers $l$ with $kl\geq T$, we have $kl=t_1n_1+\ldots
+t_mn_m$ for some $t_1,\ldots,t_m\geq 0$.
\end{lemma}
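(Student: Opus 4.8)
The plan is to reduce to the coprime case and then exploit the cyclic group $\ZZ/m_1\ZZ$. First I would write $n_i = k m_i$ for each $i$, so that $\gcd(m_1,\ldots,m_m)=1$; since $kl = \sum_i t_i n_i$ holds if and only if $l = \sum_i t_i m_i$, it suffices to produce a bound $T'$ such that every integer $l \geq T'$ is a nonnegative integer combination of $m_1,\ldots,m_m$, and then take $T = kT'$. (I assume, as in the intended application to cycle lengths, that the $n_i$, hence the $m_i$, are positive.)

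Next I would work modulo $m_1$. Because $\gcd(m_1,\ldots,m_m)=1$, the residues of $m_2,\ldots,m_m$ generate the cyclic group $\ZZ/m_1\ZZ$; consequently, for each residue $r\in\{0,1,\ldots,m_1-1\}$ there exist nonnegative integers $b_2^{(r)},\ldots,b_m^{(r)}$ with $c_r := \sum_{i=2}^m b_i^{(r)} m_i \equiv r \pmod{m_1}$. Nonnegativity of the $b_i^{(r)}$ can be arranged, since replacing a coefficient by that coefficient plus $m_1$ leaves the residue class unchanged.

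Setting $C := \max_{0 \leq r < m_1} c_r$, I claim every integer $l \geq C$ is representable. Indeed, writing $r = l \bmod m_1$ we have $l - c_r \geq 0$ and $m_1 \mid (l - c_r)$, so $l - c_r = q m_1$ with $q \geq 0$, and hence $l = q m_1 + \sum_{i=2}^m b_i^{(r)} m_i$ is the desired nonnegative combination. Thus $T' = C$ works and $T = kC$ proves the lemma.

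The main obstacle is the selection of the residue representatives $c_r$ with nonnegative coefficients: one must justify both that the residues of $m_2,\ldots,m_m$ exhaust $\ZZ/m_1\ZZ$ (which uses the full coprimality $\gcd(m_1,\ldots,m_m)=1$, not merely $\gcd(m_2,\ldots,m_m)=1$) and that each residue class can be attained without negative coefficients. Everything else is bookkeeping. An alternative I would keep in reserve is induction on $m$, building on the two-generator case (every integer exceeding $ab-a-b$ is a nonnegative combination of coprime $a,b$), but the residue argument above handles all $m$ at once and seems cleaner.
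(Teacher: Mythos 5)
Your proof is correct. Note, however, that the paper does not actually prove this lemma: it is quoted verbatim from Brualdi--Ryser (Lemma 3.4.2) as a known number-theoretic fact, so there is no internal proof to compare against; your argument supplies a self-contained justification. The two points you flag as the ``main obstacle'' both check out: the subgroup of $\ZZ/m_1\ZZ$ generated by the residues of $m_2,\ldots,m_m$ is the subgroup generated by $\gcd(m_1,m_2,\ldots,m_m)=1$, hence is all of $\ZZ/m_1\ZZ$ (and it is indeed the full gcd including $m_1$ that matters here, as you say); and nonnegativity of the coefficients $b_i^{(r)}$ costs nothing because replacing $b_i^{(r)}$ by $b_i^{(r)}+m_1$ changes the sum by $m_1 m_i\equiv 0\pmod{m_1}$. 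The final bookkeeping ($l-c_r\geq 0$ and divisible by $m_1$, then $T=kC$) is sound, and the edge case $m=1$ degenerates harmlessly. Your explicit assumption that the $n_i$ are positive is necessary --- the statement as printed says only ``integers'' (and contains the typo ``$k=\gcd(\ldots)=k$''), but positivity is what the intended application to cycle lengths provides, so flagging it is the right call.
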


\subsection{Ultimate periodicity and immediate periodicity}

Theorem~\ref{t:Cycl1} also shows that the periodicity of 
sequences is crucial in max algebra. We will need the
following formal definitions.

A sequence of scalars/vectors/matrices $\{X_t\}_{t\geq 1}$ over $\Rp$ is called
{\em ultimately periodic with the period $\tau$ growth rate $r\in\Rp$} if
$\tau$ is the smallest integer $p\geq 1$ such that there exists $T$ with
$X_{t+p}=r^p X_t$ for all $t\geq T$.

A sequence of max cones $\{V_t\}_{t\geq 1}$ in $\Rpn$ is called
{\em ultimately periodic with the period $\sigma$} if
$\tau$ is the smallest integer $p\geq 1$ such that there exists $T$ with
$V_{t+p}=V_t$ for all $t\geq T$.

An ultimately periodic sequence of scalars/vectors/matrices $\{X_t\}_{t\geq 1}$ over $\Rp$
or max cones $\{V_t\}_{t\geq 1}$ in $\Rpn$ is called {\em immediately periodic} or just
{\em periodic}, if $T$ can be set to $1$ in the definitions above.

In terms of the ultimate periodicity, Theorem~\ref{t:Cycl1} can be
formulated as follows: for any irreducible nonnegative matrix $A\in\Rpnn$, the sequence of matrix powers $\{A^t\}_{t\geq 1}$ is ultimately periodic with the period equal to the cyclicity of critical graph, and with the growth rate equal to the m.c.g.m. of $A$.

Strictly speaking, the ultimate periodicity is also characterized by the smallest
possible parameter $T$ in the above definitions, which is called the {\bf defect} of sequence or its {\em periodicity threshold}.
The reader is referred to, e.g., Akian, Gaubert, Walsh~\cite{AGW-05} and Hartman, Arguelles~\cite{HA-99} for various bounds and algorithms estimating the value of $T(A)$ in Theorem~\ref{t:Cycl1}.

We also note that for a general reducible matrix $A\in\Rpnn$, not all the sequences $\{a_{ij}^{(t)}\}_{t\geq 1}$ for $i,j\in\{1,\ldots,n\}$, are ultimately
periodic in the sense of the definition given above. Such sequences can be decomposed into ultimately periodic subsequences with different growth rates, and the reader is referred to De Schutter~\cite{BdS}, Gavalec~\cite{Gav:04} and
Moln\'{a}rov\'{a}~\cite{Mol-05} for more details. However, we do not need such general
results and details in the present paper.

\subsection{Other idempotent algebras}

Max-plus algebra $\R_{\max}$ is defined over $\R\cup\{-\infty\}$, the set of real numbers
completed with the ``least'' element $-\infty$. It is equipped with operations
$a\oplus b:=\max(a,b)$ and $a\otimes b:=a+b$, where it is assumed that
$a\otimes -\infty=-\infty\otimes a= -\infty$ and $a\oplus -\infty=
-\infty\oplus a= a$. When applying the exponential mapping $x\mapsto e^x$, the max-plus algebra is seen to be isomorphic to the max(-times) algebra.

The max-plus algebra
and linear-algebraic problems over it remind us of the practical
applications of idempotent/tropical mathematics in scheduling and discrete
event systems~\cite{BCOQ, CG:79}, optimization and mathematical physics~\cite{KM:97}. In particular, for the Perron root of an irreducible matrix
$A\in\R_{\max}^{n\times n}$, we see applying the logarithmic transform
to~\eqref{mcgm} that
\begin{equation}
\label{mcm}
\rho(A)=\max_{k=1}^n \max_{i_1,\ldots,i_k}
\frac{a_{i_1i_2}+\ldots+a_{i_ki_1}}{k}.
\end{equation}
This is the maximum cycle mean of $A$, i.e., the solution to a linear programming problem (\cite{CG:79}, see also~\cite{But:10}).

In the present paper, we still choose to work with the max-times version
for its theoretical convenience.
The development of max-plus linear algebra is very similar to that of
max(-times) linear algebra.

However, we will touch upon $\ZZ_{\max}$: extended set of integers
$\ZZ\cup\{-\infty\}$ equipped with the same max-plus arithmetics. This is
a subsemiring of $\R_{\max}$, whose ``dual'' $\ZZ_{\min}$ was
treated in a work of Simon~\cite{Sim-94}, with
motivations in computer science and automata theory.

We will also touch upon the {\em max-min algebra}: the
interval $[0,1]$ equipped with ``addition'' $a\oplus b:=\max(a,b)$ and
``multiplication'' $a\otimes b=\min(a,b)$. For more information on the
linear algebra over this semiring, also known as {\em fuzzy linear algebra}, the
reader is referred to, e.g., Gavalec~\cite{Gav:04}. The only observation that we will
later use, is that unlike in max algebra, the ``multiplication'' here does not
create any new numbers. In particular, it implies that the sequence
$\{A^t\}_{t\geq 1}$ is ultimately periodic for each matrix $A$ with the
growth rate $1$.

\subsection{Frobenius normal form}

Every matrix $A=(a_{ij})\in \Rp^{n\times n}$ can be transformed by
simultaneous permutations of the rows and columns in almost linear
time to
a {\em Frobenius Normal Form} (FNF)~\cite{BP,BR} 
\begin{equation}
\left(
\begin{array}{cccc}
A_{11} & 0 & ... & 0 \\
A_{21} & A_{22} & ... & 0 \\
... & ... & A_{\mu\mu} & ... \\
A_{r1} & A_{r2} & ... & A_{rr}%
\end{array}%
\right) ,  \label{fnf}
\end{equation}%
where $A_{11},...,A_{rr}$ are irreducible square submatrices of $A$,
corresponding to the partition $N_1\cup\ldots\cup N_r=N$. Generally,
$A_{KL}$ denotes the submatrix of $A$ extracted from rows with indices in $K\subseteq N$
and columns with indices in $L\subseteq N$, and
$A_{\mu\nu}$ is a shortcut for $A_{N_{\mu}N_{\nu}}$.

Consider the graph associated with $A_{\mu\mu}$. It is the same as the graph
induced by $N_{\mu}$:
$
\digr(A_{\mu\mu})=(N_{\mu},E\cap (N_{\mu}\times N_{\mu})).
$
It follows that each of
the graphs $\digr(A_{\mu\mu})$ $(\mu=1,...,r)$ is strongly
connected and an arc from $N_{\mu}$ to $N_{\nu}$ in $\digr(A)$ may exist only
if $\mu\geq \nu.$

If $A$ is in the Frobenius Normal Form \eqref{fnf} then the {\em
reduced graph}, denoted $R(A)$, is the (di)graph whose nodes correspond to
$N_{\mu}$, for $\mu=1,\ldots,r$, and the set of arcs is
$
\{(\mu,\nu);(\exists k\in N_{\mu})(\exists \ell \in N_{\nu})a_{k\ell }>0\}).
$

The
nodes of $R(A)$ are {\em marked} by the corresponding max-algebraic eigenvalues
(Perron roots) denoted by $\rhomax_{\mu}:=\rhomax(A_{\mu\mu})$.
These nodes will be called {\em classes} of $A$.
We naturally attribute to a class $\mu$ also
the graph $\digr(A_{\mu\mu})$ with set of nodes $N_{\mu}$ and cyclicity $\sigma_{\mu}$.

We say that a class $\mu$ is {\em trivial} if $A_{\mu\mu}$ consists of a single diagonal
zero entry, i.e., when $\digr(A_{\mu\mu})$ is trivial.
Class $\mu$ {\em accesses} class
$\nu$, denoted $\mu\to\nu$, if there exists a $\mutonu$ path in $R(A)$.
A class is called {\em initial}, resp. {\em final}, if it is not accessed by, resp. if it does not
access, any other class. Node $i$ accesses class $\nu$, denoted by $i\to\nu$, if $i$ belongs to a
class $\mu$ such that $\mu\to\nu$.

Simultaneous permutations of the rows and columns of $A$ are
equivalent to calculating $P^{-1}AP,$ where $P$ is a
permutation matrix. Such transformations do not change the
eigenvalues, and the eigenvectors before and after such a
transformation only differ by the order of their components. Hence we will assume without loss of generality
that $A$ is in FNF~(\ref{fnf}).

\subsection{Reducible spectral theory}

A class $\nu$ of $A$ is called a {\em spectral class} of $A$ associated with eigenvalue $\rho\neq 0$, or sometimes
$(A,\rho)$-spectral class for short, if
\begin{equation}
\label{e:speclass}
\rhomax_{\nu}=\rho,\ \text{and}\ \mu\to\nu\ \text{implies}\ \rhomax_{\mu}\leq\rho_{\nu}.
\end{equation}
Note that there may be several spectral classes associated with the same eigenvalue.

Denote by  $\Lmax(A)$, the set of {\bf nonzero} eigenvalues of $A\in\Rpnn$.
 The following description is standard.

\begin{theorem}[\cite{But:10}~Th.~4.5.4, \cite{Gau:92}]
\label{t:spectrum}
Let $A\in\Rpnn$. Then\\
$\Lmax(A)=\{\rhomax_{\nu}\neq 0;\  \nu\ \text{is spectral}\}=
\{\rhomax_{\nu}\neq 0;\ \forall\mu,\ \mu\to\nu\ \text{implies}\ \rhomax_{\mu}\leq\rhomax_{\nu}\}$.
\end{theorem}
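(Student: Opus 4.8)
The second equality is immediate from the definition of a spectral class: a class $\nu$ is spectral with $\rhomax_\nu\neq 0$ precisely when $\rhomax_\nu\neq 0$ and $\mu\to\nu$ implies $\rhomax_\mu\leq\rhomax_\nu$. Thus the only substantial claim is the first equality $\Lmax(A)=\{\rhomax_\nu\neq 0;\ \nu\text{ is spectral}\}$, which I would establish by proving two inclusions: that every spectral class produces an eigenvalue, and that every nonzero eigenvalue arises from a spectral class.

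For the inclusion $\supseteq$, let $\nu$ be spectral with $\rho:=\rhomax_\nu\neq 0$, and set $T=N_\nu\cup\{i;\ i\to\nu\}$, the set of nodes that can reach $\nu$. First I would observe that no edge of $\digr(A)$ enters $T$ from outside: if $i\notin T$ had an edge to some $j\in T$, then $i\to\nu$ and so $i\in T$. Consequently the principal submatrix $A_{TT}$ governs the action of $A$ on any vector supported in $T$, and the zero-extension to $N$ of an eigenvector of $A_{TT}$ is automatically an eigenvector of $A$. Next, the spectral condition gives $\rhomax_\mu\leq\rho$ for every class $\mu\subseteq T$ (each such $\mu$ accesses $\nu$), so $\rhomax(A_{TT})=\rho$, the maximum being attained on a cycle inside $N_\nu$. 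Normalising $B:=A_{TT}/\rho$ we have $\rhomax(B)=1$, hence $B^*$ converges, and for a critical node $c\in N_\nu$ of $B$ the column $B^*_{\cdot c}$ is an eigenvector of $B$ for the eigenvalue $1$ (this is the standard Kleene-star description of eigenvectors recorded in Theorem~\ref{t:FVmaxalg}). Its zero-extension is then a nonzero eigenvector of $A$ for $\rho$, so $\rho\in\Lmax(A)$.

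For the converse inclusion $\subseteq$, take $\rho\in\Lmax(A)$ with eigenvector $x\neq 0$ and let $W=\supp(x)$. The plan rests on two observations about the classes meeting $W$. First, for any class $\nu$ with $N_\nu\cap W\neq\emptyset$ the eigenvalue equation restricted to $N_\nu$ yields $A_{\nu\nu}x_{N_\nu}\leq\rho x_{N_\nu}$; since $A_{\nu\nu}$ is irreducible and $x_{N_\nu}\neq 0$, a forward-closedness argument (a zero coordinate would force, via strong connectivity, all coordinates on $N_\nu$ to vanish) shows $x_{N_\nu}>0$ everywhere, and estimating the weight of any cycle in $N_\nu$ against this positive subeigenvector gives $\rhomax_\nu\leq\rho$. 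Second, from each node $i\in W$ one can follow an edge $(i,j)$ on which the maximum defining $(Ax)_i=\rho x_i$ is attained, necessarily with $j\in W$; iterating yields an infinite forward walk inside $W$, hence a cycle $C\subseteq W$. Saturation turns the defining inequalities into equalities, so $w(C)^{1/l(C)}=\rho$, and the class $\nu_0$ containing $C$ satisfies $\rhomax_{\nu_0}\geq\rho$, whence $\rhomax_{\nu_0}=\rho$ by the first observation.

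It remains to check that $\nu_0$ is spectral, and this is the step I expect to be the crux. Here I would use the backward propagation of positivity: since $\rho>0$ and $x_{N_{\nu_0}}>0$, any node admitting a path to $N_{\nu_0}$ must itself carry a positive coordinate, because $x_i\geq\rho^{-1}a_{ij}x_j$ along each edge. Therefore every class $\mu$ with $\mu\to\nu_0$ meets $W$, so by the first observation $\rhomax_\mu\leq\rho=\rhomax_{\nu_0}$; this is exactly the spectrality of $\nu_0$, completing the inclusion. The main obstacle throughout is bookkeeping the support of $x$ and the access relation in the right directions—the eigenvector is seeded on the downstream spectral class and its positivity spreads upstream to all accessing classes—so that the local subeigenvector bound $\rhomax_\mu\leq\rho$ can be applied exactly to the classes required by the definition of spectrality.
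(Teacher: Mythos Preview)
Your proof is correct. Note, however, that the paper does not actually prove this theorem: it is stated as a known result with references to Butkovi\v{c}'s monograph (Theorem~4.5.4) and Gaubert's thesis, and no argument is supplied in the paper itself. Your argument is essentially the standard one found in those sources: for $\supseteq$, build an eigenvector for a spectral class $\nu$ from a Kleene-star column of the principal submatrix on the set of nodes accessing $\nu$; for $\subseteq$, follow saturating edges from the support of a given eigenvector to find a cycle of geometric mean exactly $\rho$, and then use backward propagation of positivity together with the subeigenvector bound $\rhomax_\mu\leq\rho$ to verify that the class containing this cycle is spectral. The only place a reader might want one more sentence is your ``forward-closedness'' step showing $x_{N_\nu}>0$: this works because $x_j=0$ forces $(Ax)_j=0$, hence no edge of $\digr(A_{\nu\nu})$ leaves the zero set towards the positive set, which contradicts strong connectivity unless one of the two sets is empty.
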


For each $\rho\in\Lambda(A)$ define 
\begin{equation}
\label{amrho}
\begin{split}
&\amrho:=\rho^{-1}
\begin{pmatrix}
0 & 0\\
0 & \amrhomrho
\end{pmatrix},\ \text{where}\\
& \mrho:=\{i;\ i\to\nu,\; \nu\ \text{is $(A,\rho)$-spectral}\}\,\enspace .
\end{split}
\end{equation}

The next proposition
allows us to reduce a general eigencone to the
case of a principal eigencone. We will assume an appropriate
ordering of indices (i.e., nodes of the corresponding graph). Such assumptions will sometimes be
made also in the sequel, without special mention of them.


\begin{proposition}[\cite{But:10,Gau:92}]
\label{p:vamrho}
For $A\in\Rpnn$ and each $\rho\in\Lambda(A)$, we have
$V(A,\rho)=V(\amrho,1)$, where $1$ is the principal eigenvalue
of $\amrho$.
\end{proposition}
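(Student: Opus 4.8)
The plan is to prove the two inclusions $\vmax(A,\rho)\subseteq\vmax(\amrho,1)$ and $\vmax(\amrho,1)\subseteq\vmax(A,\rho)$ by analysing the supports of the eigenvectors involved together with the block structure of $\amrho$. Throughout I would write $x=(x_{\overline{\mrho}},x_{\mrho})$ in the block form associated with the partition $N=\overline{\mrho}\cup\mrho$, where $\overline{\mrho}=N\setminus\mrho$, so that by definition \eqref{amrho} one has $\amrho\otimes x=(0,\rho^{-1}\amrhomrho\otimes x_{\mrho})$.

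First I would record two elementary structural facts. (i) For the index set $\mrho$ there are no edges of $\digr(A)$ entering $\mrho$ from outside: if $i\notin\mrho$ had an edge to some $l\in\mrho$, then since $l\to\nu$ for some $(A,\rho)$-spectral class $\nu$, transitivity of access would give $i\to\nu$, forcing $i\in\mrho$, a contradiction; hence $a_{il}=0$ whenever $i\in\overline{\mrho}$ and $l\in\mrho$. (ii) If $x\in\vmax(A,\rho)$ and $x_i=0$, then the $i$th eigenequation $0=\rho x_i=\bigoplus_l a_{il}x_l$ forces $a_{il}x_l=0$ for all $l$; thus $\supp(x)$ receives no edges from its complement, and (being a union of classes, since positive coordinates propagate within a strongly connected component) it is closed under the \emph{accessed-by} relation: if a class $\mu\subseteq\supp(x)$ is accessed by $\lambda$, then $\lambda\subseteq\supp(x)$.

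For the inclusion $\vmax(A,\rho)\subseteq\vmax(\amrho,1)$ the key step is to show $\supp(x)\subseteq\mrho$ for every $x\in\vmax(A,\rho)$. Starting from any $i\in\supp(x)$ and repeatedly choosing an index attaining the maximum in $\rho x_i=\bigoplus_l a_{il}x_l$, one builds a walk inside $\supp(x)$ which, by finiteness, closes up into a cycle $C$; multiplying the tight equations around $C$ telescopes the $x$-coordinates and yields $w(C)^{1/l(C)}=\rho$. The cycle $C$ lies in a single class $\mu'\subseteq\supp(x)$, so $\rho_{\mu'}\ge\rho$, while a parallel inequality along any cycle in a support class gives $\rho_\mu\le\rho$ for every class $\mu\subseteq\supp(x)$; hence $\rho_{\mu'}=\rho$. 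By fact (ii) every class accessing $\mu'$ also lies in $\supp(x)$ and therefore has Perron root $\le\rho=\rho_{\mu'}$, which is exactly condition \eqref{e:speclass} making $\mu'$ spectral; thus $i\to\mu'$ gives $i\in\mrho$. Once $\supp(x)\subseteq\mrho$ is established, the eigenequation on rows $i\in\mrho$ reads $\rho x_i=\bigoplus_{l\in\mrho}a_{il}x_l=(\amrhomrho\otimes x_{\mrho})_i$, i.e. $\amrhomrho\otimes x_{\mrho}=\rho x_{\mrho}$, and since $x_{\overline{\mrho}}=0$ this is precisely $\amrho\otimes x=x$. The reverse inclusion is then short: if $\amrho\otimes x=x$ with $x\neq 0$, the zero block forces $x_{\overline{\mrho}}=0$ and $\amrhomrho\otimes x_{\mrho}=\rho x_{\mrho}$; for $i\in\mrho$ we recover $(A\otimes x)_i=\bigoplus_{l\in\mrho}a_{il}x_l=\rho x_i$, while for $i\in\overline{\mrho}$ fact (i) gives $a_{il}=0$ for all $l\in\mrho$, whence $(A\otimes x)_i=0=\rho x_i$, so $A\otimes x=\rho x$. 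Finally, $1$ is the principal eigenvalue of $\amrho$ because every class in $\mrho$ accesses an $(A,\rho)$-spectral class and so, by \eqref{e:speclass}, has Perron root at most $\rho$, with equality on the spectral classes; hence $\rho(\amrhomrho)=\rho$ and $\rho(\amrho)=1$.

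I expect the main obstacle to be the support identification $\supp(x)\subseteq\mrho$: it is there that one must combine the purely graph-theoretic closure property (ii) with the cycle-geometric-mean computation in order to certify that the classes carried by the support are genuinely spectral in the sense of \eqref{e:speclass}, rather than merely hosting a cycle of mean $\rho$. The remaining steps are block bookkeeping against the definition \eqref{amrho} of $\amrho$.
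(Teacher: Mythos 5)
Your proof is correct. Note that the paper itself gives no proof of this proposition --- it is quoted from Butkovi\v{c}'s book and Gaubert's thesis --- and your argument is essentially the standard one found there: the decisive step, showing $\supp(x)\subseteq\mrho$ for $x\in \vmax(A,\rho)$ by following saturated edges of the eigenequation until they close into a cycle of geometric mean exactly $\rho$, and then certifying that the class carrying that cycle is spectral via the backward-closure of $\supp(x)$, is precisely the classical ``saturation digraph'' argument of reducible max-algebraic spectral theory. The remaining block bookkeeping (fact (i) giving $A_{\overline{\mrho}\,\mrho}=0$, and $\rho(\amrho)=1$ because every class inside $\mrho$ has Perron root at most $\rho$ with equality attained on spectral classes) is carried out correctly.
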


Using Proposition~\ref{p:vamrho}, we
define the {\em critical graph associated with} $\rho\in\Lmax(A)$ as the critical graph of
$\amrho$. The strongly connected components of critical graphs
associated with $\rho$, for all $\rho\in\Lmax(A)$, will be called the {\em critical components of $A$}.
These components will be (similarly as the
classes of FNF) denoted by $\Tilde{\mu}$, with the node set $N_{\Tilde{\mu}}$.

We now describe the principal eigencones in max algebra. By means of Proposition~\ref{p:vamrho}, this description can be obviously extended
to the case of general eigencones. Here, the vectors $x_{\Tilde{\mu}}$ are
{\bf full-size}. Recall that support of $x\in\Rpn$, denoted by $\supp x$, is the set of indices $i$ with $x_i\neq 0$.

\begin{theorem}[\cite{But:10}~Th.~4.3.5, \cite{BSS-core1}~Th.~2.11, \cite{Gau:92} ]
\label{t:FVmaxalg}
Let $A\in\Rpnn$ have $\rhomax(A)=1$.
\begin{itemize}
\item[(i)] Each component $\maxmu$ of $\crit(A)$ corresponds to
an eigenvector $x_{\maxmu}$ defined as one of the columns $A^*_{\cdot i}$
with $i\in N_{\maxmu}$, all columns with $i\in N_{\maxmu}$ being multiples of each other.
\item[(i')] Each component $\maxmu$ of $\crit(A)$ is contained in a (spectral) class $\mu$
with $\rhomax_{\mu}=1$, and the support of each $x_{\maxmu}$ of (i) consists of
all indices in all classes that have access to $\mu$.

\item[(ii)] $\vmax(A,1)$ is generated by $x_{\maxmu}$ of (i), for $\maxmu$ ranging over all
components of $\crit(A)$.
\item[(iii)] $x_{\maxmu}$ of (i) are extremals in $\vmax(A,1)$. (Moreover, $x_{\maxmu}$ are
strongly linearly independent in the sense of~\cite{But-03}.)
\end{itemize}
\end{theorem}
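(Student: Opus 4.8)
The plan is to prove Theorem~\ref{t:FVmaxalg} by reducing to the explicit structure of the Kleene star $A^*$ when $\rhomax(A)=1$. Since $\rhomax(A)=1$, the series $A^*=I\oplus A\oplus A^2\oplus\ldots$ converges and equals $I\oplus A\oplus\ldots\oplus A^{n-1}$, and for $i\neq j$ the entry $a^*_{ij}$ is the greatest weight of an $\itoj$ path. The central algebraic fact I would establish first is the \emph{eigenvector property of critical columns}: for a critical index $i\in N_c$, the column $A^*_{\cdot i}$ satisfies $A\otimes A^*_{\cdot i}=A^*_{\cdot i}$. This follows from $A\otimes A^*=A^*\ominus I$ combined with the observation that $a^*_{ii}=1$ for every critical node $i$ (because $i$ lies on a cycle of weight $1$, so the best closed $\itoi$ path has weight exactly $1=\rhomax(A)$); hence $(A\otimes A^*)_{\cdot i}=A^*_{\cdot i}$. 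This gives part (i) once I verify that two critical indices $i,j$ in the same strongly connected component $\maxmu$ yield proportional columns, which comes from $a^*_{ij}\,a^*_{ji}=1$ along the critical cycle through both nodes.

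\textbf{Support and class structure.} For (i') I would analyze $\supp A^*_{\cdot i}$ for $i\in N_{\maxmu}$. Index $k$ lies in the support iff $a^*_{ki}\neq 0$, i.e.\ iff there is a path from $k$ to $i$ in $\digr(A)$, which is exactly the statement that the class of $k$ accesses the class $\mu$ containing the component $\maxmu$. The fact that $\maxmu$ sits inside a spectral class $\mu$ with $\rhomax_\mu=1$ follows from the defining property~\eqref{e:speclass}: any class $\lambda$ accessing $\mu$ must satisfy $\rhomax_\lambda\leq 1$, for otherwise a cycle of geometric mean exceeding $1$ would be reachable, contradicting $\rhomax(A)=1$. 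I would also note that $\rhomax_\mu=1$ itself, since $\maxmu$ is a critical component and its cycles have weight $1$.

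\textbf{Generation and extremality.} For (ii), the inclusion $\spanmax\{x_{\maxmu}\}\subseteq\vmax(A,1)$ is immediate from (i) and the max-cone axioms. For the reverse inclusion I would take an arbitrary eigenvector $x$ with $A\otimes x=x$, deduce $A^*\otimes x=x$ (iterating the eigen-equation and using $\rhomax=1$), and then show $x=\bigoplus_{k}x_k A^*_{\cdot k}$ can be reduced to a max-combination over critical columns only, using that for a non-critical index $k$ the column $A^*_{\cdot k}$ is dominated by a max-combination of critical ones (every path eventually meets the critical graph when $\rhomax=1$). Part (iii), extremality and strong linear independence, would follow from the block structure of the critical columns after normalization: distinct components $\maxmu$ have columns whose supports intersect the critical graph in disjoint node sets with a value of $1$ on their own component and strictly less than $1$ elsewhere, so no $x_{\maxmu}$ can be written as a max-combination of the others.

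\textbf{The main obstacle} I expect is the reverse inclusion in (ii): showing that every principal eigenvector is generated by the \emph{critical} columns rather than by all columns of $A^*$. The clean way through is to prove that for any eigenvector $x$ and any index $k\in\supp x$, there is a critical index $i$ on an optimal path realizing the value at $k$, so that $x_k A^*_{\cdot k}\leq x_i A^*_{\cdot i}$ componentwise with $i\in N_c$; this localization of the support onto the critical graph is the technical heart and is where the cyclicity/maximum-cycle-mean structure from Theorem~\ref{t:Cycl1} does the real work.
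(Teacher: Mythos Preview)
The paper does not give its own proof of Theorem~\ref{t:FVmaxalg}; the result is quoted from \cite{But:10,BSS-core1,Gau:92} as background. So there is no in-paper argument to compare yours against, and the question reduces to whether your sketch stands on its own.

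Your outline is essentially sound, and the route via $A^*$ is the standard one in the cited references. A few points deserve tightening. First, the expression ``$A\otimes A^*=A^*\ominus I$'' is only heuristic in max algebra (there is no subtraction); what you actually use is that $(A\otimes A^*)_{ji}=a^*_{ji}$ for $j\neq i$, while $(A\otimes A^*)_{ii}$ equals the largest cycle mean through $i$, which is $1$ exactly when $i$ is critical. Second, the parenthetical ``every path eventually meets the critical graph when $\rhomax=1$'' is false as written: a non-critical column $A^*_{\cdot k}$ is \emph{not} in general dominated by critical columns (indeed it is not even an eigenvector, since $(A\otimes A^*_{\cdot k})_k<1=a^*_{kk}$). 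What is true, and what your final paragraph states correctly, is that for an eigenvector $x$ and $k\in\supp x$, following argmax edges from $k$ one reaches a critical cycle, yielding a critical $i$ with $x_k=a^*_{ki}x_i$; then $x_k\,a^*_{jk}\le x_i\,a^*_{ji}$ by path concatenation, so the \emph{weighted} term $x_k A^*_{\cdot k}$ is dominated by $x_i A^*_{\cdot i}$. Third, the Cyclicity Theorem~\ref{t:Cycl1} is not needed for this step; the argmax-path argument is elementary and self-contained. With these corrections, your plan for (i), (i'), (ii) is complete, and your extremality argument for (iii) via $a^*_{ij}a^*_{ji}<1$ for $i,j$ in distinct critical components is the right idea.
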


\subsection{Access relations in matrix powers}

In~\cite{BSS-core1} it was demonstrated that access relations and
spectral classes of all matrix powers are essentially the same, and
that the case of an arbitrary eigenvalue reduces to the case of the
principal eigenvalue. We start from the following basic result.

\begin{lemma}[\cite{BSS-core1}, Lemma 5.1]
\label{l:sameperron}
Let $A$ be irreducible with the (unique) eigenvalue $\rho$,
let $\digr(A)$ have cyclicity $\sigma$ and $t$ be a positive integer.
Then, $A^t$ is a direct sum of gcd$(t,\sigma)$ irreducible blocks
with eigenvalues $\rho^t$, and $A^t$ does not have eigenvalues other than $\rho^t$.
The cyclicity of
each block is $\sigma/$gcd$(t,\sigma)$. In particular, all
blocks of $A^{\sigma}$ are primitive.
\end{lemma}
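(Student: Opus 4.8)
The plan is to reduce everything to the combinatorics of powers of the strongly connected digraph $\digr(A)$ and then read off the spectral data from the irreducibility of each block. Since $A$ is irreducible, $\digr(A)$ is strongly connected, and having cyclicity $\sigma$ it admits the standard cyclic (imprimitivity) decomposition $N=C_0\cup\dots\cup C_{\sigma-1}$ in which every edge runs from $C_r$ to $C_{(r+1)\bmod\sigma}$ (Brualdi--Ryser~\cite{BR}); set $d=\gcd(t,\sigma)$ and write $t=ds$ with $\gcd(s,\sigma/d)=1$. By the path interpretation of powers, $(i,j)$ is an edge of $\digr(A^t)$ precisely when there is an $\itoj$ path of length exactly $t$ in $\digr(A)$, and such a path advances the cyclic-class index by $t$; hence every edge of $\digr(A^t)$ runs from $C_r$ to $C_{(r+t)\bmod\sigma}$ and so keeps the index in a fixed residue class modulo $d$. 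Consequently there are no edges between the node sets $B_{r_0}=\bigcup_{r\equiv r_0\,(\mathrm{mod}\ d)}C_r$ for $r_0=0,\dots,d-1$, which already shows that, after permutation, $A^t$ is a direct sum of the $d=\gcd(t,\sigma)$ blocks supported on $B_0,\dots,B_{d-1}$.

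Next I would show each $B_{r_0}$ is strongly connected in $\digr(A^t)$ and has cyclicity $\sigma/d$. Two nodes $i\in C_a$ and $j\in C_b$ share a block iff $a\equiv b\pmod d$; the lengths of $\itoj$ paths in $\digr(A)$ are, for all large values, exactly the integers $\equiv b-a\pmod\sigma$ (here Lemma~\ref{l:schur} is applied to the cycle lengths available along these paths), and such a length can be chosen divisible by $t$ exactly because $d\mid(b-a)$, yielding an $\itoj$ path in $\digr(A^t)$; this gives strong connectivity of the block. For the cyclicity, a length-$k$ cycle of the subgraph induced on $B_{r_0}$ lifts to a closed walk of length $kt$ in $\digr(A)$, whose length is divisible by $\sigma$; as $\gcd(s,\sigma/d)=1$ this forces $\sigma/d\mid k$, so the cyclicity is a multiple of $\sigma/d$. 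Conversely, Lemma~\ref{l:schur} makes every large multiple of $\sigma$ a closed-walk length at a fixed $u\in B_{r_0}$; retaining those divisible by $t$ produces closed walks in $\digr(A^t)$ of lengths $(\sigma/d)q$ and $(\sigma/d)(q+1)$ for large $q$, whose gcd is $\sigma/d$. Since the cyclicity is the gcd of all closed-walk lengths, it divides $\sigma/d$, and with the previous divisibility it equals $\sigma/d$.

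Each block is irreducible, so its unique eigenvalue equals its maximum cycle geometric mean $\rho(B_{r_0})$. The bound $\rho(B_{r_0})\le\rho^t$ is immediate, since a length-$k$ cycle of $\digr(A^t)$ is a length-$kt$ closed walk of $\digr(A)$ that decomposes into elementary cycles of geometric mean $\le\rho$, so its weight is $\le\rho^{kt}$. For the reverse bound I would fix a critical cycle $C^*$ of $\digr(A)$ (of weight $\rho^{l(C^*)}$), join a fixed $u\in B_{r_0}$ to a node of $C^*$ and back by paths $W_1,W_2$, and look at the closed walks $W_1(C^*)^mW_2$; choosing the lengths of $W_1,W_2$ and the winding number $m$ so that the total length is divisible by $t$ (feasible by the same divisibility bookkeeping), these are cycles in $B_{r_0}$ whose $A^t$-geometric means tend to $\rho^t$ from below as $m\to\infty$. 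All such values being $\le\rho(B_{r_0})$, we conclude $\rho(B_{r_0})=\rho^t$. Finally, since $A^t$ is a direct sum of these blocks its eigenvalue set is the union of theirs, namely $\{\rho^t\}$; and for $t=\sigma$ we have $d=\sigma$, so $\sigma/d=1$ and every block is primitive.

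The step I expect to be the main obstacle is the lower bound $\rho(B_{r_0})\ge\rho^t$ (and, in the same spirit, the exact value of the cyclicity): it must be established for every block, including those containing no critical nodes, so one cannot simply transport a critical cycle into the block. The delicate point is to produce closed walks that stay inside a single block $B_{r_0}$, have total length divisible by $t$, and have geometric mean arbitrarily close to $\rho^t$ simultaneously; controlling these three constraints at once is exactly where the arithmetic of $d=\gcd(t,\sigma)$, the factorization $t=ds$ with $\gcd(s,\sigma/d)=1$, and Lemma~\ref{l:schur} are needed.
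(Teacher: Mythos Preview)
The paper does not give its own proof of this lemma; it is quoted verbatim as \cite{BSS-core1}, Lemma~5.1, so there is nothing here to compare your argument against. Your proof is the standard one based on the cyclic (imprimitivity) decomposition of a strongly connected digraph, and it is correct in all parts: the block decomposition by residue classes modulo $d=\gcd(t,\sigma)$, the strong connectivity and cyclicity $\sigma/d$ of each block via Lemma~\ref{l:schur}, and the identification of the unique eigenvalue of each irreducible block as $\rho^t$.

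One remark on the step you flag as delicate. Your limiting construction $W_1(C^*)^mW_2$ is fine (a closed walk whose geometric mean exceeds a value forces, by cycle decomposition, an elementary cycle whose mean does too, and there are only finitely many of those), but there is a shorter route that sidesteps the arithmetic bookkeeping entirely: take any eigenvector $x$ with $A\otimes x=\rho x$. Irreducibility gives $\supp(x)=N$, so the restriction $x|_{B_{r_0}}$ is nonzero, and since $A^t$ is block-diagonal on the $B_{r_0}$ we get $(A^t)_{B_{r_0}B_{r_0}}\otimes x|_{B_{r_0}}=\rho^t x|_{B_{r_0}}$. Each block being irreducible, its unique eigenvalue is therefore $\rho^t$, with no need to approximate from below or to treat separately blocks containing no critical nodes.
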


Recall that each class $\mu$ of $A$ 
corresponds to an
irreducible submatrix $A_{\mu\mu}$. It is easy to see that
$(A^t)_{\mu\mu}=(A_{\mu\mu})^t$ for any positive integer $t$. Suppose that the
cyclicity of $\digr(A_{\mu\mu})$ is $\sigma$. Applying Lemma~\ref{l:sameperron}
to $A_{\mu\mu}$ we see that $\mu$ gives rise to gcd$(t,\sigma)$ classes
in $A^t$, which are said to be {\em derived} from their common {\em ancestor} $\mu$.
The classes of $A^t$ and $A^l$ derived from the common ancestor will be called {\em related}.
Note that this is an equivalence relation
on the set of classes of all powers of $A$.

It can be checked that the same notions can be defined for the
components of critical graphs, see~\cite{BSS-core1}.

Let us recall the following results on the similarity of access relations in matrix powers.

\begin{lemma}[\cite{BSS-core1}, Lemma 5.3]
\label{l:sameaccess}
For all $t,l\geq 1$ and $\rho>0$, an index $i\in\{1,\ldots,n\}$ accesses (resp.  is accessed by) a class with Perron root $\rho^t$ in $A^t$ if and only if
it accesses (resp. is accessed by) a related class with Perron root $\rho^l$ in $A^l$.
\end{lemma}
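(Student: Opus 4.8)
The plan is to prove both equivalences by passing through the fixed matrix $A$. Fix a class $\mu$ of $A$ with $\rhomax_{\mu}=\rho>0$; I will show that for every $t\geq 1$, the index $i$ accesses a class of $A^t$ derived from $\mu$ if and only if $i\to\mu$ in $A$, a condition not involving $t$. Granting this, the biconditional of the lemma follows at once. Indeed, every class of $A^t$ with Perron root $\rho^t$ is derived from some ancestor $\mu$ of $A$: the strongly connected components of $\digr(A^t)$ refine those of $\digr(A)$, and by Lemma~\ref{l:sameperron} a class $N_{\mu}$ of cyclicity $\sigma_{\mu}$ splits into exactly $\gcd(t,\sigma_{\mu})$ classes of $A^t$, each with Perron root $\rhomax_{\mu}^t$; and $\rhomax_{\mu}^t=\rho^t$ with $\rho>0$ forces $\rhomax_{\mu}=\rho$. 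Thus ``$i$ accesses a class with Perron root $\rho^t$ in $A^t$'' is equivalent to ``$i\to\mu$ in $A$ for some $\mu$ with $\rhomax_{\mu}=\rho$'', which is independent of $t$; applying this for $l$ as well yields the equivalence, and the classes accessed in $A^t$ and $A^l$ are related because both are derived from the same $\mu$. The dual ``is accessed by'' statement then follows by applying the result to $A^{T}$, whose graph reverses all access but has the same classes, cyclicities and Perron roots.

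To prove the key equivalence I would first rewrite access in powers as a length condition. A nonzero entry of $A^t$ encodes a walk of length exactly $t$ in $\digr(A)$, so $i$ accesses a node $j$ in $\digr(A^t)$ if and only if there is a walk from $i$ to $j$ in $\digr(A)$ of length a positive multiple of $t$; hence $i$ accesses a class derived from $\mu$ in $A^t$ exactly when some such walk ends inside $N_{\mu}$. The forward implication is then immediate: a walk of length a multiple of $t$ reaching $N_{\mu}$ is in particular a walk reaching $N_{\mu}$, so $i\to\mu$ in $A$.

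For the converse, suppose $i\to\mu$, so there is a path from $i$ to some $j_0\in N_{\mu}$ of length $s$. Since $\rho>0$, the strongly connected graph $\digr(A_{\mu\mu})$ contains a cycle, and the crux is the claim that from $j_0$ there exist walks inside $N_{\mu}$, ending at some node of $N_{\mu}$, of every sufficiently large length. Choosing such a walk of length $\ell$ with $\ell\equiv -s\pmod t$ and $\ell$ large, and concatenating it with the path from $i$ to $j_0$, produces a walk from $i$ into $N_{\mu}$ of length $s+\ell$, a positive multiple of $t$; this is exactly the walk witnessing that $i$ accesses a class derived from $\mu$ in $A^t$.

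The claim is the one nontrivial ingredient and is where I expect the main obstacle, since it requires using the cyclicity of $\digr(A_{\mu\mu})$. By Lemma~\ref{l:sameperron} the matrix $A_{\mu\mu}^{\sigma_{\mu}}$ is a direct sum of $\sigma_{\mu}$ primitive blocks, so between any two nodes of a single block there are walks of all sufficiently large lengths divisible by $\sigma_{\mu}$ (equivalently, one applies Lemma~\ref{l:schur} to the elementary cycle lengths through $j_0$). Since $N_{\mu}$ is strongly connected and nontrivial, from $j_0$ one can first reach any prescribed node $v\in N_{\mu}$ and then inflate the total length by looping through a cycle at $v$; letting $v$ range over all layers of $N_{\mu}$ makes every residue modulo $\sigma_{\mu}$, and hence every sufficiently large length, attainable, in particular one $\equiv -s\pmod t$. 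Keeping all walks inside $N_{\mu}$ is automatic, as only edges of $\digr(A_{\mu\mu})$ are used.
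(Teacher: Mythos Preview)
The paper does not prove this lemma at all: it is quoted verbatim from \cite{BSS-core1}, Lemma~5.3, with no argument supplied here. So there is nothing in this paper to compare your proof against.

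That said, your proof is correct and self-contained. The reduction to the single equivalence ``$i$ accesses a class of $A^t$ derived from $\mu$ $\Leftrightarrow$ $i\to\mu$ in $A$'' is the natural one, and the translation of access in $\digr(A^t)$ into existence of walks in $\digr(A)$ whose length is a positive multiple of $t$ is exactly right. The key combinatorial point---that from any node $j_0$ of a nontrivial strongly connected class $N_\mu$ there are walks inside $N_\mu$ of \emph{every} sufficiently large length, because the cyclicity classes of $\digr(A_{\mu\mu})$ realise each residue modulo $\sigma_\mu$ and Lemma~\ref{l:schur} (or primitivity of the blocks of $A_{\mu\mu}^{\sigma_\mu}$ from Lemma~\ref{l:sameperron}) fills in all large lengths within each residue---is correctly identified and argued, if somewhat tersely. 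Your use of $(A^t)_{\mu\mu}=(A_{\mu\mu})^t$ and Lemma~\ref{l:sameperron} to see that every class of $A^t$ with Perron root $\rho^t$ is derived from an ancestor $\mu$ with $\rho_\mu=\rho$ is also sound (the equation $\rho_\mu^t=\rho^t$ with $\rho>0$ indeed forces $\rho_\mu=\rho$ over the nonnegative reals). The passage to the ``is accessed by'' version via $A^T$ is valid.

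One small stylistic remark: in the final paragraph, ``looping through a cycle at $v$'' only gives lengths in one residue class modulo $\sigma_\mu$; what actually realises all residues is varying the endpoint over the cyclicity layers, which you do say but could state more crisply. This is a matter of exposition, not a gap.
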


A similar result holds for the strongly connected components of
the critical graphs of matrix powers, see~\cite{BSS-core1}, Theorem 3.3.
We will only need  the following observation,
which we formulate for the {\em critical matrix} $A_{C}=(a_C)_{ij}$ defined by
\begin{equation}
(a_C)_{ij}=
\begin{cases}
1, & \text{if $(i,j)\in E_c$},\\
0, & \text{otherwise}.
\end{cases}
\end{equation}
\begin{lemma}
\label{l:critpowers}
Let $A\in\Rpnn$. Then $(A_C)^t=(A^t)_C$.
\end{lemma}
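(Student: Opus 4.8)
The plan is to reduce to the normalized case $\rho(A)=1$ and then to prove the equivalent combinatorial statement that, for each ordered pair of nodes $(i,j)$, the entries $((A_C)^t)_{ij}$ and $((A^t)_C)_{ij}$ are simultaneously $1$ or simultaneously $0$. First I would dispose of the degenerate case: if $\digr(A)$ is acyclic then $\rho(A)=0$, there are no critical edges, and $\digr(A^t)$ is acyclic as well, so both sides are the zero matrix. Otherwise $\rho(A)>0$, and since replacing $A$ by $A/\rho(A)$ alters neither the matrix $A_C$ (it depends only on $E_c$, which is rescaling-invariant) nor the critical edge set of $A^t$ (rescaling $A^t$ by $\rho^{-t}$ multiplies every cycle weight by the same power and so preserves which cycles are critical), I may assume $\rho(A)=1$. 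In this case all cycles of $\digr(A)$ have weight at most $1$ and the critical cycles are exactly those of weight $1$.

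I would record the two facts used throughout. Fact (a): for any nonnegative matrix with Perron root $1$, a closed walk has weight $1$ if and only if all of its edges are critical; in particular the critical edges are precisely those lying on a weight-$1$ closed walk. One direction follows by extracting elementary cycles from a closed walk, each of weight $\le 1$, and noting that a product of positive numbers $\le 1$ equals $1$ only if every factor is $1$; the converse is the telescoping claim discussed below. Fact (b): $\rho(A^t)=1$, proved by expanding each $A^t$-edge of a cycle into a length-$t$ path in $\digr(A)$ and decomposing the resulting closed walk. Because $A_C$ is a $0/1$ matrix, $((A_C)^t)_{ij}=1$ exactly when there is a length-$t$ path from $i$ to $j$ in $\digr(A)$ using only critical edges, while by fact (a) applied to $A^t$, $((A^t)_C)_{ij}=1$ exactly when $(i,j)$ lies on a weight-$1$ cycle of $\digr(A^t)$; it therefore suffices to show these two conditions are equivalent.

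For the implication from $A^t$ to $A$, I would take a weight-$1$ cycle of $\digr(A^t)$ through the edge $(i,j)$, replace each $A^t$-edge by a maximum-weight length-$t$ path in $\digr(A)$, and obtain a closed walk $W$ in $\digr(A)$ of weight $1$ containing a length-$t$ segment from $i$ to $j$; by fact (a) every edge of $W$ is critical, so that segment is the desired critical path. For the converse, given a length-$t$ critical path $P\colon i\to j$, all of its edges lie in a single strongly connected component of $\crit(A)$ (each critical edge lies on a critical cycle, hence inside one component, and consecutive edges of $P$ share a node), so $i$ and $j$ lie in that component. Choosing any critical return path $Q\colon j\to i$ inside it and setting $R=PQ$, the closed walk $W=R^{\,t}$ has length divisible by $t$, consists of critical edges, and has weight $1$ by fact (a). Splitting $W$ into consecutive length-$t$ segments, the first being $P$, yields a closed walk in $\digr(A^t)$ through $(i,j)$ whose weight is at least the product of the segment weights, namely $w(W)=1$, and at most $\rho(A^t)=1$ by fact (b); hence it is a weight-$1$ cycle and $(i,j)$ is critical in $A^t$.

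The main obstacle is the nontrivial half of fact (a): every cycle built only from critical edges has weight exactly $1$, not merely $\le 1$. This is precisely what makes both the divisibility trick $W=R^{\,t}$ and the final weight estimate close up. I would settle it by a potential/telescoping argument restricted to one critical component, using a column $A^*_{\cdot r}$ with $r$ a critical node, which by Theorem~\ref{t:FVmaxalg} is a max eigenvector; its saturation graph contains all critical edges, and along any cycle of saturated edges the weights telescope to $1$. The remaining steps—normalization, the acyclic case, and the bookkeeping of path lengths modulo $t$—are routine.
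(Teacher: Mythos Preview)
The paper does not actually prove this lemma: it is presented as an ``observation'' following a pointer to \cite{BSS-core1}, Theorem~3.3, and no argument is supplied here. Your write-up therefore provides a proof where the paper gives none, and it is correct.

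The route you take---normalising to $\rho(A)=1$, translating both sides into statements about length-$t$ walks in $\digr(A)$, and using the $W=R^{\,t}$ repetition to manufacture a closed walk of length divisible by $t$---is the natural combinatorial argument. The one nonroutine ingredient, which you rightly isolate, is the converse half of fact~(a): every closed walk built from critical edges has weight exactly~$1$. Your potential/telescoping justification via a column of $A^*$ (equivalently, diagonal scaling by an eigenvector so that every critical edge acquires weight~$1$) is the standard device for this and is sound. One small remark: your sketch of fact~(b) as stated only gives $\rho(A^t)\le 1$; the matching inequality $\rho(A^t)\ge 1$ (needed so that the weight-$1$ cycle you build in $\digr(A^t)$ is actually critical) comes from repeating a critical cycle of $A$ and reading it in $\digr(A^t)$, or simply from the identity $\rho(A^t)=\rho(A)^t$ implicit in Lemma~\ref{l:sameperron} and Theorem~\ref{t:samespectrum}.
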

This observation will allow us to apply Lemma~\ref{l:sameperron} to
the critical graphs $\crit(A^t)$ of matrix powers, that is, to the
critical matrices $(A^t)_C$.

All eigenvalues and spectral classes of matrix powers
are derived from those of $A$.

\begin{theorem}[\cite{BSS-core1}, Th.~5.4, Coro.~5.5]
\label{t:samespectrum}
Let $A\in\Rpnn$ and $t\geq 1$.
\begin{itemize}
\item[(i)]$\Lambda(A^t)=\{\rho^t;\  \rho\in\Lambda(A)\}.$
\item[(ii)] For each spectral class $\mu$ of $A$
with cyclicity $\sigma$ there are gcd$(t,\sigma)$ spectral classes
of $A^t$ derived from it. Conversely, each spectral class of
$A^t$ is derived from a spectral class of $A$.
\end{itemize}
\end{theorem}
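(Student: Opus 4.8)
The plan is to read off both statements from the block structure of $A^t$, combining the per-class analysis of Lemma~\ref{l:sameperron} with the access-preservation of Lemma~\ref{l:sameaccess}. First I would recall that $(A^t)_{\mu\mu}=(A_{\mu\mu})^t$ for every class $\mu$ of $A$, so that Lemma~\ref{l:sameperron}, applied to the irreducible block $A_{\mu\mu}$ of cyclicity $\sigma_\mu$, describes exactly how the classes of $A^t$ arise: each class $\mu$ of $A$ with Perron root $\rho_\mu$ produces precisely $\gcd(t,\sigma_\mu)$ classes of $A^t$ (its derived classes), all irreducible with Perron root $\rho_\mu^t$, and these partition $N_\mu$. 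Hence every class of $A^t$ is derived from a unique ancestor class of $A$, and the Perron roots of the classes of $A^t$ are exactly the numbers $\rho_\mu^t$ for $\mu$ a class of $A$. By Theorem~\ref{t:spectrum}, to identify $\Lambda(A^t)$ and its spectral classes it then remains to decide which derived classes $\tilde\nu$ satisfy the spectral condition that every class accessing $\tilde\nu$ has Perron root $\le\rho_{\tilde\nu}$.

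The key step is to transport this $\forall$-condition from $A^t$ back to $A$. Because a derived class $\tilde\nu$ is strongly connected and sits inside $N_\nu$ (with $\nu$ its ancestor), a class $\tilde\mu$ of $A^t$ accesses $\tilde\nu$ iff some index $i\in N_{\tilde\mu}\subseteq N_\mu$ accesses $\tilde\nu$. Applying Lemma~\ref{l:sameaccess} with $l=1$, such an $i$ accesses $\tilde\nu$ (a class of $A^t$ with Perron root $\rho_\nu^t$) iff it accesses the related class with Perron root $\rho_\nu$ in $A$, which for $l=1$ is the ancestor $\nu$ itself; and since $\mu$ is strongly connected this holds iff $\mu\to\nu$. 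Running the argument in both directions (and using that each $i\in N_\mu$ lies in some derived class) yields the dictionary: $\tilde\mu\to\tilde\nu$ in $A^t$ iff $\mu\to\nu$ in $A$, together with $\rho_{\tilde\mu}=\rho_\mu^t$ and $\rho_{\tilde\nu}=\rho_\nu^t$. In particular the Perron roots of the classes accessing $\tilde\nu$ are exactly $\{\rho_\mu^t : \mu\to\nu\}$, independently of which derived class of $\nu$ was chosen.

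Since $x\mapsto x^t$ is strictly increasing on $\Rp$, we have $\rho_\mu^t\le\rho_\nu^t$ iff $\rho_\mu\le\rho_\nu$; hence $\tilde\nu$ satisfies the spectral condition in $A^t$ iff its ancestor $\nu$ does in $A$. This gives (ii): each spectral class $\mu$ of $A$ contributes precisely its $\gcd(t,\sigma_\mu)$ derived classes, all spectral in $A^t$, and every spectral class of $A^t$ arises this way. Passing to Perron roots and invoking Theorem~\ref{t:spectrum} on both sides gives (i): $\Lambda(A^t)=\{\rho_{\tilde\nu}\neq 0 : \tilde\nu\ \text{spectral in}\ A^t\}=\{\rho_\nu^t : \nu\ \text{spectral in}\ A\}=\{\rho^t : \rho\in\Lambda(A)\}$. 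The main obstacle I anticipate is the bookkeeping of the middle paragraph: Lemma~\ref{l:sameaccess} is stated for index-to-class access, so one must check that the equivalence $\tilde\mu\to\tilde\nu\Leftrightarrow\mu\to\nu$ holds uniformly over all derived classes $\tilde\nu$ of a fixed ancestor (so that \emph{all} such classes, not merely some, come out spectral), and that the ``related class'' qualifier specialises correctly at $l=1$ to the ancestor. The remaining ingredients — monotonicity of $\rho\mapsto\rho^t$ and the counting of derived classes — are routine.
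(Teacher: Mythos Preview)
The paper does not give its own proof of this theorem; it is imported from~\cite{BSS-core1} as background. Your overall strategy --- read off the block structure from Lemma~\ref{l:sameperron}, transport the spectral condition through Lemma~\ref{l:sameaccess}, and then use the monotonicity of $x\mapsto x^t$ --- is the natural one and is, in outline, how the cited source proceeds.

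There is, however, a genuine gap in the middle paragraph, and it is precisely the obstacle you flag but do not resolve. The class-level biconditional $\tilde\mu\to\tilde\nu\ (\text{in }A^t)\Leftrightarrow\mu\to\nu\ (\text{in }A)$ is \emph{false}. For a witness, take two disjoint $2$-cycles $\mu=\{3,4\}$ and $\nu=\{1,2\}$ together with a single cross edge $3\to 1$. In $A^2$ each class splits into singletons and one finds $\{3\}\to\{2\}$, $\{4\}\to\{1\}$, but $\{3\}\not\to\{1\}$ and $\{4\}\not\to\{2\}$, even though $\mu\to\nu$ in $A$. The point is that Lemma~\ref{l:sameaccess} is an \emph{existential} statement: it tells you that $i$ accesses \emph{some} class of $A^t$ derived from $\nu$ iff $i$ accesses $\nu$ in $A$; it does not let you pick which derived class. (Compare the correct class-level formulation recorded later in the paper as Corollary~\ref{c:sameaccess}: for each $\tilde\mu$ there is \emph{some} $\tilde\nu$, and for each $\tilde\nu$ there is \emph{some} $\tilde\mu$.)

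This does not wreck your argument, because the spectral condition depends only on the \emph{set of Perron roots} of classes accessing $\tilde\nu$, not on which particular derived classes do the accessing --- and your stated conclusion about that set is in fact correct. The fix is to invoke \emph{both} clauses of Lemma~\ref{l:sameaccess}. The forward direction ($\tilde\mu\to\tilde\nu$ in $A^t$ implies $\mu\to\nu$ in $A$) is immediate, since an $A^t$-path is an $A$-path. For the reverse inclusion, fix $\tilde\nu$, pick any $j\in N_{\tilde\nu}$, and apply the ``is accessed by'' clause: $j$ is accessed in $A^t$ by some class derived from $\mu$ iff $j$ is accessed by $\mu$ in $A$, i.e.\ iff $\mu\to\nu$. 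Hence, for every derived class $\tilde\nu$ of $\nu$, the Perron roots of the classes of $A^t$ accessing $\tilde\nu$ are exactly $\{\rho_\mu^t:\mu\to\nu\text{ in }A\}$. With this in hand your final paragraph goes through unchanged and both (i) and (ii) follow.
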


As in the case of eigencones of a matrix, when
working with $V(A^t,\rho^t)$ we can assume that $\rho=1$ is the principal eigenvalue
of $A$, and hence of all $A^t$.

\begin{theorem}[\cite{BSS-core1}, Th.~5.7]
\label{t:reduction}
Let $A\in\Rpnn$, $t\geq 1$ and $\rho\in \Lambda(A)$.
\begin{itemize}
\item[(i)] $(A^t)_{\mrho\mrho}=\left(\rho^t (\amrho)^t\right)_{\mrho\mrho}$.
\item[(ii)] $V(A^t,\rho^t)=V((\amrho)^t,1)$.
\end{itemize}
\end{theorem}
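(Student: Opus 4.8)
The plan is to reduce both statements to the path interpretation of max-algebraic powers together with one combinatorial closure property of the index set $\mrho$, and then to apply Proposition~\ref{p:vamrho} not to $A$ but to the power $A^t$. Throughout I assume, as the paper permits, that the indices are ordered so that $\mrho$ occupies the last block, matching the block form of $\amrho$ in~\eqref{amrho}.

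For part~(i) I would first read off the block structure of $\amrho$. Since $\amrho=\rho^{-1}\left(\begin{smallmatrix}0&0\\0&\amrhomrho\end{smallmatrix}\right)$ is block diagonal with a vanishing block, all of its max-algebraic powers stay block diagonal, so that $(\amrho)^t=\left(\begin{smallmatrix}0&0\\0&\rho^{-t}(\amrhomrho)^t\end{smallmatrix}\right)$, using $(\rho^{-1}\amrhomrho)^t=\rho^{-t}(\amrhomrho)^t$. Hence $\bigl(\rho^t(\amrho)^t\bigr)_{\mrho\mrho}=(\amrhomrho)^t$, and the assertion of~(i) reduces to the identity $(A^t)_{\mrho\mrho}=(\amrhomrho)^t$, i.e.\ that restriction to $\mrho$ commutes with taking the $t$-th power. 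I would establish this through the path interpretation: for $i,j\in\mrho$ the entry $(A^t)_{ij}$ is the greatest weight of an $\itoj$ path of length $t$ in $\digr(A)$, while $\bigl((\amrhomrho)^t\bigr)_{ij}$ is the greatest weight of such a path confined to $\mrho$. The decisive observation is that $\mrho$ is closed under predecessors: if $k$ accesses some $i\in\mrho$, then from $i\to\nu$ with $\nu$ an $(A,\rho)$-spectral class we get $k\to\nu$, whence $k\in\mrho$. Consequently every intermediate node $k$ of an $\itoj$ path with $i,j\in\mrho$ satisfies $k\to j$ and $j\to\nu$, so $k\to\nu$ and $k\in\mrho$; thus every such path already lies entirely in $\mrho$, and the two maxima coincide.

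For part~(ii) I would apply Proposition~\ref{p:vamrho} to the matrix $A^t$ and its nonzero eigenvalue $\rho^t$, which lies in $\Lambda(A^t)$ by Theorem~\ref{t:samespectrum}(i). This yields $V(A^t,\rho^t)=V\bigl((A^t)_{\rho^t},1\bigr)$, where $(A^t)_{\rho^t}$ is built from $A^t$ exactly as $\amrho$ is built from $A$, using the set $M_{\rho^t}(A^t)$ of nodes that access an $(A^t,\rho^t)$-spectral class. The crux is to show $M_{\rho^t}(A^t)=\mrho$. This follows by combining Theorem~\ref{t:samespectrum}(ii), which identifies the $(A^t,\rho^t)$-spectral classes as exactly those derived from the $(A,\rho)$-spectral classes, with Lemma~\ref{l:sameaccess} in the case $l=1$, which says that a node accesses a derived class in $A^t$ if and only if it accesses the ancestor class in $A$. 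Once $M_{\rho^t}(A^t)=\mrho$ is known, the block form gives $(A^t)_{\rho^t}=\rho^{-t}\left(\begin{smallmatrix}0&0\\0&(A^t)_{\mrho\mrho}\end{smallmatrix}\right)$, and part~(i) lets me replace $(A^t)_{\mrho\mrho}$ by $(\amrhomrho)^t$, making this matrix equal to $(\amrho)^t$. Substituting back gives $V(A^t,\rho^t)=V\bigl((\amrho)^t,1\bigr)$, where $1$ is indeed the principal eigenvalue of $(\amrho)^t$ by Theorem~\ref{t:samespectrum}(i) applied to $\amrho$.

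I expect the main obstacle to be concentrated in the two closure facts. For~(i) it is the closure of $\mrho$ under predecessors, which is exactly what makes \emph{restriction to $\mrho$} commute with \emph{raising to the power $t$}; for~(ii) it is the identity $M_{\rho^t}(A^t)=\mrho$, which guarantees that applying the $\amrho$-construction and raising to the $t$-th power may be carried out in either order with the same outcome. Everything else is the block-diagonal bookkeeping indicated above, where I anticipate no essential difficulty.
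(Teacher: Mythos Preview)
The paper does not supply a proof of this theorem; it is quoted verbatim from~\cite{BSS-core1} as a preliminary result and no argument is given here. So there is nothing in the present paper to compare your proposal against.

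That said, your argument is sound and essentially the natural one. For~(i), the reduction to $(A^t)_{\mrho\mrho}=(\amrhomrho)^t$ is correct, and your observation that $\mrho$ is closed under predecessors (any node accessing a node of $\mrho$ is itself in $\mrho$) is exactly what forces every $\itoj$ path with $j\in\mrho$ to stay in $\mrho$, so the two path maxima agree. For~(ii), applying Proposition~\ref{p:vamrho} to $A^t$ and then identifying $M_{\rho^t}(A^t)$ with $\mrho$ via Theorem~\ref{t:samespectrum}(ii) and Lemma~\ref{l:sameaccess} is the right strategy; note only that you implicitly use that $\rho\mapsto\rho^t$ is injective on $\Rp$, so an $(A^t,\rho^t)$-spectral class can only descend from an $(A,\rho)$-spectral class and not from one with a different Perron root. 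With that caveat made explicit, your plan goes through without obstacle.
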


\section{Core and eigenvectors}
\label{s:core}

In this section we recall the main results of~\cite{BSS-core1}
on the matrix core, and its finite stabilization in special cases.
We also consider the case of integer entries and, briefly, the core in
max-min algebra and other algebras where
every sequence of matrix powers $\{A^t\}_{t\geq 1}$ is ultimately periodic.

\subsection{Core in general}

The purpose of~\cite{BSS-core1}, Section~4 was to show, by adapting
an argument of Pullman~\cite{Pul-71} to max algebra, that the max-algebraic core can be also represented
as the (Minkowski) sum of the eigencones of matrix powers, that is,
\begin{equation}
\label{e:core-mink}
\core(A)=\bigoplus_{t\geq 1,\rho\in\Lambda(A)} V(A^t,\rho^t).
\end{equation}
By definition, on the r.h.s. of~\eqref{e:core-mink} we have a
max cone consisting of all combinations $\bigoplus_t \alpha_t y^{(t)}$ with
$y^{(t)}\in V(A^t,\rho^t)$ and finite number of nonzero $\alpha_t$.

It was also shown in \cite{BSS-core1}, Section~7 that the sequence
of eigencones of matrix powers is periodic.
To describe this periodicity, the following notation was introduced:\\
1. $\sigma_{\rho}$: cyclicity of the critical graph associated with
an eigenvalue $\rho\in\Lambda(A)$,\\
2. $\sigma_{\Lambda}$: the lcm of all $\sigma_{\rho}$, over $\rho\in\Lambda(A)$,\\
and the Minkowski sums of all eigencones of matrix powers were
considered:
\begin{equation}
\label{e:vat}
V^{\Sigma}(A^t):=\bigoplus_{\rho\in\Lambda(A)} V(A^t,\rho^t).
\end{equation}
The periodicity can be described as follows.
\begin{theorem}[\cite{BSS-core1}, Main Theorem 2]
\label{t:girls}
Let $A\in\Rpnn$ and $\rho\in\Lmax(A)$. Then
the sequence of max cones $\{\vmax(A^t,\rho^t)\}_{t\geq 1}$
is immediately periodic with the period $\sigma_{\rho}$,
and $\vmax(A^t,\rho^t)\subseteq \vmax(A^{\sigma_{\rho}},\rho^{\sigma_{\rho}})$ for all $t$.
\end{theorem}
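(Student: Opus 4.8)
The plan is to reduce to the principal eigenvalue and then read the eigencone directly off the cyclic structure of the critical graph. By Theorem~\ref{t:reduction}(ii) we have $\vmax(A^t,\rho^t)=\vmax((\amrho)^t,1)$ for every $t$, and $\sigma_\rho$ is by definition the cyclicity of $\crit(\amrho)$. Hence I may replace $A$ by $\amrho$ and assume throughout that $\rho=1$ is the principal eigenvalue, writing $\sigma:=\sigma_\rho$ for the cyclicity of $\crit(A)$. Since $\rho(A)=1$, the Kleene star $A^*$ and every $(A^t)^*$ converge, so Theorem~\ref{t:FVmaxalg} applies to each power: $\vmax(A^t,1)$ is generated by the columns $(A^t)^*_{\cdot i}$ indexed by the critical nodes of $A^t$, with one generator per strongly connected component of $\crit(A^t)$, and columns attached to the same component being proportional.

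Next I would count and organize these components. By Lemma~\ref{l:critpowers} we have $(A^t)_C=(A_C)^t$, so applying Lemma~\ref{l:sameperron} to each strongly connected component $\maxmu$ of $\crit(A)$, of cyclicity $c:=\sigma_{\maxmu}$, shows that $\maxmu$ gives rise to exactly $\gcd(t,c)$ components of $\crit(A^t)$, each of cyclicity $c/\gcd(t,c)$. Concretely, writing the nodes of $\maxmu$ as cyclic classes $C_0,\dots,C_{c-1}$ with the shift $k\mapsto k+t \pmod c$ describing the action of $A^t$, these $\gcd(t,c)$ components are exactly the orbits of that shift, i.e. the cosets of $\langle \gcd(t,c)\rangle$ in $\ZZ/c\ZZ$. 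Thus, as $\maxmu$ ranges over the critical components of $A$, the number of generators (extremals) of $\vmax(A^t,1)$ is $\sum_{\maxmu}\gcd(t,\sigma_{\maxmu})$.

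For the inclusion I note that at $t=\sigma$ every $\gcd(\sigma,c)=c$, so each cyclic class is its own component and $\vmax(A^\sigma,1)$ is generated by the single-class eigenvectors $z_k:=(A^\sigma)^*_{\cdot i_k}$, $i_k\in C_k$, one for every cyclic class of every critical component -- the finest possible family. \textbf{The heart of the argument} is then a lemma identifying, for general $t$, the generator of $\vmax(A^t,1)$ attached to an orbit $O$ with the max-combination $\bigoplus_{k\in O}z_k$ of the corresponding single-class generators of $\vmax(A^\sigma,1)$. Granting this, every generator of $\vmax(A^t,1)$ lies in $\vmax(A^\sigma,1)$, which yields $\vmax(A^t,1)\subseteq \vmax(A^\sigma,1)$ as asserted. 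I would prove the lemma by comparing the path expansions $(A^t)^*_{\cdot i}=\bigoplus_{s\ge 0}A^{ts}_{\cdot i}$ and $z_k=\bigoplus_{s\ge 0}A^{\sigma s}_{\cdot i_k}$ on the critical graph, using the Cyclicity Theorem~\ref{t:Cycl1} applied to the critical components so that both expansions stabilize, and then matching the weights class by class.

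Finally, for immediate periodicity I observe that the orbit partition of $k\mapsto k+t$ depends on $t$ only through $\gcd(t,c)$ and the cosets modulo $\gcd(t,c)$, hence is unchanged when $t$ is replaced by $t+c$; taking the lcm over all critical components gives $\vmax(A^{t+\sigma},1)=\vmax(A^t,1)$ for every $t\ge 1$, with no transient, so the sequence is immediately periodic with period dividing $\sigma$. That $\sigma$ is the smallest period follows from the generator count: the number of extremals is an intrinsic invariant of the cone equal to $\sum_{\maxmu}\gcd(t,\sigma_{\maxmu})$, whose maximal value $\sum_{\maxmu}\sigma_{\maxmu}$ is attained precisely when $\sigma\mid t$, forcing any period $p$ to satisfy $\sigma\mid p$. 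The main obstacle is the generator-identity lemma of the previous paragraph: the $\gcd$ and cyclic-class bookkeeping is routine, but expressing the critical columns of $(A^t)^*$ exactly as max-combinations of those of $(A^\sigma)^*$ requires the careful, order-free stabilization analysis of weights on the critical graph.
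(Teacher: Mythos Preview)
The paper does not give its own proof of this theorem: it is quoted verbatim from~\cite{BSS-core1} (Main Theorem~2) and used as a black box, so there is no in-paper argument to compare your proposal against.

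That said, your outline is essentially correct and is in the spirit of the machinery the paper imports from~\cite{BSS-core1}: the reduction to $\rho=1$ via Theorem~\ref{t:reduction}(ii), the identification $\crit(A^t)=\crit(A)^t$ via Lemma~\ref{l:critpowers}, and the component count via Lemma~\ref{l:sameperron} are exactly the right tools, and your minimal-period argument through the extremal count $\sum_{\maxmu}\gcd(t,\sigma_{\maxmu})$ is clean and correct. The one place that needs more care is your ``generator-identity lemma''. The statement that the column $(A^t)^*_{\cdot i}$ equals $\bigoplus_{k\in O} (A^{\sigma})^*_{\cdot i_k}$ is true, but the path-expansion comparison you sketch is delicate off the critical graph, because optimal $j\!-\!i$ paths may leave and re-enter $N_c$. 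A cleaner route is: show the identity on the critical coordinates (where, after a diagonal similarity making all critical edge weights equal to $1$, both sides are just the $0$--$1$ indicator of the orbit $O$), observe that both sides lie in $\vmax(A^{\operatorname{lcm}(t,\sigma)},1)$, and then invoke Lemma~\ref{l:zeq} (applied to $A^{\operatorname{lcm}(t,\sigma)}$) to conclude that equality on $N_c$ forces equality everywhere. This replaces the ``order-free stabilization analysis of weights'' you flag as the obstacle by a one-line uniqueness argument.
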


\begin{theorem}[\cite{BSS-core1}, Main Theorem 2]
\label{t:girls-general} Let $A\in\Rpnn$. Then the sequence of max
cones $\{\vmax^{\Sigma}(A^t)\}_{t\geq 1}$ is immediately periodic
with the period $\sigma_{\Lambda}$, and
$\vmax^{\Sigma}(A^t)\subseteq \vmax^{\Sigma}(A^{\sigma_{\Lambda}})$
for all $t$.
\end{theorem}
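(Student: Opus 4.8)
The plan is to derive Theorem~\ref{t:girls-general} from Theorem~\ref{t:girls} by exploiting the fact that $\sigma_{\Lambda}$ is the lcm of the individual periods $\sigma_{\rho}$. First I would observe that by the very definition~\eqref{e:vat}, $\vmax^{\Sigma}(A^t)=\bigoplus_{\rho\in\Lambda(A)}\vmax(A^t,\rho^t)$, so the behaviour of the combined sequence is controlled by the simultaneous behaviour of the individual sequences $\{\vmax(A^t,\rho^t)\}_{t\geq 1}$. By Theorem~\ref{t:girls}, each of these is immediately periodic with period $\sigma_{\rho}$. Since $\sigma_{\rho}\mid\sigma_{\Lambda}$ for every $\rho\in\Lambda(A)$, each individual sequence is also immediately periodic with period $\sigma_{\Lambda}$ (a multiple of its minimal period), that is, $\vmax(A^{t+\sigma_{\Lambda}},\rho^{t+\sigma_{\Lambda}})=\vmax(A^t,\rho^t)$ for all $t\geq 1$ and all $\rho$.

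Taking the Minkowski (max-cone) sum of these equalities over $\rho\in\Lambda(A)$ then gives $\vmax^{\Sigma}(A^{t+\sigma_{\Lambda}})=\vmax^{\Sigma}(A^t)$ for all $t\geq 1$, which is immediate periodicity of $\{\vmax^{\Sigma}(A^t)\}$ with period $\sigma_{\Lambda}$ (or possibly a proper divisor of it; see below). For the containment claim I would argue componentwise: Theorem~\ref{t:girls} yields $\vmax(A^t,\rho^t)\subseteq\vmax(A^{\sigma_{\rho}},\rho^{\sigma_{\rho}})$, and by the immediate periodicity just established with period $\sigma_{\Lambda}$, the right-hand side equals $\vmax(A^{\sigma_{\Lambda}},\rho^{\sigma_{\Lambda}})$ because $\sigma_{\rho}\mid\sigma_{\Lambda}$ forces $\sigma_{\Lambda}$ and $\sigma_{\rho}$ to be congruent indices for this periodic sequence. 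Summing over $\rho$ gives $\vmax^{\Sigma}(A^t)\subseteq\vmax^{\Sigma}(A^{\sigma_{\Lambda}})$ for all $t$, as required. I would make sure to note that the max-cone sum is monotone with respect to inclusion of summands, so that the containments pass through $\bigoplus_{\rho}$ without difficulty.

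The one genuinely delicate point, and the step I expect to be the main obstacle, is the claim that the \emph{minimal} period of the combined sequence is exactly $\sigma_{\Lambda}$ and not a proper divisor of it. The argument above only shows that $\sigma_{\Lambda}$ is \emph{a} period; minimality requires showing that no smaller $p$ works. The natural strategy is to suppose $p$ is a period of $\{\vmax^{\Sigma}(A^t)\}$ and try to conclude that $\sigma_{\rho}\mid p$ for each $\rho$, whence $\sigma_{\Lambda}=\operatorname{lcm}_{\rho}\sigma_{\rho}\mid p$. The difficulty is that a coincidence $\vmax^{\Sigma}(A^{t+p})=\vmax^{\Sigma}(A^t)$ is an equality of Minkowski \emph{sums} and does not obviously decouple into equalities of the individual eigencone summands $\vmax(A^t,\rho^t)$. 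To disentangle them I would use that the eigencones $\vmax(A^t,\rho^t)$ for distinct $\rho$ are associated with distinct eigenvalues, so their extremals live on distinct scales; invoking Theorem~\ref{t:FVmaxalg}(iii), which identifies the extremals of each principal eigencone as the strongly linearly independent generators $x_{\maxmu}$, together with the reduction in Theorem~\ref{t:reduction}(ii), I would argue that the extremals of the sum $\vmax^{\Sigma}(A^t)$ are precisely the union of the extremals of the summands, so that the summands can be recovered from the sum and the period decouples across $\rho$. Establishing this separation-of-eigencones fact carefully is where the real work lies; the reduction to Theorem~\ref{t:girls} and the lcm bookkeeping are otherwise routine.
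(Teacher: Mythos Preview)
This theorem is not proved in the present paper; it is quoted from~\cite{BSS-core1} (Main Theorem~2) without argument, so there is no in-paper proof to compare your proposal against. That said, your reduction to Theorem~\ref{t:girls} is the natural route, and the parts you call routine are indeed correct: immediate $\sigma_{\Lambda}$-periodicity of each summand follows from $\sigma_{\rho}\mid\sigma_{\Lambda}$ and passes to the Minkowski sum, and the containment follows since immediate periodicity with period $\sigma_{\rho}$ gives $\vmax(A^{\sigma_{\rho}},\rho^{\sigma_{\rho}})=\vmax(A^{\sigma_{\Lambda}},\rho^{\sigma_{\Lambda}})$.

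On the minimality of $\sigma_{\Lambda}$ you have correctly located the real work. One refinement to your plan: it is automatic that every extremal of the sum $\vmax^{\Sigma}(A^t)$ is an extremal of some summand (write $x=v_{1}\oplus v_{2}$ with $v_{i}$ in the sum and use extremality). What is \emph{not} automatic is the converse --- that every extremal $x_{\maxmu}$ of each $\vmax(A^t,\rho^t)$ survives as an extremal of $\vmax^{\Sigma}(A^t)$ --- and it is this direction that is needed to recover the individual summands from the sum and hence to decouple the period. Theorem~\ref{t:FVmaxalg}(iii) alone does not give this, since it only asserts extremality within a \emph{single} eigencone; across different eigenvalues an $x_{\maxmu}$ could in principle be absorbed by combinations of $x_{\maxnu}$'s from other eigencones. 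The separation has to be argued via the support/access description of Theorem~\ref{t:FVmaxalg}(i') (each $x_{\maxmu}$ is the unique generator whose support terminates at the spectral class containing $\maxmu$), which is precisely the content of the ``extremals of the core'' result in~\cite{BSS-core1}. So your outline is right, but anchor the separation step in part~(i') rather than part~(iii).
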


In particular,
$V(A^t,\rho^t)\subseteq V(A^{\sigma_{\Lambda}},\rho^{\sigma_{\Lambda}})$ for all
$\rho\in\Lambda(A)$, and hence
\begin{equation}
\label{e:vasigma}
\bigoplus_{t\geq 1,\rho\in\Lambda(A)} V(A^t,\rho^t)= \bigoplus_{\rho\in\Lambda(A)}
V(A^{\sigma_{\Lambda}},\rho^{\sigma_{\Lambda}})=V^{\Sigma}(A^{\sigma_{\Lambda}}).
\end{equation}

The central result of~\cite{BSS-core1} can be formulated
as follows:

\begin{theorem}[\cite{BSS-core1}, Main Theorem 1]
\label{t:core}
Let $A\in\Rpnn$. Then\\
$\core(A)=\bigoplus_{\rho\in\Lambda(A)}
V(A^{\sigma_{\rho}},\rho^{\sigma_{\rho}})
=V^{\Sigma}(A^{\sigma_{\Lambda}})$.
\end{theorem}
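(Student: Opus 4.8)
The plan is to assemble the statement directly from the Pullman-type representation \eqref{e:core-mink} together with the periodicity results of Theorems~\ref{t:girls} and~\ref{t:girls-general}; no fresh ingredient beyond these is needed, since the genuine work has already been invested in proving them. First I would take as starting point the identity $\core(A)=\bigoplus_{t\geq 1,\rho\in\Lambda(A)} V(A^t,\rho^t)$ from \eqref{e:core-mink}, noting that by Theorem~\ref{t:samespectrum}(i) each $\rho^t$ genuinely belongs to $\Lambda(A^t)$, so every summand is a well-defined eigencone. Grouping the summands by the common power $t$, the right-hand side becomes $\bigoplus_{t\geq 1} V^{\Sigma}(A^t)$ in the notation of \eqref{e:vat}.

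Next I would collapse this infinite Minkowski sum to a single term. Theorem~\ref{t:girls-general} provides exactly the two facts required: the sequence $\{V^{\Sigma}(A^t)\}_{t\geq 1}$ is immediately periodic with period $\sigma_{\Lambda}$, and, more importantly, $V^{\Sigma}(A^t)\subseteq V^{\Sigma}(A^{\sigma_{\Lambda}})$ for every $t$. The latter inclusion shows that the whole sum $\bigoplus_{t\geq 1} V^{\Sigma}(A^t)$ is contained in $V^{\Sigma}(A^{\sigma_{\Lambda}})$, while the reverse inclusion is trivial because $V^{\Sigma}(A^{\sigma_{\Lambda}})$ is itself the term at $t=\sigma_{\Lambda}$. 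This yields the outer equality $\core(A)=V^{\Sigma}(A^{\sigma_{\Lambda}})$, which is precisely \eqref{e:vasigma} read together with \eqref{e:core-mink}.

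It remains to match the middle expression $\bigoplus_{\rho\in\Lambda(A)} V(A^{\sigma_{\rho}},\rho^{\sigma_{\rho}})$ with $V^{\Sigma}(A^{\sigma_{\Lambda}})=\bigoplus_{\rho\in\Lambda(A)} V(A^{\sigma_{\Lambda}},\rho^{\sigma_{\Lambda}})$. Here I would argue termwise, fixing $\rho\in\Lambda(A)$. By Theorem~\ref{t:girls} the sequence of max cones $\{V(A^t,\rho^t)\}_{t\geq 1}$ is immediately (not merely ultimately) periodic with period $\sigma_{\rho}$, so $V(A^{k\sigma_{\rho}},\rho^{k\sigma_{\rho}})=V(A^{\sigma_{\rho}},\rho^{\sigma_{\rho}})$ for every $k\geq 1$. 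Since $\sigma_{\Lambda}$ is by definition the lcm of the $\sigma_{\rho}$, it is a multiple of $\sigma_{\rho}$, and taking $k=\sigma_{\Lambda}/\sigma_{\rho}$ gives $V(A^{\sigma_{\Lambda}},\rho^{\sigma_{\Lambda}})=V(A^{\sigma_{\rho}},\rho^{\sigma_{\rho}})$. Summing over $\rho$ furnishes the middle equality and completes the proof.

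The main obstacle is essentially absent at this level: the delicate content---the Minkowski-sum representation of the core and, above all, the \emph{immediate} (rather than ultimate) periodicity of the eigencone sequences together with the divisibility structure $\sigma_{\rho}\mid\sigma_{\Lambda}$---is entirely supplied by \eqref{e:core-mink} and Theorems~\ref{t:girls}--\ref{t:girls-general}. The only point demanding a little care is to keep the two grades of periodicity apart: immediate periodicity is what licenses evaluating each sequence at the single exponent $\sigma_{\rho}$ (or $\sigma_{\Lambda}$) rather than at some large threshold, and it is the inclusions $V^{\Sigma}(A^t)\subseteq V^{\Sigma}(A^{\sigma_{\Lambda}})$ and $V(A^t,\rho^t)\subseteq V(A^{\sigma_{\rho}},\rho^{\sigma_{\rho}})$ that guarantee the finite representatives already dominate every other term in the respective Minkowski sums.
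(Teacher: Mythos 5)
Your argument is correct and coincides with the paper's own derivation: the paper likewise obtains Main Theorem 1 by combining the Minkowski-sum representation \eqref{e:core-mink} with the immediate periodicity and inclusion statements of Theorems~\ref{t:girls} and~\ref{t:girls-general}, collapsing the infinite sum exactly as in \eqref{e:vasigma}. Your termwise identification $V(A^{\sigma_{\Lambda}},\rho^{\sigma_{\Lambda}})=V(A^{\sigma_{\rho}},\rho^{\sigma_{\rho}})$ via $\sigma_{\rho}\mid\sigma_{\Lambda}$ is the same mechanism, so no further comment is needed.
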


Note that the inclusion  $\bigoplus_{\rho\in\Lambda(A)} V(A^{\sigma_{\rho}},\rho^{\sigma_{\rho}})\subseteq\core(A)$ holds since each vector in $V(A^{\sigma_{\rho}},\rho^{\sigma_{\rho}})$ is in
$\spann(A^{t\sigma_{\rho}})$ for all $t\geq 1$, hence in $\core(A)$.

The proof of
the opposite inclusion relied on the facts collected in
Lemma~\ref{l:actioncore} below. Here, a vector $v\in\Rpn$ is called {\em scaled} if
$||v||:=\max\limits_{i=1}^n v_i=1$, and $A$ induces a mapping on the scaled
vectors of $\core(A)$ by $v\mapsto Av/||Av||$.

\begin{lemma}
\label{l:actioncore}
Let $A\in\Rpnn$, then
\begin{itemize}
\item[(i)] $\core(A)$ is generated by no more than $n$ vectors,
\item[(ii)] the mapping induced by $A$ on $\core(A)$ is a surjection,
\item[(iii)] the mapping induced by $A$ on the scaled extremals of $\core(A)$
is a permutation (i.e., a bijection).
\end{itemize}
\end{lemma}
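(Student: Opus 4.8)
The plan is to prove the three parts using the structural result of Theorem~\ref{t:core}, namely that $\core(A)=V^{\Sigma}(A^{\sigma_{\Lambda}})$, together with the periodicity statements of Theorems~\ref{t:girls} and~\ref{t:girls-general}.

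For part~(i), I would invoke Theorem~\ref{t:FVmaxalg} applied to each power $A^{\sigma_{\rho}}$: each eigencone $V(A^{\sigma_{\rho}},\rho^{\sigma_{\rho}})$ is generated by the eigenvectors $x_{\maxmu}$ corresponding to the components of the critical graph, and by Theorem~\ref{t:samespectrum}(ii) the number of such components (summed over all $\rho\in\Lambda(A)$ and over the derived spectral classes) is controlled by the critical nodes of $A$ itself. Since distinct components of critical graphs occupy disjoint sets of critical nodes $N_{\maxmu}$, their total number cannot exceed $n$. Combined with the representation $\core(A)=\bigoplus_{\rho} V(A^{\sigma_{\rho}},\rho^{\sigma_{\rho}})$, this bounds the number of generators by $n$.

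For part~(ii), the key point is the inclusion $V(A^t,\rho^t)\subseteq V(A^{\sigma_{\Lambda}},\rho^{\sigma_{\Lambda}})$ together with the observation that $A$ maps $V(A^t,\rho^t)$ into $V(A^{t-1},\rho^{t-1})$ after a suitable scaling, since if $A^t\otimes x=\rho^t x$ then $A^{t}\otimes(Ax)=\rho^t(Ax)$, so $Ax$ again lies in the same eigencone; more usefully, applying $A$ to a generator of $V(A^{\sigma_{\Lambda}},\rho^{\sigma_{\Lambda}})$ stays inside $\core(A)$ because $\core(A)$ is $A$-invariant by its very definition as $\bigcap_t\spann(A^t)$. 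Surjectivity then follows from a dimension/finiteness argument: on the finite set of scaled extremals, established in part~(iii) to be permuted, the induced map is onto, and since $\core(A)$ is generated by these extremals and the map respects max-combinations, every scaled vector of $\core(A)$ has a preimage.

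For part~(iii), which I expect to be the main obstacle, I would proceed as follows. By Theorem~\ref{t:core} and Theorem~\ref{t:FVmaxalg}(iii), the scaled extremals of $\core(A)$ are precisely the normalized eigenvectors $x_{\maxmu}$ ranging over all critical components $\maxmu$ of all the powers $A^{\sigma_{\rho}}$. The action of $A$ on such an eigenvector $x_{\maxmu}$ must again be (a scalar multiple of) an extremal, because $A$ commutes with the power maps and because the critical components of $A^t$ are related to those of $A$ via the derivation/relatedness equivalence of Lemma~\ref{l:sameperron} and Lemma~\ref{l:critpowers}. Concretely, within a single critical component of cyclicity $\sigma$, multiplying a critical column $A^*_{\cdot i}$ by $A$ shifts the critical node $i$ one step along a critical cycle, so $A$ cyclically permutes the extremals attached to that component; summing over all components and all eigenvalues gives a permutation of the full set of scaled extremals. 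The main difficulty is to verify rigorously that $A\otimes x_{\maxmu}$ lands on exactly one extremal (not a genuine combination of several) and that no two distinct extremals are sent to the same one; this requires carefully tracking how the critical nodes of the associated critical graph are permuted by multiplication by $A$, using the primitivity of $A^{\sigma}$ from Lemma~\ref{l:sameperron} to guarantee that the cyclic shift on critical nodes is a well-defined bijection of period equal to the cyclicity. Injectivity on the finite extremal set, combined with finiteness, upgrades the map to a bijection and simultaneously yields the surjectivity needed in part~(ii).
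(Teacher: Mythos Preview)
Your proposal has a circularity problem. The paper does not give an independent proof of Lemma~\ref{l:actioncore}; it imports the lemma from~\cite{BSS-core1} and explicitly records that ``the proof of the opposite inclusion [in Theorem~\ref{t:core}] relied on the facts collected in Lemma~\ref{l:actioncore}.'' In other words, in the logical architecture of~\cite{BSS-core1}, Lemma~\ref{l:actioncore} is established \emph{first}, by a direct Pullman-type argument working only with the definition $\core(A)=\bigcap_t\spanmax(A^t)$, and Theorem~\ref{t:core} is then \emph{derived} from it. Your plan inverts this order: every one of your three parts appeals to Theorem~\ref{t:core} (or to Theorems~\ref{t:girls}--\ref{t:girls-general}, which feed into it) to obtain the explicit description of $\core(A)$ and its extremals, and then reads off the claims. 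That is circular in the present context.

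The intended argument, sketched roughly, goes in the opposite direction. One observes that the sequence $\{\spanmax(A^t)\}_{t\geq 1}$ is nested and decreasing, so each $\spanmax(A^t)$ has at most $n$ scaled extremals; a compactness (limit-point) argument then shows that $\core(A)$ itself has at most $n$ scaled extremals, giving~(i). Surjectivity~(ii) comes straight from the definition: if $y\in\core(A)$ then $y\in\spanmax(A^{t+1})=A\otimes\spanmax(A^t)$ for every $t$, and one extracts a preimage in $\core(A)$ by another limit argument. For~(iii), surjectivity forces $A$ to send extremals to extremals (an extremal must be the image of an extremal), and since the set of scaled extremals is finite by~(i), the induced self-map is a surjection of a finite set onto itself, hence a bijection. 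None of this uses the identification $\core(A)=V^{\Sigma}(A^{\sigma_{\Lambda}})$; rather, that identification is the payoff.

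Setting circularity aside, your outline for~(iii) also leaves its self-declared ``main difficulty'' unresolved: you note that one must check $A\otimes x_{\Tilde\mu}$ is a single extremal and that the map is injective on extremals, but the sketch of a cyclic shift along critical cycles does not yet supply that verification.
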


Note that in {\bf nonnegative linear algebra} $A$ is always bijective on its core. This follows since
the extreme generators of the nonnegative core are linearly independent in the usual linear algebra, so
$A$ just permutes and rescales the elements of basis of the linear-algebraic span of the nonnegative core.

In {\bf max algebra}, $A$ is in general non-bijective on $\core(A)$, and we later describe when $A$ is bijective. Moreover, {\em finite stabilization of the core}
(when $\core(A)=\spann(A^t)$ for all
large enough $t$) is often observed. Note that in this case, Theorem~\ref{t:core} follows
immediately.

\if{
If $z$ is an eigenvector of $A^{\sigma}$, then for each class of the
Frobenius normal form of $A^{\sigma}$, either all indices of the
class are in $\supp(z)$, or none. This extends to all combinations of such
eigenvectors, i.e., to all vectors of $\core(A)$, and we also have the following
observation.
}\fi

\subsection{Ultimate periodicity and finite stabilization}
\label{ss:maxalg-easy}

In max algebra there are wide classes of matrices $A\in\Rpnn$ where we have finite stabilization of the core. We list some of them.
\begin{itemize}
\item $\goodclass_1:$ {\bf Irreducible matrices}.
\item $\goodclass_2:$ {\bf Ultimately periodic matrices.} This is when we have
$A^{t+\sigma}=\rho^{\sigma} A^t$ for all sufficiently large $t$,
with $\rho=\rhomax(A)$. As shown in~\cite{MP-00}, this happens if and only if the Perron roots of all nontrivial classes
of $A$ equal $\rho(A)$.
\item $\goodclass_3:$ {\bf Robust matrices.} For any nonzero vector $x\in\Rpn$ the orbit $\{A^tx\}_{t\geq 1}$ hits an eigenvector of $A$,
implying that the whole remaining part of the orbit consists of multiples of that eigenvector. The notion
of robustness was introduced and studied in~\cite{BCG-09}, and it will be revisited below.
\item $\goodclass_4:$ {\bf Orbit periodic matrices:} For any nonzero vector $x\in\Rpn$ the orbit $\{A^tx\}_{t\geq 1}$ hits an eigenvector of $A^{\sigma}$,
implying that the remaining part of the orbit is periodic (with some growth rate).
See~\cite{SerSch}, Section 7 and below for
characterization.
\item $\goodclass_5:$ {\bf Column periodic matrices.} This is when for any $i=1,\ldots,n$ we have
$(A^{t+\sigma})_{\cdot i}=\rho_i^{\sigma} A^t_{\cdot i}$ for all large enough $t$
and some
$\rho_i$.
\end{itemize}

Observe that $\goodclass_1\subseteq\goodclass_2\subseteq\goodclass_4\subseteq\goodclass_5$ and
$\goodclass_3\subseteq\goodclass_4$ (see, e.g., \cite{BSS-core1}, Section 4).
To see that $\spanmax(A^t)=\coremax(A)$ for all large enough $t$ in all these
cases, observe that in the column periodic case all sequences of
columns end up with periodically repeating eigenvectors of $A^{\sigma}$, which implies that $\spanmax(A^t)\subseteq\coremax(A)$ for all large enough $t$, and hence also $\spanmax(A^t)=\coremax(A)$.
Thus, finite stabilization of the core occurs in all these classes.

\subsection{Integer max-plus case}

Let us consider the semiring $\ZZ_{\max}$ defined on the set $\ZZ\cup\{-\infty\}$ with
$\oplus=\max$ and $\otimes=+$. This is a subsemiring of the max-plus algebra $\Rmax$, which
is isomorphic to the max(-times) algebra that we consider. Essentially this is the case
of integer matrices in $\Rmax^{n\times n}$. For such integer matrices Theorem~\ref{t:core} holds by isomorphism,
with max-plus arithmetics. We will show that the generators of the core of such matrices are in $\ZZ_{\max}^n$, i.e., that they have integer components.
This implies that Theorem~\ref{t:core} is also true in $\ZZ_{\max}$.
\begin{theorem}
\label{t:integer}
Let $A\in\Rmax^{n\times n}$ have integer entries only. Then $\coremax(A)$
is generated by integer vectors.
\end{theorem}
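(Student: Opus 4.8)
The plan is to combine the description of the core as a sum of eigencones of the single power $A^{\sigma_{\Lambda}}$ (Theorem~\ref{t:core}) with the explicit Kleene-star description of eigencones (Theorem~\ref{t:FVmaxalg}), and to observe that everything stays integer once we pass to this power. Throughout I work in $\ZZ_{\max}$, so that scalar multiplication is addition of integers and path weights are sums of integer edge weights. By Theorem~\ref{t:core} we have $\core(A)=V^{\Sigma}(A^{\sigma_{\Lambda}})=\bigoplus_{\rho\in\Lambda(A)}V(A^{\sigma_{\Lambda}},\rho^{\sigma_{\Lambda}})$, so it suffices to produce integer generators for each eigencone $V(B,\rho^{\sigma_{\Lambda}})$, where $B:=A^{\sigma_{\Lambda}}$. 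The matrix $B$ has integer entries, since in $\ZZ_{\max}$ each entry of a power is a maximal sum of integer edge weights.

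The arithmetic heart is to show that in max-plus terms each eigenvalue of $B$ is an integer, i.e.\ that $\sigma_{\Lambda}\rho\in\ZZ$ for every $\rho\in\Lambda(A)$ (recall that $\rho^{\sigma_{\Lambda}}$ in max-times corresponds to $\sigma_{\Lambda}\rho$ in max-plus). Each $\rho$ is the maximum cycle mean of the associated block, hence a rational $p/q$ in lowest terms. I would then argue that every cycle $C$ of the critical graph associated with $\rho$ has mean exactly $\rho$; since the weight $w(C)$ of such a cycle is a sum of integer edge weights, $w(C)=\rho\, l(C)\in\ZZ$ forces $q\mid l(C)$. Consequently $q$ divides the gcd of the elementary critical cycle lengths in each critical component, hence $q\mid\sigma_{\rho}$, and as $\sigma_{\rho}\mid\sigma_{\Lambda}$ we obtain $q\mid\sigma_{\Lambda}$ and $\sigma_{\Lambda}\rho\in\ZZ$.

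With $B$ and all its eigenvalues integer, I would reduce to the principal eigencone. Fix $\lambda=\rho^{\sigma_{\Lambda}}\in\Lambda(B)$ (the spectrum of $B$ being $\{\rho^{\sigma_{\Lambda}}\}$ by Theorem~\ref{t:samespectrum}). The matrix $B_{\lambda}$ of~\eqref{amrho}, formed from $\lambda^{-1}B_{M_{\lambda}M_{\lambda}}$, is in max-plus the shift of the integer block $B_{M_{\lambda}M_{\lambda}}$ by $-\sigma_{\Lambda}\rho$ padded with max-plus zeros; since the shift is by the integer $\sigma_{\Lambda}\rho$, the entries of $B_{\lambda}$ lie in $\ZZ\cup\{-\infty\}$. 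By Proposition~\ref{p:vamrho} and Theorem~\ref{t:FVmaxalg} the eigencone $V(B,\lambda)=V(B_{\lambda},1)$ is generated by columns of the Kleene star $(B_{\lambda})^{*}=I\oplus B_{\lambda}\oplus\ldots\oplus B_{\lambda}^{\,n-1}$; each entry of this star is a maximal path weight, hence a sum of integers (or $-\infty$ on the complement of the support), while the diagonal entries are the max-plus unit $0$. Thus these generators lie in $\ZZ_{\max}^{n}$, and letting $\rho$ range over $\Lambda(A)$ exhibits $\core(A)$ as generated by integer vectors.

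The step I expect to be the main obstacle is the number-theoretic claim $q\mid\sigma_{\rho}$. It rests on the standard but nontrivial fact that every cycle of the critical graph is critical, i.e.\ has mean exactly $\rho$ (equivalently, mean $0$ after normalization to principal eigenvalue in $B_{\lambda}$), which must be invoked carefully so that ``$q$ divides every critical cycle length'' genuinely applies to the elementary cycles defining the cyclicity. The remaining work is the bookkeeping of matching the max-times exponent $\sigma_{\Lambda}$ with the integrality of the max-plus eigenvalue $\sigma_{\Lambda}\rho$, after which the Kleene-star integrality is immediate.
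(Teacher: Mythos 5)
Your proof is correct and follows essentially the same route as the paper: reduce via Theorem~\ref{t:core} to the eigencones of a power of $A$, show the corresponding eigenvalues are integers by arithmetic on critical cycle lengths, and conclude from the Kleene-star description of Theorem~\ref{t:FVmaxalg}. The only difference is organizational: where you argue that the denominator $q$ of $\rho$ divides every elementary critical cycle length and hence the cyclicity $\sigma_{\rho}\mid\sigma_{\Lambda}$, the paper first passes to $A^{\sigma}$ (whose critical components are primitive) and then applies an lcm identity --- both hinge on the same standard fact, which you rightly flag, that every cycle of the critical graph has mean exactly $\rho$.
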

\begin{proof}
We show that $\vmax(A^{\sigma},\rho^{\sigma})$, for any $\rho\in\Lambda(A)$ and $\sigma=\sigma_{\rho}$, is generated by integer vectors.
For this, we need to show that all $\rho\in\Lambda(A^{\sigma})$,
that is, the maximal cycle means in the spectral blocks of $A$, are
integer.
More precisely, each $\rho\in\Lambda(A^{\sigma})$ can be expressed as
\begin{equation}
\label{e:mcm}
\rho=\frac{a_{i_1i_2}^{(\sigma)}+\ldots+a^{(\sigma)}_{i_mi_1}}{m},
\end{equation}
where $(i_1,\ldots,i_m)$ is any elementary cycle in a strongly connected component of the critical
graph corresponding to $\rho$.  Eqn.~\eqref{e:mcm} implies that
$\rho=K/M$, where $M$ is the lcm of all lengths of elementary cycles
$(i_1,\ldots,i_m)$, and $K$ is a multiple of $M/m$ for any such cycle.
All strongly connected components of the critical graph of $A^{\sigma}$ are primitive, hence the gcd of all denominators in~\eqref{e:mcm}, taken over all elementary cycles $(i_1,\ldots,i_m)$ in a critical component, is $1$.

Next we use the following observation from the elementary number theory:
if the numbers $m_i$, for $i=1,\ldots,\ell$ are coprime (i.e., gcd$(m_i)=1$),
and $M=$ lcm$(m_i)$, then lcm$(M/m_i)=M$. To prove this observation, we
set $M'=$ lcm$(M/m_i)$. Clearly, $M'$ divides $M$. If $M'\neq M$ then
$M=\alpha M'$ with $\alpha>1$. But then for each $i$ there is an integer
$k_i\geq 1$ such that
$$ m_i=\alpha\frac{M'}{M/m_i}=k_i\alpha,\;\alpha>1,$$
hence $\alpha$ divides gcd$(m_i)$ contradicting gcd$(m_i)=1$.

Since $K$ is a multiple of
$M/m_i$ and lcm$(M/m_i)=M$, we obtain that $K$ is a multiple of $M$ so $\rho$ in~\eqref{e:mcm} is integer.
Since all $\rho\in\Lambda(A^{\sigma})$ are integer, and since extremals of the cone
are columns of $(A^{\sigma}_{\rho})^*$ for $\rho\in\Lambda(A)$ (by Theorem~\ref{t:core} and
Theorem~\ref{t:FVmaxalg}), the result follows.
\end{proof}

This result can be also deduced from the Cyclicity
Theorem~\ref{t:Cycl1}. By that result, $\rho^{\sigma}$ satisfies
$A^{t+\sigma}=\rho^{\sigma} A^t$ for all large enough $t$, for $A$
irreducible. If $A$ has only integer entries then so do all the
powers of $A$, and we deduce that $\rho^{\sigma}$ is integer. In the
case of reducible $A$, this argument is applied to the powers of
each submatrix of $A$ that corrsesponds to a spectral class (observe
that the cyclicity of the critical graph in each spectral class
divides $\sigma$, which is the lcm of all such cyclicities).

\subsection{Core in max-min algebra}

Max-min algebra gives an example where
the ultimate periodicity of $A^t$ takes place for all $A$, with the growth rate $1$~\cite{Gav:04}.
Indeed, the operations in this semiring are such that all entries of $A^t$ are among the entries of
$A$, so they start to repeat after some time, with a period bounded by $n^2$.
See~\cite{Gav:04} for more information. Hence in this case we have an
analogue of Theorem~\ref{t:core} where $\bigoplus_{\rho\in\Lambda(A)} V(A^t,\rho^t)$ must be replaced with
$V(A^t,1)$. So
\begin{equation}
\label{e:coremaxmin}
\core(A)=\bigoplus_{k\geq 1} V(A^k,1)
\end{equation}
in max-min algebra and in any other algebra where the sequence $\{A^t\}$ is ultimately periodic with
growth rate $1$ for all $A$.  Note that in max-min algebra each number is eigenvalue, however
the corresponding eigenvectors do not belong to the core unless they can also be associated with
the eigenvalue $1$.

\section{Action of a matrix on the core}
\label{s:coremax}

This section contains the main results of the present paper.
Here we study the mapping induced by a matrix on its core in
max algebra.

\subsection{Finite stabilization}
\label{ss:finite-stab}

We first extend the Cyclicity Theorem (Theorem~\ref{t:Cycl1})
to the case of a {\bf spectral index}, i.e., an index that belongs to a spectral class with $\rho_{\mu}=\rho(A)$. Note that in the case when $A$ is irreducible there is only one class, which is spectral, and therefore every index is spectral in this case.



\begin{theorem}
\label{t:cyclicity}
Let $A\in\Rpnn$, and let $j$ be an index in a spectral class.
Then the sequence of columns $\{A^t_{\cdot j}\}_{t\geq 1}$ is ultimately periodic.
\end{theorem}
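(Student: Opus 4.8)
The plan is to strip off the growth rate by reducing to the principal eigenvalue, and then to run a turnpike argument powered by the Cyclicity Theorem applied to the critical graph. Let $\nu$ be the spectral class containing $j$, say it is $(A,\rho)$-spectral with $\rho=\rhomax_\nu$. Using the matrix $\amrho$ from~\eqref{amrho}, which has principal eigenvalue $1$, and Theorem~\ref{t:reduction}(i), we have $(A^t)_{\mrho\mrho}=\rho^t\,((\amrho)^t)_{\mrho\mrho}$. Now only indices that access $\nu$ can reach $j$ along a path, and $\nu$ being $(A,\rho)$-spectral forces every such index into $\mrho$; since moreover $(\amrho)^t$ vanishes outside $\mrho\times\mrho$, we get $A^t_{\cdot j}=\rho^t\,((\amrho)^t)_{\cdot j}$ for every $t\ge 1$. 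It therefore suffices to prove ultimate periodicity with growth rate $1$ for $\amrho$, so from now on I assume $\rhomax(A)=1$ and that $j$ lies in a spectral class $\nu$ with $\rhomax_\nu=1$. A further harmless reduction is to restrict to the index set $\{i:i\to\nu\}$: any $i-j$ path stays inside it, so $A^t_{\cdot j}$ agrees on its support with the corresponding column of this principal submatrix, in which $\nu$ is the unique final class, every class has Perron root at most $1$, and all classes access $\nu$.

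The feature made available by spectrality is that no cycle lying on a path into $j$ has geometric mean exceeding $\rhomax_\nu=1$. Hence the columns are bounded, $A^t_{\cdot j}\le A^*_{\cdot j}$ entrywise (the Kleene star converges because $\rhomax(A)\le 1$), and the growth rate is exactly $1$. I would then establish the turnpike behaviour of optimal long paths: since mean-$1$ loops cost nothing while every other cycle strictly loses weight, for all large $t$ a maximum-weight $i-j$ path of length $t$ must enter $\crit(A)$, spend the bulk of its length as critical loops, and then descend to $j$, with the entering and leaving transients of bounded length. In this decomposition the only dependence on $t$ sits inside a power of the critical matrix $A_C$: by Lemma~\ref{l:critpowers} we have $(A_C)^t=(A^t)_C$, and applying the Cyclicity Theorem~\ref{t:Cycl1} to each strongly connected component of $\crit(A)$ (with Lemma~\ref{l:schur} to realize the exact residual length once it is large) shows that the powers $A_C^s$ are periodic with period $\sigma$, the cyclicity of $\crit(A)$.

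Feeding this back through the bounded, $t$-independent transient factors (columns $A^*_{\cdot c}$ of the kind appearing in Theorem~\ref{t:FVmaxalg}) yields $A^{t+\sigma}_{\cdot j}=A^t_{\cdot j}$ for all large $t$, equivalently $A^t_{\cdot j}\in\vmax(A^\sigma,1)$ eventually. Undoing the first reduction restores the factor $\rho^t$, so $\{A^t_{\cdot j}\}_{t\ge 1}$ is ultimately periodic with growth rate $\rho$ and period dividing $\sigma$.

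The main obstacle is the turnpike and length bookkeeping in the second step: one must show rigorously that for large $t$ the optimal path genuinely saturates on critical cycles, that the non-critical transient portions contribute only boundedly many, length-bounded alternatives, and that the residual critical length can be realized exactly for every large $t$ in the right residue class modulo $\sigma$. Spectrality of $\nu$ is exactly what excludes faster-growing detours and keeps the weights bounded, while Lemma~\ref{l:schur} supplies the exact-length realizability inside each primitive critical component; the delicate part is assembling these into a single period $\sigma$ that works simultaneously for all rows $i$.
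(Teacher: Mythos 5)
Your overall strategy is the paper's: normalize to $\rho=1$ via $\amrho$ and Theorem~\ref{t:reduction}(i), exploit spectrality to bound cycle means along access paths by $1$, use the critical graph together with Lemma~\ref{l:schur} for exact-length realizability, and recover the general period from the primitive critical case. The reduction $A^t_{\cdot j}=\rho^t((\amrho)^t)_{\cdot j}$ is correct, since every node on a path into $j$ accesses the spectral class $\nu$ and hence lies in $\mrho$. But the step you yourself flag as ``the main obstacle'' --- the turnpike claim that for all large $t$ an optimal $i$--$j$ path of length $t$ saturates on critical cycles, with $t$-independent transients --- is precisely the content of the theorem, and you do not prove it. Asserting that ``mean-$1$ loops cost nothing while every other cycle strictly loses weight'' is the right intuition, but it does not by itself rule out that the optimum for infinitely many $t$ is attained by paths avoiding the critical graph, nor does it show that the optimal weight becomes eventually \emph{equal} to a fixed constant rather than merely converging to one. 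So the proposal is an accurate plan with the central estimate missing.

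The paper closes exactly this gap with a two-sided bound. For the upper bound it splits the candidate paths into those passing through a critical node and those avoiding all critical nodes; repeated cycle deletion gives
$a_{ij}^{(t)}\leq w(\Pi_1)\oplus w(\Pi_2)\,\mu^{t-n}$,
where $w(\Pi_1)$ is the best weight of a length-$\leq 2(n-1)$ path through a critical node, $w(\Pi_2)$ the best weight of a short path avoiding critical nodes, and $\mu<1$ is the second-largest cycle geometric mean, so the second term dies out. For the lower bound, spectrality guarantees $\Pi_1\neq\emptyset$, and Lemma~\ref{l:schur} applied in a \emph{primitive} critical component produces, for every large $t$, a critical cycle of exactly the right length through the chosen critical node, so $a_{ij}^{(t)}=w(\Pi_1)$ eventually. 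Note also how the paper dodges the difficulty you raise about ``assembling a single period $\sigma$ simultaneously for all rows'': it first proves ultimate \emph{constancy} of the column under the extra hypothesis that $\crit(A)$ is primitive, and only then passes to $A^{\sigma}$ (using Lemmas~\ref{l:sameperron} and~\ref{l:critpowers} to see that its critical graph is primitive) and multiplies the resulting identity $A^{(l+1)\sigma}_{\cdot j}=A^{l\sigma}_{\cdot j}$ by $A^s$ to interpolate the intermediate powers. Your alternative of deducing periodicity directly from the powers of $A_C$ and ``feeding it back through transient factors'' is essentially the CSR route of~\cite{SerSch}; it can be made to work, but it requires the same kind of two-sided estimate and is not easier.
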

\begin{proof}
We can assume without loss of generality
that the greatest eigenvalue (i.e., the m.c.g.m.) of $A$ is
$1$. For now we will also assume that the critical graph $\crit(A)$ is primitive, and that the spectral class containing $j$ is associated with the eigenvalue $1$, i.e., with the
greatest eigenvalue. We will show how to omit these two assumptions in the end of the proof.

Let $i$ be an index with access to $j$. We show that
$a_{ij}^{(t)}$ is constant at all large enough $t$. Note that when
$i$ does not have access to $j$, we have $a_{ij}^{(t)}=0$ for all $t$.

Denote by $\Pi_1$  the set of paths that connect $i$ to $j$ via a critical index, with length
bounded by $2(n-1)$, and denote by $w(\Pi_1)$ the biggest weight of paths in $\Pi_1$.
Observe that since $j$ is in a spectral class,
there are paths connecting $i$ to $j$ via a critical index.
Indeed, there are critical nodes in the strongly connected component of $\digr(A)$ containing $j$. Any such critical node can be connected to $j$ by a path, and back.
The resulting cycle can be appended to the access path from $i$ to $j$, forming
a path that connects $i$ to $j$ via a critical index.

Denote by $\Pi_2$ the set of paths that connect $i$ to $j$ and do not traverse any critical index,
with length bounded by $n$, and by $w(\Pi_2)$ the biggest weight of paths in $\Pi_2$. Denote by $\mu$ the second largest cycle geometric mean in $A$,
taken among the simple cycles only, then $\mu<1$.

Let us first prove that
\begin{equation}
\label{e:weakcsr}
a_{ij}^{(t)}\leq w(\Pi_1)\oplus w(\Pi_2)\mu^{t-n}
\end{equation}
for all $t$.

{\bf Case 1.} Suppose that $a_{ij}^{(t)}$ is the weight of a path $P$ that connects $i$ to $j$ via a critical index $k$.
Then it can be decomposed as $P_1\circ P_2$ where $P_1$ connects $i$ to $k$ and $P_2$ connects $k$
to $j$. Repeatedly applying the cycle deletion  to $P_1$ and $P_2$ we obtain a simple path $P'_1$
connecting $i$ to $k$ and a simple path $P'_2$ connecting $k$ to $j$. We conclude that $P'_1\circ P'_2\in\Pi_1$,
and
$$
a_{ij}^{(t)}=w(P)=w(P_1\circ P_2)\leq w(P'_1\circ P'_2)\leq w(\Pi_1).
$$
Note that since there exist paths connecting $i$ to $j$ through a critical index,
the cycle deletion argument above implies that $\Pi_1$ is non-empty and $w(\Pi_1)>0$.

{\bf Case 2.} Suppose that $a_{ij}^{(t)}$ is the weight of a path $\Tilde{P}$ that connects $i$ to $j$ not
traversing any critical index. Repeatedly applying the cycle deletion to $\Tilde{P}$ we obtain a simple path
or a cycle $\Tilde{P}_2$ with length bounded by $n$ and weight bounded by $w(\Pi_2)$. Since we deleted only non-critical cycles with cycle mean
not exceeding $\mu$, we obtain
$$
a_{ij}^{(t)}=w(\Tilde{P})\leq w(\Tilde{P}_2)\mu^{t-l(\Tilde{P}_2)}\leq w(\Pi_2)\mu^{t-n}.
$$

So in this case we also have
$a_{ij}^{(t)}\leq w(\Pi_1)\oplus w(\Pi_2)\mu^{t-n}$ for all $t$.

We need to show that $a_{ij}^{(t)}=w(\Pi_1)$ for all large enough $t$.
The path attaining $w(\Pi_1)$ (composed of two simple paths)
goes through a critical index $k$, which lies in a primitive critical component. Then by Lemma~\ref{l:schur} for all large enough
$t$ there exist critical cycles of length $t$ passing through $k$, hence for all large enough
$t$ there exist paths connecting $i$ to $j$ via $k$ with weight $w(\Pi_1)$. This shows that $a_{ij}^{(t)}=w(\Pi_1)$
for all large enough $t$.

As $i$ and $j$ were chosen only with the restriction that $j$ is in a spectral class with
eigenvalue $1$, and $i$ has access to $j$,
it follows that all columns of $A^t$ with indices in a spectral class with m.c.g.m. $1$ are ultimately constant (that is, ultimately periodic with period $1$ and growth rate $1$) , when the critical graph of $A$ is primitive and $\rho(A)=1$.

Consider the general case, that is, the case of an index $j$ in
a general spectral class with general m.c.g.m. $\rho$ and with general
cyclicity $\sigma=\sigma_{\rho}$ of $\crit(A_{\rho})$.  We first
use the reduction to the powers of $A_{\rho}$, see Theorem~\ref{t:reduction}(i), where $1$ is the
greatest eigenvalue. Raising $A$ to the power $\sigma$ and using  Theorem~\ref{t:samespectrum} (ii), which shows that the set of indices in spectral classes corresponding to a given eigenvalue
does not depend on power, we reduce to the main case considered above. In particular,
by Lemmas~\ref{l:sameperron} and~\ref{l:critpowers} the critical graph
$\crit((A_{\rho})^{\sigma})$ is primitive.

We obtain that the sequence of the $j$th columns of the powers of $A^{\sigma}$ is ultimately periodic with period $1$ and growth rate $\rho^{\sigma}$,
which implies that the sequence of the $j$th columns
of the powers of $A$ is ultimately periodic with growth rate $\rho$ and period
at most $\sigma$. Indeed, multiplying the equality
$A_{\cdot j}^{(l+1)\sigma}=A_{\cdot j}^{l\sigma}$ by $A^s$ for any $s\geq 1$ from the left,
we get $A_{\cdot j}^{(l+1)\sigma+s}=A_{\cdot j}^{l\sigma+s}$ for all $s\geq 1$. The proof is complete.
\end{proof}

We will need the following observation on support of the vectors belonging to the
core.

\begin{proposition}
\label{p:top-spectral}
Let $z\in\core(A)$, then
\begin{itemize}
\item[(i)] For each class $\mu$ of $A^{\sigma}$, either $N_{\mu}\subseteq\supp z$,
or $N_{\mu}\cap\supp z=\emptyset$.
\item[(ii)] Consider the set of classes $\mu$ of $A^{\sigma}$
such that $N_{\mu}\subseteq\supp z$, and let $R_z$ be the subgraph of the marked reduced graph $R(A^{\sigma})$ consisting only of such classes and edges between them. Then in
$R_z$, all final classes are spectral.
\end{itemize}
\end{proposition}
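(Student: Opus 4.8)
The plan is to run both parts off the representation from Theorem~\ref{t:core}, namely $\core(A)=\bigoplus_{\rho\in\Lambda(A)} V(A^{\sigma},\rho^{\sigma})$ with $\sigma=\sigma_{\Lambda}$, so that any $z\in\core(A)$ is a max-combination $z=\bigoplus_k y^{(k)}$ of eigenvectors $y^{(k)}$ of $A^{\sigma}$, each associated with some eigenvalue $\rho^{\sigma}$. The whole argument rests on one structural fact: by Theorem~\ref{t:FVmaxalg}(i') --- applied to $A_{\rho}^{\sigma}$ via the reduction of Proposition~\ref{p:vamrho} and Theorem~\ref{t:reduction}(ii) to cover non-principal eigenvalues --- each extremal generator $x_{\maxmu}$ of $V(A^{\sigma},\rho^{\sigma})$ has support equal to the union of all classes of $A^{\sigma}$ that access the spectral class $\mu_0$ containing the critical component $\maxmu$. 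In particular the support of every eigenvector of $A^{\sigma}$ is a union of classes, and it is \emph{access-closed below $\mu_0$}, i.e.\ it contains every class that accesses $\mu_0$.

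For part (i), I would use that the support of a max-combination of nonnegative vectors is the union of the supports of its summands. Writing $z=\bigoplus_k y^{(k)}$, we get $\supp z=\bigcup_k \supp y^{(k)}$, and since each $\supp y^{(k)}$ is a union of classes of $A^{\sigma}$, so is $\supp z$. This gives at once the dichotomy $N_{\mu}\subseteq\supp z$ or $N_{\mu}\cap\supp z=\emptyset$.

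For part (ii), let $\mu$ be a final class of $R_z$, so $N_{\mu}\subseteq\supp z$. Since supports are class-saturated, some summand $y=y^{(k)}$ satisfies $N_{\mu}\subseteq\supp y$, and expanding $y$ in generators yields a generator $x_{\maxmu}$ with $N_{\mu}\subseteq\supp x_{\maxmu}$, whose support is exactly all classes accessing the spectral class $\mu_0$. Hence $\mu\to\mu_0$. The key point is that every class lying on a path from $\mu$ to $\mu_0$ also accesses $\mu_0$, so it too lies in $\supp x_{\maxmu}\subseteq\supp z$; thus the entire access path stays inside $R_z$. Were $\mu\neq\mu_0$, this would show $\mu$ accessing another class of $R_z$, contradicting finality. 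Therefore $\mu=\mu_0$, which is a spectral class of $A^{\sigma}$ by Theorem~\ref{t:FVmaxalg}(i'), proving that every final class of $R_z$ is spectral.

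The hard part will not be the combinatorial heart of (ii) --- that $\supp x_{\maxmu}$ contains the whole access-path to $\mu_0$ is immediate once the support description is in hand. Rather, the step requiring care is transferring Theorem~\ref{t:FVmaxalg} from the principal-eigenvalue setting to an arbitrary $\rho\in\Lambda(A)$: one must check, using Lemma~\ref{l:sameaccess} and the definition of $M_{\rho}$, that ``accessing $\mu_0$ in $A^{\sigma}$'' coincides with ``accessing it in $(A_{\rho})^{\sigma}$'', and that the indices outside $M_{\rho}$ (where $A_{\rho}$ is zero) contribute nothing to the supports. Once this alignment between access relations, spectral classes and supports across powers and eigenvalue-reductions is established, both parts follow as above.
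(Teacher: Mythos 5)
Your proof is correct and follows essentially the same route as the paper: both arguments rest on Theorem~\ref{t:FVmaxalg}(i') (transferred to arbitrary $\rho\in\Lambda(A)$ via $\amrho$) to describe the supports of the fundamental eigenvectors as access-closed unions of classes, and then pass to a general $z\in\core(A)$ using the fact that taking max-combinations corresponds to taking unions of supports. Your treatment of part (ii) for general $z$ --- locating a generator whose support contains the final class $\mu$ and tracking the access path to its spectral class inside $R_z$ --- is more explicit than the paper's one-line deduction, but it is the same idea.
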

\begin{proof}
Part (i) holds for each eigenvector of $A^{\sigma}$, and in particular,
for every fundamental eigenvector $(A_{\rho})^*_{\cdot i}$ where
$\rho\in\Lambda(A)$ and $i$ belongs to the corresponding critical graph.
Part (ii) also holds for every fundamental eigenvector, since in $R_z$ there is a unique
final class $\mu$ with $N_{\mu}\subseteq\supp z$ containing $i$, and it is spectral.

To deduce both claims for general $z\in\core(A)$, we notice that by
Theorem~\ref{t:core}, $\core(A)$ is generated by such fundamental eigenvectors, and that taking max-linear combinations corresponds to taking the union of supports.
\end{proof}

\begin{proposition}
\label{p:colsclasses}
\begin{itemize}
\item[(i)] If $i$ belongs to a spectral class of $A$ then $A^t_{\cdot i}\in\coremax(A)$ for
all large enough $t$;
\item[(ii)] If $i$ belongs to a non-trivial non-spectral class of $A$ then
$A^t_{\cdot i}\notin\core(A)$ for any $t$.
\end{itemize}
\end{proposition}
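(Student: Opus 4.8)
The plan is to prove the two parts with different tools: part~(i) is a direct consequence of the column periodicity of Theorem~\ref{t:cyclicity}, whereas part~(ii) is an argument by contradiction resting on the support description of core vectors in Proposition~\ref{p:top-spectral}. For part~(i), let $\rho=\rhomax_{\mu}$ be the Perron root of the spectral class $\mu$ containing $i$. By Theorem~\ref{t:cyclicity} (and the period/growth-rate bounds obtained in its proof) the sequence $\{A^t_{\cdot i}\}_{t\geq 1}$ is ultimately periodic with growth rate $\rho$ and period dividing $\sigma_{\rho}$, so for all large $t$ we have $A^{t+\sigma_{\rho}}_{\cdot i}=\rho^{\sigma_{\rho}}A^t_{\cdot i}$. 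Since $A^{t+\sigma_{\rho}}_{\cdot i}=A^{\sigma_{\rho}}\otimes A^t_{\cdot i}$, this says precisely that $A^t_{\cdot i}\in\vmax(A^{\sigma_{\rho}},\rho^{\sigma_{\rho}})$ for all large $t$. By the remark following Theorem~\ref{t:core} we have $\vmax(A^{\sigma_{\rho}},\rho^{\sigma_{\rho}})\subseteq\core(A)$, which gives~(i).

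For part~(ii), suppose for contradiction that $z:=A^t_{\cdot i}\in\core(A)$ for some $t$, where $i$ lies in a nontrivial non-spectral class $\mu$. The first task is to exhibit a node of $\mu$ in $\supp z$. By the path interpretation of matrix powers, $\supp z=\{k:\text{there is a }k\to i\text{ walk of length }t\}$. Because $\mu$ is nontrivial and strongly connected, every node of $\mu$, and $i$ in particular, has an incoming edge inside $\mu$; tracing edges backwards from $i$ for $t$ steps within $\mu$ produces a node $q\in N_{\mu}$ carrying a $q\to i$ walk of length $t$. Hence $q\in N_{\mu}\cap\supp z$, so $N_{\mu}\cap\supp z\neq\emptyset$ for every $t\geq 1$.

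Now pass to $A^{\sigma}$ with $\sigma=\sigma_{\Lambda}$ and let $\mu'$ be the class of $A^{\sigma}$ containing $q$; it is derived from $\mu$. By Proposition~\ref{p:top-spectral}(i) the whole of $N_{\mu'}$ lies in $\supp z$, so $\mu'$ is a class of the graph $R_z$. As $\mu$ is non-spectral, Theorem~\ref{t:samespectrum}(ii) shows that $\mu'$ is non-spectral as well. It remains to check that $\mu'$ is a final class of $R_z$. A class accessed by $\mu'$ is either another class derived from $\mu$---but by Lemma~\ref{l:sameperron} the blocks of $(A_{\mu\mu})^{\sigma}$ form a direct sum, so distinct classes derived from $\mu$ do not access one another---or a class lying strictly below $\mu$ in the reduced graph, whose nodes cannot reach $i$ and therefore lie outside $\supp z$. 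In either case no class of $R_z$ is accessed by $\mu'$, so $\mu'$ is final in $R_z$. This contradicts Proposition~\ref{p:top-spectral}(ii), which forces every final class of $R_z$ to be spectral; hence $z\notin\core(A)$.

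The step I expect to be most delicate is producing a node of $\mu$ in $\supp z$ for \emph{every} $t$, not merely for large $t$: since a nontrivial class need not carry a loop, $i$ itself may fail to lie in its own column support for a given $t$, so one genuinely needs the backward-walk construction to find some other node $q\in N_{\mu}$. The other point requiring care is the finality of $\mu'$, where both the direct-sum structure of $(A_{\mu\mu})^{\sigma}$ from Lemma~\ref{l:sameperron} and the impossibility of returning to $\mu$ from a strictly lower class are essential.
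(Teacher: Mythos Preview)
Your proof is correct and follows essentially the same strategy as the paper: Theorem~\ref{t:cyclicity} for part~(i) and Proposition~\ref{p:top-spectral} for part~(ii). Your argument for~(ii) is in fact more careful than the paper's terse version---the paper asserts that ``the only final class of $R_z$ is the one containing $i$'' without addressing the possibility that $i\notin\supp z$, whereas you explicitly construct the backward walk to locate $q\in N_\mu\cap\supp z$ and then verify finality of the derived class $\mu'$ in $R_z$.
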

\begin{proof}
(i): Follows by Theorem~\ref{t:cyclicity}.\\
(ii): We use Proposition~\ref{p:top-spectral}. For any $t$,
either the alternative ``$N_{\mu}\subseteq \supp(A^t_{\cdot i})$ or
$N_{\mu}\cap \supp(A^t_{\cdot i})=\emptyset$'' does not hold for some
$\mu$, or it holds but
if for $z=A^t_{\cdot i}$ we take the subgraph $R_z$ described in Proposition~\ref{p:top-spectral} part (ii), then the only final class of this subgraph
is the one containing $i$, and it is not spectral.
Thus we have $A^t_{\cdot i}\notin\core(A)$ for any $t$.
\end{proof}

\begin{theorem}
\label{t:core-finite}
Finite stabilization of $\coremax(A)$ occurs if and only if all nontrivial classes of $A$ are spectral.
\end{theorem}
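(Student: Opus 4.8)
The plan is to translate finite stabilization into a statement about the individual columns of $A^t$ and then to invoke Proposition~\ref{p:colsclasses}. Since $\core(A)=\bigcap_{s\geq 1}\spann(A^s)$, we always have $\core(A)\subseteq\spann(A^t)$, and $\spann(A^t)$ is generated by its columns $A^t_{\cdot i}$, $i=1,\dots,n$. As $\core(A)$ is a max cone, finite stabilization is therefore equivalent to the assertion that $A^t_{\cdot i}\in\core(A)$ for every $i$ and all large $t$ (taking $t$ beyond the finitely many per-column thresholds). Proposition~\ref{p:colsclasses} already disposes of the columns indexed by nontrivial classes, so the whole argument reduces to understanding the columns indexed by trivial classes.

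For the \emph{only if} part I would argue by contraposition. If some nontrivial class is non-spectral, pick an index $i$ in it. Proposition~\ref{p:colsclasses}(ii) gives $A^t_{\cdot i}\notin\core(A)$ for every $t$, while $A^t_{\cdot i}\in\spann(A^t)$ and $\core(A)\subseteq\spann(A^t)$. Hence $\spann(A^t)\neq\core(A)$ for every $t$, so finite stabilization cannot occur. This direction is immediate from Proposition~\ref{p:colsclasses}(ii).

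For the \emph{if} part, assume every nontrivial class is spectral. Columns $A^t_{\cdot i}$ with $i$ in a nontrivial class lie in $\core(A)$ for all large $t$ by Proposition~\ref{p:colsclasses}(i), so only the trivial-class columns remain. Here I would use the one-step decomposition $A^t_{\cdot i}=\bigoplus_{m}a_{mi}\,A^{t-1}_{\cdot m}$ (the sum running over the predecessors $m$ of $i$) and repeatedly expand only those terms whose index lies in a trivial class. The key structural observation is that distinct trivial-class nodes lie on no common cycle, as otherwise they would belong to a single nontrivial strongly connected class; hence the trivial indices induce an acyclic subgraph of $\digr(A)$. Consequently every chain of consecutive trivial predecessors is a simple path of fewer than $n$ nodes, and each branch of the expansion terminates after at most $n$ steps either at an index $m$ in a nontrivial (hence spectral) class, contributing a term $c\,A^{t-d}_{\cdot m}$ with $1\leq d\leq n$, or at a trivial index with no predecessor, contributing the zero vector. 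Thus for all large $t$ the vector $A^t_{\cdot i}$ is a finite max-combination of columns $A^{t-d}_{\cdot m}$ with $m$ nontrivial and $d\leq n$; each of these lies in $\core(A)$ by Proposition~\ref{p:colsclasses}(i) (the shift $d$ being bounded, a single threshold works), and since $\core(A)$ is a max cone, $A^t_{\cdot i}\in\core(A)$.

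The genuinely new point, and the step I expect to require the most care, is precisely this treatment of the trivial-class columns, which fall outside the scope of Proposition~\ref{p:colsclasses}. The acyclicity of the trivial part of $\digr(A)$ is what forces the expansion to terminate with bounded shift $d$ and bounded coefficients, so that Proposition~\ref{p:colsclasses}(i) can be applied uniformly for all large $t$. Once all columns of $A^t$ are shown to lie in $\core(A)$ for large $t$, we obtain $\spann(A^t)\subseteq\core(A)$ and hence $\spann(A^t)=\core(A)$, completing the \emph{if} part.
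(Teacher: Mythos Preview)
Your argument is correct and follows essentially the same route as the paper. The paper's proof also uses Proposition~\ref{p:colsclasses}(ii) for the ``only if'' direction, and for the ``if'' direction it likewise expresses $A^t_{\cdot i}$ for trivial $i$ as a max-combination of columns $A^{t-d}_{\cdot m}$ with $m$ in a spectral class and $d$ bounded by $n$; the paper packages this via ``direct access paths'' from spectral indices to $i$ through trivial nodes (your expansion chains read backward), with the same acyclicity observation bounding their length.
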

\begin{proof}
The ``only if'' part: If there is a nontrivial non-spectral class then the columns
$A^t_{\cdot i}$ with $i$ in that class never belong to $\coremax(A)$ by Proposition~\ref{p:colsclasses}.

The ``if'' part: Assume there are only spectral classes and trivial
classes. By Theorem~\ref{t:cyclicity}, there is an integer $T(A)$
such that all sequences $\{A^t_{\cdot i}\}_{t\geq T(A)}$, where $i$
is an index in a spectral class, are periodic. Now take $t\geq
T(A)+n$ and let $i$ be an index of a trivial class. Let $\{P_k,\
k=1,\ldots, r\}$ be all paths connecting certain indices $j_k$ in
spectral classes to $i$ with a property that in these paths $j_k$ is
not repeated and no other index than $j_k$ can belong to a
non-trivial class. Such paths will be called {\em direct access
paths}. Their length is bounded by $n-1$, for otherwise they contain
a cycle, and any cycle belongs to a non-trivial class. So indeed,
there is only a finite number of such paths. Let $l(P_k)$, $w(P_k)$
denote the length, resp. the weight of $P_k$, then for all $s$ for
which there are no direct access paths with length $t$ connecting
$s$ to $i$ we can express
\begin{equation}
\label{e:threads}
a_{si}^{(t)}=\bigoplus_{k=1}^r a_{sj_k}^{(t-l(P_k))} w(P_k).
\end{equation}
Theorem~\ref{t:cyclicity} implies that $\{a_{sj_k}^{(t-l(P_k))}\}_{t>l(P_k)}$ is periodic at $t\geq T(A)+l(P_k)$,
so every such sequence is periodic at $t\geq T(A)+n$.
There are no direct access paths of length $t\geq n$, and hence of length
$t\geq T(A)+n$. Since for $t\geq T(A)+n$ all columns $A_{\cdot j_k}^{t-l(P_k)}$
are in $\core(A)$, using~\eqref{e:threads} we see that also $A^t_{\cdot i}\in\coremax(A)$ for $t\geq T(A)+n$.
\end{proof}

\subsection{Robustness}
\label{ss:robust}

We characterize the classes of robust and orbit periodic matrices
($\goodclass_3$ and $\goodclass_4$ of Subsect.~\ref{ss:maxalg-easy}) by means of the
restriction of these properties to the core.

Matrix $A$ is called {\bf core robust} if for any $x\in\coremax(A)\backslash\{0\}$ (instead of any
$x\in\Rpn\backslash\{0\}$) the orbit
$\{A^tx,\ t\geq 1\}$ hits an eigenvector of $A$ in finite number of steps.

\begin{theorem}
\label{t:robust-core}
$A\in\Rpnn$ with no zero columns is robust if and only if it is core robust and the core finitely stabilizes.
\end{theorem}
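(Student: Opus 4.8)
The plan is to prove the two implications separately, exploiting the fact that \emph{core robustness} is literally the restriction of \emph{robustness} to starting vectors in $\coremax(A)$, so that the forward implication is almost immediate once finite stabilization is secured, while the reverse implication is where finite stabilization earns its keep. Throughout I would use two facts already available: every (genuine) eigenvector of $A$ lies in $\coremax(A)$, since $Av=\rho v$ with $\rho>0$ gives $v=\rho^{-t}A^t v\in\spann(A^t)$ for all $t$; and, because $A$ has no zero columns, there are no eigenvectors with eigenvalue $0$ (an equation $Ax=0$ with $x\neq 0$ would force a zero column).

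For the forward direction, assume $A$ is robust. Core robustness is then inherited at once, since $\coremax(A)\backslash\{0\}\subseteq\Rpn\backslash\{0\}$ and the orbit condition is imposed on a larger set of starting vectors. To obtain finite stabilization I would argue by contradiction through Theorem~\ref{t:core-finite}: if the core did not finitely stabilize, there would exist a nontrivial non-spectral class $\mu$. Choosing an index $i\in N_{\mu}$ and the unit starting vector $e_i$, the orbit is exactly the column sequence $\{A^t_{\cdot i}\}_{t\geq 1}$, which stays nonzero because $A$ has no zero columns. By Proposition~\ref{p:colsclasses}(ii) this column never lies in $\coremax(A)$, whence it is never an eigenvector of $A$; thus the orbit of $e_i$ can never hit an eigenvector, contradicting robustness. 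Hence all nontrivial classes are spectral, and the core finitely stabilizes.

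For the reverse direction, assume $A$ is core robust and that $\coremax(A)$ finitely stabilizes, so there is a $T$ with $\spann(A^t)=\coremax(A)$ for all $t\geq T$. Given an arbitrary nonzero $x\in\Rpn$, the vector $A^Tx$ is a max-combination of the columns of $A^T$, hence lies in $\spann(A^T)=\coremax(A)$; moreover $A^Tx\neq 0$, since the no-zero-column hypothesis propagates to every power (each node has an in-edge, so each node is the endpoint of a path of any prescribed length), and a matrix with no zero columns sends nonzero vectors to nonzero vectors. Applying core robustness to $y:=A^Tx\in\coremax(A)\backslash\{0\}$, the orbit $\{A^ty\}_{t\geq 1}=\{A^{t+T}x\}_{t\geq 1}$ hits an eigenvector of $A$; as this is a tail of the orbit of $x$, the orbit of $x$ itself hits an eigenvector. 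Since $x$ was arbitrary, $A$ is robust.

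The main obstacle is the reverse implication, and specifically the bookkeeping around nonzero orbits: finite stabilization only yields $\spann(A^t)=\coremax(A)$, so one must check both that $A^Tx$ genuinely lands in the core and that it has not degenerated to $0$ before core robustness can be invoked, which is precisely where the no-zero-column assumption is essential. The forward direction, by contrast, hinges on correctly identifying the orbit of $e_i$ with the $i$-th column sequence and on combining the dichotomy of Proposition~\ref{p:colsclasses} with the characterization of finite stabilization in Theorem~\ref{t:core-finite}.
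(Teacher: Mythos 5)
Your proof is correct, and the reverse implication is word-for-word the paper's argument: finite stabilization pushes $A^Tx$ into the core, the no-zero-columns hypothesis keeps it nonzero, and core robustness finishes. The only place you diverge is the finite-stabilization half of the forward direction. The paper argues directly: since the orbit of $e_i$ is the column sequence $\{A^t_{\cdot i}\}$, robustness forces each such orbit to converge to an eigenvector of $A$, and eigenvectors lie in $\core(A)$, so $\spann(A^t)\subseteq\core(A)$ for large $t$ and equality follows from the standing inclusion $\core(A)\subseteq\spann(A^t)$. You instead run the contrapositive through the heavier machinery of Theorem~\ref{t:core-finite} and Proposition~\ref{p:colsclasses}(ii): a failure of finite stabilization produces a nontrivial non-spectral class whose columns never enter the core, hence never become eigenvectors. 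Both arguments are sound and hinge on the same identification of unit-vector orbits with column sequences; the paper's version is shorter and self-contained, while yours buys a little extra explicitness about \emph{which} starting vectors witness the failure of robustness (and you are right to dispose of eigenvalue~$0$ via the no-zero-columns hypothesis, a point the paper leaves implicit).
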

\begin{proof}
Evidently, core robustness is necessary.
For the finite stabilization observe that the sequence of
columns $\{A^t_{\cdot i},\; t\geq 0\}$ is the orbit of the $i$th unit vector, and hence if $A$
is robust then this orbit converges to an eigenvector of $A$, so
$A_{\cdot i}^t\in\core(A)$ for all $i$ and sufficiently large $t$.

The properties are also sufficient. Indeed, any orbit $\{A^t x,\;t\geq 0\}$, for $x\neq 0$, first gets to
a vector of the core by the finite stabilization
property, and this vector is nonzero since $A$ does not have zero columns. Then the orbit
hits an eigenvector of $A$ in a finite number of steps, by the core robustness.
\end{proof}

\begin{theorem}
\label{t:core-robust}
$A$ is core robust if and only if
\begin{itemize}
\item[(i)] the mapping induced by $A$ on the scaled extremals of its core
is identity; i.e., $\core(A)$ is generated by the eigenvectors of $A$.
\item[(ii)] $x\in \vmax(A,\rho_1)\backslash\{0\}$, $y\in \vmax(A,\rho_2)\backslash\{0\}$ and $\rho_1<\rho_2$ imply that $\supp(x)\subseteq\supp(y)$.
\end{itemize}
\end{theorem}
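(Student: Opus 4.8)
The plan is to read off the structure of $\coremax(A)$ from Theorem~\ref{t:core}, combine it with the fact (Lemma~\ref{l:actioncore}(iii)) that $A$ permutes the scaled extremals of $\coremax(A)$, and exploit the elementary principle that an eigenvector of $A$ has all its nonzero coordinates growing at one common rate under the iterated action of $A$. For the necessity of (i), I would use the permutation of scaled extremals directly: if $z$ is a scaled extremal lying in a permutation cycle $z_0,\ldots,z_{p-1}$, then $A^t z = c_t\, z_{t \bmod p}$ for positive scalars $c_t$ (nonzero because extremals map to extremals), and such a multiple is an eigenvector of $A$ exactly when $z_{t\bmod p}$ is a fixed point of the permutation, i.e.\ an eigenvector itself. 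Hence if any cycle had length $p>1$, the orbit of an extremal in that cycle would never hit an eigenvector, contradicting core robustness. So every cycle has length $1$, the permutation is the identity, and each extremal is an eigenvector; since the core is generated by its extremals, this is precisely (i).

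For the necessity of (ii) I would argue by contraposition. Suppose $x\in\vmax(A,\rho_1)\setminus\{0\}$, $y\in\vmax(A,\rho_2)\setminus\{0\}$ with $\rho_1<\rho_2$, but some index $i\in\supp(x)\setminus\supp(y)$. Set $z:=x\oplus y\in\coremax(A)$. Because $x,y$ are eigenvectors, $A^t z=\rho_1^t x\oplus\rho_2^t y$ holds exactly, so the $i$th coordinate equals $\rho_1^t x_i$ and grows at rate $\rho_1$, while for any $j\in\supp(y)$ the $j$th coordinate is eventually $\rho_2^t y_j$ and grows at the strictly larger rate $\rho_2$. If the orbit hit an eigenvector $v=A^{t_0}z$ with eigenvalue $\lambda$, then $A^t z=\lambda^{\,t-t_0}v$ for $t\ge t_0$, forcing every coordinate in $\supp(v)$ to grow at the single rate $\lambda$; but both $i$ and $j$ lie in $\supp(v)$, contradicting $\rho_1\ne\rho_2$. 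This yields (ii).

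For sufficiency, assume (i) and (ii) and take nonzero $z\in\coremax(A)$. By (i) we may write $z=\bigoplus_\rho w_\rho$, where $w_\rho\in\vmax(A,\rho)$ is the max-combination of those extremal eigenvectors of eigenvalue $\rho$ occurring in $z$ (this is again an eigenvector since $\vmax(A,\rho)$ is a max cone). Then $A^t z=\bigoplus_\rho \rho^t w_\rho$. Let $\rho_{\max}$ be the largest eigenvalue with $w_{\rho_{\max}}\ne 0$. Condition (ii) gives $\supp(w_\rho)\subseteq\supp(w_{\rho_{\max}})$ for every $\rho$, so $(A^t z)_i=0$ whenever $i\notin\supp(w_{\rho_{\max}})$, while for $i\in\supp(w_{\rho_{\max}})$ the term $\rho_{\max}^t (w_{\rho_{\max}})_i$ is positive and carries the strictly largest growth rate, hence dominates the finitely many subdominant terms once $t$ is large. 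Taking $t$ past the maximum of these finitely many coordinatewise thresholds gives $A^t z=\rho_{\max}^t w_{\rho_{\max}}$, an eigenvector of $A$; thus the orbit hits an eigenvector and $A$ is core robust.

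The main obstacle I anticipate lies in the sufficiency step: condition (ii) is exactly what forbids any coordinate from persisting at a subdominant rate, so that $A^t z$ collapses \emph{exactly} onto the top eigencomponent $\rho_{\max}^t w_{\rho_{\max}}$ rather than merely approaching its direction asymptotically. The delicate points are making the ``for large $t$'' uniform across all coordinates and correctly treating the case where several extremals share the top eigenvalue, so that $w_{\rho_{\max}}$ is a genuine max-combination rather than a single extremal.
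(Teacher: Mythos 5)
Your proposal is correct and follows essentially the same route as the paper's proof: necessity of (i) via the permutation on scaled extremals (Lemma~\ref{l:actioncore}(iii)) having a nontrivial cycle whose orbit never hits an eigenvector, necessity of (ii) via the coordinatewise growth rates of $A^t(x\oplus y)$, and sufficiency by decomposing a core vector into eigencomponents and letting the top eigenvalue dominate thanks to the support inclusion. The only difference is cosmetic --- you spell out the cycle argument and the uniform threshold slightly more explicitly than the paper does.
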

\begin{proof}
Both (i) and (ii) are necessary. Indeed, the mapping induced by $A$ on the
scaled extremals of the core is a bijection, so the orbits of all extreme rays are periodic.
If (i) does not hold then there are extreme rays with period
greater than $1$, and they never stabilize.  If (ii) does not hold, then no orbit of any combinations $\alpha x\oplus\beta y$ with
$\alpha>0$ and $\beta>0$ and $x,y$ as in (ii) ever converges to an eigenvector.
Indeed, we have
\begin{equation}
A^t(\alpha x\oplus\beta y)=\alpha A^t x\oplus\beta A^t y =
\alpha \rho_1^t x\oplus \beta\rho_2^t y.
\end{equation}
For $i\in\supp x\backslash\supp y$ we have
$A^t(\alpha x\oplus\beta y)_i=\alpha\rho_1^t x_i$ and for
$i\in\supp y$ we have $A^t(\alpha x\oplus \beta y)_i=
\alpha\rho_1^t x_i\oplus \beta\rho_2^t y_i=\beta\rho_2^t y_i$ for all large
enough $t$. Hence an eigenvector will never be reached, a contradiction.

Let (i) and (ii) hold, then any $y\in\coremax(A)$ can be written as $y=\bigoplus_{\mu}\alpha_{\mu} x_{\mu},$ where
$x_{\mu}$ are eigenvectors corresponding to the eigenvalues
$$
\rho_1\leq\rho_2\leq\ldots\leq\rho_l=\ldots=\rho_m.
$$
It follows that the union of all supports corresponding to the vectors with the largest eigenvalue
contains all other supports. Hence
$$
A^ty=\bigoplus_{\mu=1}^m \alpha_{\mu}\rho_{\mu}^t x_{\mu}=
\rho_m^t\bigoplus_{\mu=l}^m \alpha_{\mu} x_{\mu}
$$
at all large enough $t$, hence $A$ is core robust.
\end{proof}

Now we show how to deduce the characterization of robust matrices obtained
by Butkovi\v{c}, Cuninghame-Green, Gaubert~\cite{BCG-09}.

\begin{theorem}[Butkovi{\v{c}} et al.~\cite{BCG-09}]
$A\in\Rpnn$ with no zero columns is robust if and only if the following
conditions hold:
\begin{itemize}
\item[(i)] All nontrivial classes of Frobenius normal form are spectral;
\item[(ii)] The critical graphs associated with all $\rho\in\Lambda(A)$
are primitive;
\item[(iii)] For any two classes $\mu$ and $\nu$, if both
$\mu\not\to\nu$ and $\nu\not\to\mu$ then $\rho_{\mu}=\rho_{\nu}$.
\end{itemize}
\end{theorem}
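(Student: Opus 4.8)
The plan is to derive this characterization purely from the three results just established, translating their conditions into the combinatorial language of the reduced graph. Since $A$ has no zero columns, Theorem~\ref{t:robust-core} reduces robustness of $A$ to the conjunction of \emph{core robustness} and \emph{finite stabilization} of $\core(A)$. Theorem~\ref{t:core-finite} identifies finite stabilization with condition~(i), so (i) is immediately accounted for, and it remains to show that core robustness, under the standing assumption~(i), is equivalent to (ii)$\wedge$(iii). For this I would invoke Theorem~\ref{t:core-robust}, which splits core robustness into the \emph{generation condition} (``$\core(A)$ is generated by the eigenvectors of $A$'', equivalently the induced map on scaled extremals is the identity) and the \emph{support-nesting condition}.

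First I would match the generation condition with~(ii). Assuming finite stabilization, Theorem~\ref{t:core} gives $\core(A)=\bigoplus_{\rho\in\Lambda(A)} V(A^{\sigma_{\rho}},\rho^{\sigma_{\rho}})$, while Theorem~\ref{t:girls} shows that $\{V(A^t,\rho^t)\}_t$ is periodic with period exactly $\sigma_{\rho}$ and $V(A,\rho)\subseteq V(A^{\sigma_{\rho}},\rho^{\sigma_{\rho}})$. Since an eigenvector of $A^{\sigma_{\rho}}$ at $\rho^{\sigma_{\rho}}$ can only decompose into $A$-eigenvectors at eigenvalue $\rho$, the core is generated by eigenvectors of $A$ precisely when $V(A^{\sigma_{\rho}},\rho^{\sigma_{\rho}})=V(A,\rho)$ for every $\rho$, i.e.\ when $\sigma_{\rho}=1$ for all $\rho\in\Lambda(A)$. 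As $\sigma_{\rho}$ is the cyclicity of the critical graph associated with $\rho$, this is exactly primitivity of all these critical graphs, which is condition~(ii).

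The core of the argument is matching the support-nesting condition with~(iii). Here I would use the support description of Theorem~\ref{t:FVmaxalg}(i'): via the reduction of Proposition~\ref{p:vamrho}, a fundamental eigenvector attached to a spectral class $\nu$ has support $S_{\nu}=\{j : j\to\nu\}$, and supports of general eigenvectors at a fixed eigenvalue are unions of such sets. Thus support-nesting is equivalent to $S_{\mu}\subseteq S_{\nu}$ for all spectral $\mu,\nu$ with $\rho_{\mu}<\rho_{\nu}$. If~(iii) holds, two such classes are comparable, and $\nu\to\mu$ is impossible because spectrality of $\mu$ would force $\rho_{\nu}\le\rho_{\mu}$; hence $\mu\to\nu$, giving $S_{\mu}\subseteq S_{\nu}$. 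Conversely, if $\mu,\nu$ are incomparable spectral classes with $\rho_{\mu}<\rho_{\nu}$, then $\mu\in S_{\mu}\subseteq S_{\nu}$ would force $\mu\to\nu$, a contradiction; so incomparable spectral classes share their Perron root. Chaining the three equivalences then yields robust $\iff$ (i)$\wedge$(ii)$\wedge$(iii).

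The step I expect to be the main obstacle is exactly this last translation, and within it the treatment of the \emph{trivial} classes. The support/access reformulation of support-nesting only constrains classes that carry support of some core vector, i.e.\ those accessing a spectral class; a trivial class with zero Perron root accessing no spectral class lies outside every core support and is invisible to core robustness, yet still enters the symmetric statement~(iii). One must therefore check carefully, using condition~(i) together with the ``no zero columns'' hypothesis, how such classes are to be reconciled with~(iii), passing from the directed access condition extracted from Theorem~\ref{t:FVmaxalg}(i') to the symmetric incomparability formulation. By contrast, the matching of the generation condition with~(ii), and the bookkeeping that reduces a general eigenvalue to the principal one through Proposition~\ref{p:vamrho} and Theorem~\ref{t:reduction}, are routine once the periodicity of Theorem~\ref{t:girls} is in hand.
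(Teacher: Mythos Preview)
Your proposal follows exactly the paper's route: robustness is decomposed into finite stabilization plus core robustness via Theorem~\ref{t:robust-core}; finite stabilization is identified with~(i) by Theorem~\ref{t:core-finite}; and core robustness is split by Theorem~\ref{t:core-robust} into the generation condition, matched with~(ii) through the eigencone periodicity of Theorem~\ref{t:girls-general}, and the support-nesting condition, matched with~(iii) via the support description of Theorem~\ref{t:FVmaxalg}(i'). For this last equivalence the paper proceeds just as you outline, passing through the intermediate directed condition (iii')$\colon$ ``$\rho_{\mu}<\rho_{\nu}\Rightarrow\mu\to\nu$ for all classes $\mu,\nu$'' and then linking (iii') to support-nesting via fundamental eigenvectors; your flagged obstacle about reconciling trivial classes with the symmetric formulation~(iii) is genuine, and the paper's own treatment of the step (iii)$\Leftrightarrow$(iii') is equally terse, simply invoking the spectral-class description of Theorem~\ref{t:spectrum}.
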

\begin{proof}
By Theorem~\ref{t:core-finite}, part~(i) is equivalent to finite stabilization of
$\core(A)$. In view of Theorems~\ref{t:robust-core} and Theorem~\ref{t:core-robust}, it is sufficient to show that part~(ii) is equivalent with
Theorem~\ref{t:core-robust}(i), and part~(iii) is equivalent with
Theorem~\ref{t:core-robust}(ii).

For the first equivalence, we observe that
 $V(A^{\sigma_{\Lambda}})=V(A)$ if and only if
$\sigma_{\Lambda}=1$. Indeed, the ``if'' part is trivial, and for the ``only if'' part
recall that $V(A)\subseteq V(A^t)\subseteq V(A^{\sigma_{\Lambda}})$ for all $t\geq 1$ so that
if $V(A^{\sigma_{\Lambda}})=V(A)$ then the period of the sequence $\{V(A^t)\}_{t\geq 1}$
is $1$, which equals $\sigma_{\Lambda}$ by Theorem~\ref{t:girls-general}.

For the second
equivalence, observe using the description of spectral classes of
Theorem~\ref{t:spectrum}, that part~(iii) of the present theorem is equivalent to the following
condition:\\
(iii'): For any two classes $\mu$ and $\nu$, $\rho_{\mu}<\rho_{\nu}$ implies $\mu\to\nu$.\\
It suffices to show that this condition is equivalent to the one of
Theorem~\ref{t:core-robust}(ii).

For this, applying Theorem~\ref{t:FVmaxalg}(i'), observe that (iii') holds if and only if
for any pair of (spectral) classes $\mu$ and $\nu$ with $\rho_{\mu}<\rho_{\nu}$ and for any
pair of critical components $\Tilde{\mu}$ in $\mu$ and $\Tilde{\nu}$ in $\nu$, the associated fundamental
eigenvectors satisfy $\supp(x_{\Tilde{\mu}})\subseteq \supp(y_{\Tilde{\nu}})$.

This already shows that if (iii') is violated then Theorem~\ref{t:core-robust}(ii) does
not hold. For the converse, we argue that
if $x\in V(A,\rho_1)$ and $y\in V(A,\rho_2)$, where $x,y\neq 0$ and $\rho_1<\rho_2$, then $x$, resp. $y$
are combinations of fundamental eigenvectors $x_{\Tilde{\mu}}$, resp. $y_{\Tilde{\nu}}$
satisfying $\supp(x_{\Tilde{\mu}})\subseteq \supp(y_{\Tilde{\nu}})$. As $\supp(x)$, resp.
$\supp(y)$ is the union of supports of such $x_{\Tilde{\mu}}$, resp. $y_{\Tilde{\nu}}$,
the inclusion $\supp(x)\subseteq\supp(y)$ follows.
\end{proof}

Recall the notion of orbit periodicity defined in Subsection~\ref{ss:maxalg-easy}:
$A$ is called orbit periodic if the orbit of any vector $x\in\Rpn\backslash\{0\}$
is ultimately periodic, which happens if and only if it hits an eigenvector of $A^{\sigma_{\Lambda}}$.
The core restriction of orbit periodicity is defined as follows: $A$
is called {\em core periodic} if the orbit of any vector $x\in\coremax(A)\backslash\{0\}$
is ultimately periodic, i.e., it hits an eigenvector
of $A^{\sigma_{\Lambda}}$.

Observe that $A$ is orbit periodic
if and only if $A^{\sigma}$ is robust, and this also holds for the core restrictions (more generally, for
any restrictions) of both notions. The next observations are analogous to
Theorems~\ref{t:robust-core} and~\ref{t:core-robust}.

\begin{theorem}
\label{t:periodic-core}
$A\in\Rpnn$ with no zero columns is orbit periodic if and only if it is core periodic and the core finitely stabilizes.
\end{theorem}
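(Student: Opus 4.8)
The plan is to mirror the proof of Theorem~\ref{t:robust-core}, replacing ``robust'' by ``orbit periodic'' and ``eigenvector of $A$'' by ``eigenvector of $A^{\sigma_{\Lambda}}$'' throughout. (Alternatively, one could apply Theorem~\ref{t:robust-core} to $A^{\sigma_{\Lambda}}$, invoking the remark that $A$ is orbit periodic, resp. core periodic, precisely when $A^{\sigma_{\Lambda}}$ is robust, resp. core robust; I prefer the direct argument, since it avoids having to compare $\coremax(A)$ with $\coremax(A^{\sigma_{\Lambda}})$.)

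First I would establish necessity. Core periodicity is immediate: since $\coremax(A)\backslash\{0\}\subseteq\Rpn\backslash\{0\}$, if every orbit starting in $\Rpn\backslash\{0\}$ is ultimately periodic then so is every orbit starting in the core. For finite stabilization I would note that the $i$th column sequence $\{A^t_{\cdot i}\}_{t\geq 1}$ is exactly the orbit of the unit vector $e_i$. Orbit periodicity forces this orbit to hit an eigenvector of $A^{\sigma_{\Lambda}}$ in finitely many steps; by Theorem~\ref{t:core} such an eigenvector lies in $\coremax(A)$, whence $A^t_{\cdot i}\in\coremax(A)$ for all large $t$ and every $i$. Since $\coremax(A)$ is a max cone, this yields $\spann(A^t)\subseteq\coremax(A)$ for all large $t$; combined with the always-valid inclusion $\coremax(A)\subseteq\spann(A^t)$, we get $\spann(A^t)=\coremax(A)$ for all large $t$, i.e. the core finitely stabilizes.

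For sufficiency, I take any $x\neq 0$ and follow its orbit. By finite stabilization, $A^t x\in\spann(A^t)=\coremax(A)$ for all large $t$, and this vector is nonzero because $A$ has no zero columns (so $A^t x\neq 0$ whenever $x\neq 0$: choosing $x_i>0$ and $j$ with $a_{ji}>0$ gives $(Ax)_j\geq a_{ji}x_i>0$, and one iterates). Thus the orbit eventually enters $\coremax(A)\backslash\{0\}$, and core periodicity then makes its tail ultimately periodic, i.e. it hits an eigenvector of $A^{\sigma_{\Lambda}}$. Hence the whole orbit of $x$ is ultimately periodic, so $A$ is orbit periodic.

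I do not expect a genuine obstacle here; the only points requiring care are (a) that eigenvectors of $A^{\sigma_{\Lambda}}$ belong to the core, which is exactly Theorem~\ref{t:core} (together with $\Lambda(A^{\sigma_{\Lambda}})=\{\rho^{\sigma_{\Lambda}};\ \rho\in\Lambda(A)\}$ from Theorem~\ref{t:samespectrum}, so that \emph{every} eigenvector of $A^{\sigma_{\Lambda}}$ is captured by $V^{\Sigma}(A^{\sigma_{\Lambda}})$), and (b) that the no-zero-columns hypothesis keeps orbits nonzero, exactly as in Theorem~\ref{t:robust-core}. The argument is therefore a routine adaptation of the robust case.
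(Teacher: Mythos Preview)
Your proposal is correct and matches the paper's approach exactly: the paper's proof of Theorem~\ref{t:periodic-core} consists of the single sentence ``Follows the lines of the proof of Theorem~\ref{t:robust-core},'' and you have carried out precisely that adaptation. The extra care you take in points (a) and (b) --- invoking Theorem~\ref{t:core} together with Theorem~\ref{t:samespectrum} to place every eigenvector of $A^{\sigma_{\Lambda}}$ in $\coremax(A)$, and spelling out why the no-zero-columns hypothesis keeps $A^t x\neq 0$ --- is appropriate and makes the argument self-contained.
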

\begin{proof}
Follows the lines of the proof of Theorem~\ref{t:robust-core}.
\end{proof}

\begin{theorem}
\label{t:core-periodic}
$A\in\Rpnn$ is core periodic if and only if
$x\in \vmax(A^{\sigma_{\Lambda}},\rho_1^{\sigma_{\Lambda}})\backslash\{0\}$, $y\in \vmax(A^{\sigma_{\Lambda}},\rho_2^{\sigma_{\Lambda}})\backslash\{0\}$ with $\rho_1,\rho_2\in\Lambda(A)$ and $\rho_1<\rho_2$ imply that
$\supp(x)\subseteq\supp(y)$.
\end{theorem}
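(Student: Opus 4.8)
The plan is to mimic the structure of the proof of Theorem~\ref{t:core-robust}, but now working with $A^{\sigma_{\Lambda}}$ in place of $A$, and exploiting the observation (stated just before the theorem) that $A$ is core periodic if and only if $A^{\sigma_{\Lambda}}$ is core robust. Indeed, by definition the orbit $\{A^t x\}_{t\geq 1}$ of a core vector $x$ is ultimately periodic precisely when it hits an eigenvector of $A^{\sigma_{\Lambda}}$, and the orbit under $A$ passing through such an eigenvector is the same as saying the orbit $\{(A^{\sigma_{\Lambda}})^s x\}_{s\geq 1}$ hits an eigenvector of $A^{\sigma_{\Lambda}}$. So the condition ``core periodic for $A$'' translates literally into ``core robust for $B:=A^{\sigma_{\Lambda}}$,'' provided we check that $\core(A)=\core(B)$; this last equality follows from Theorem~\ref{t:core} together with~\eqref{e:vasigma}, since the core is $V^{\Sigma}(A^{\sigma_{\Lambda}})$ and raising to the power $\sigma_{\Lambda}$ does not change the Minkowski sum of all eigencones.

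Granting this reduction, I would simply apply Theorem~\ref{t:core-robust} to $B=A^{\sigma_{\Lambda}}$. That theorem gives two conditions: (i) the induced mapping on scaled extremals of $\core(B)$ is the identity, i.e., $\core(B)$ is generated by eigenvectors of $B$; and (ii) the support-nesting condition $\supp(x)\subseteq\supp(y)$ whenever $x\in \vmax(B,\rho_1^{\sigma_{\Lambda}})\setminus\{0\}$, $y\in \vmax(B,\rho_2^{\sigma_{\Lambda}})\setminus\{0\}$ and $\rho_1^{\sigma_{\Lambda}}<\rho_2^{\sigma_{\Lambda}}$. The key point is that condition~(i) is \emph{automatic} for $B=A^{\sigma_{\Lambda}}$: by Theorem~\ref{t:core}, $\core(A)=V^{\Sigma}(A^{\sigma_{\Lambda}})$ is generated by eigenvectors of $A^{\sigma_{\Lambda}}$, so the induced mapping on scaled extremals of the core already fixes each of them. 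Hence only condition~(ii) survives, and since $\rho\mapsto\rho^{\sigma_{\Lambda}}$ is strictly increasing on $\Rp$, the inequality $\rho_1^{\sigma_{\Lambda}}<\rho_2^{\sigma_{\Lambda}}$ is equivalent to $\rho_1<\rho_2$. This yields exactly the stated condition.

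I would present this as two implications. For necessity, suppose core periodicity fails to produce the support inclusion: then there exist $x,y$ with $\rho_1<\rho_2$ and an index $i\in\supp(x)\setminus\supp(y)$, and the same computation as in Theorem~\ref{t:core-robust} shows that on the combination $\alpha x\oplus\beta y$ the $i$th coordinate stays $\alpha\rho_1^{t} x_i$ while coordinates in $\supp(y)$ are eventually dominated by $\beta\rho_2^{t} y_i$, so the scaled orbit never becomes periodic --- contradicting core periodicity. For sufficiency, any $y\in\core(A)$ decomposes as $\bigoplus_{\mu}\alpha_{\mu} x_{\mu}$ with $x_{\mu}\in \vmax(A^{\sigma_{\Lambda}},\rho_{\mu}^{\sigma_{\Lambda}})$ eigenvectors ordered by $\rho_1\leq\cdots\leq\rho_m$; the support condition forces the supports of the top-eigenvalue eigenvectors to absorb all the others, so that for large $t$ only the $\rho_m^{\sigma_{\Lambda}}$-scaled part survives, making the orbit periodic with growth rate $\rho_m$.

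The main obstacle I anticipate is the bookkeeping in the reduction step, specifically verifying cleanly that $\core(A)=\core(A^{\sigma_{\Lambda}})$ and that eigenvectors of $A^{\sigma_{\Lambda}}$ at eigenvalue $\rho^{\sigma_{\Lambda}}$ are exactly the pieces generating the core, so that condition~(i) of Theorem~\ref{t:core-robust} is vacuous for $B=A^{\sigma_{\Lambda}}$; this relies on Theorem~\ref{t:core} and~\eqref{e:vasigma} and on $A$ and $A^{\sigma_{\Lambda}}$ having the same spectral structure via Theorem~\ref{t:samespectrum}. The remaining arguments are direct transcriptions of the Theorem~\ref{t:core-robust} proof with $A$ replaced by $A^{\sigma_{\Lambda}}$, so the authors may well just write ``the proof follows the lines of Theorem~\ref{t:core-robust}'' after noting the reduction.
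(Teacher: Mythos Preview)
Your proposal is correct and follows essentially the same approach as the paper: reduce core periodicity of $A$ to core robustness of $A^{\sigma_{\Lambda}}$, observe that condition~(i) of Theorem~\ref{t:core-robust} is automatic for $A^{\sigma_{\Lambda}}$ since $\core(A)=\core(A^{\sigma_{\Lambda}})$ is generated by eigenvectors of $A^{\sigma_{\Lambda}}$, and conclude that only condition~(ii) remains, which is the stated support-nesting condition via $\Lambda(A^{\sigma_{\Lambda}})=\{\rho^{\sigma_{\Lambda}}:\rho\in\Lambda(A)\}$. The paper's proof is in fact shorter than your write-up, omitting the explicit necessity/sufficiency recapitulation and simply invoking Theorem~\ref{t:core-robust} directly; your additional detail is fine but unnecessary, and note that in your necessity sketch the exponents should be $\rho_i^{s\sigma_{\Lambda}}$ (powers of $A^{\sigma_{\Lambda}}$) rather than $\rho_i^{t}$.
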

\begin{proof}
$A\in\Rpnn$ is core periodic if and only if $A^{\sigma}$ is core robust. Observe that $A^{\sigma}$ satisfies
condition (i) in Theorem~\ref{t:core-robust} since $\core(A)=\core(A^{\sigma})$,
and this is generated by the eigenvectors of $A^{\sigma}$. Hence core robustness of $A^{\sigma}$ is equivalent to
condition (ii) in Theorem~\ref{t:core-robust}. As
$\Lambda(A^{\sigma})=\{\rho^{\sigma}\mid \rho\in\Lambda(A)\}$, this condition is identical to the one of the present proposition.
\end{proof}

\subsection{Bijection and weak stability}

The concept of a {\em weakly stable matrix} was introduced by Butkovi\v{c} et al.~\cite{BSS-ws}.
It requires that the orbit of any vector $x$ {\bf does not} hit any eigenvector of $A$, unless
that vector $x$ already is an eigenvector. Equivalently we can write
\begin{equation}
\label{e:weaklystable}
Ay=z,\ z\in \vmax(A,\rho)\Rightarrow y=\rho^{-1}z.
\end{equation}
Weakly stable matrices can be characterized as follows~\cite{BSS-ws}:
each spectral class is initial and each critical graph is a Hamiltonian cycle.

We now study the {\em core weakly stable matrices}:
restricting vector $x$ in the above definition to $\core(A)$. Thus $A$ is core weakly stable if and only if
\begin{equation}
\label{e:weaklystable-core}
Ay=z,\ z\in \vmax(A,\rho)\ \text{and}\ y\in\coremax(A)\Rightarrow y=\rho^{-1}z.
\end{equation}

As we will shortly see, the core weak stability is equivalent to the matrix being bijective
on its core. We will need the following corollary of Lemma~\ref{l:sameaccess} and
Theorem~\ref{t:samespectrum}

\begin{corollary}
\label{c:sameaccess}
Let $\mu$ and $\nu$ be classes of $A$ with cyclicities $\sigma_{\mu}$
and $\sigma_{\nu}$, and let $\mu^{(k)}$ for $k=1,\ldots,$ gcd$(t,\sigma_{\mu})$
and $\nu^{(l)}$ for
$l=1,\ldots,$ gcd$(t,\sigma_{\nu})$ be the classes of $A^t$ derived from
$\mu$ and $\nu$ respectively. Then the following are equivalent:
\begin{itemize}
\item[(i)]  $\mu$ and $\nu$ are spectral and $\mu\to\nu$ (in $A$);
\item[(ii)]  all derived classes
$\mu^{(k)}$ and $\nu^{(l)}$ are spectral, and for each $k$ there is an $l$
and for each $l$ there is a $k$ such that $\mu^{(k)}\to\nu^{(l)}$ (in $A^t$);
\item[(iii)]  there exists a pair of derived classes
$\mu^{(k)}$ and $\nu^{(l)}$ that are spectral, such that $\mu^{(k)}\to\nu^{(l)}$
(in $A^t$).
\end{itemize}
\end{corollary}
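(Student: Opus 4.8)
The plan is to prove the equivalence of (i), (ii), (iii) in Corollary~\ref{c:sameaccess} by reducing everything to the two earlier results it is advertised as a corollary of, namely Lemma~\ref{l:sameaccess} and Theorem~\ref{t:samespectrum}. The statement bundles together two pieces of information: a \emph{spectrality} claim about the derived classes, and an \emph{access} claim. Theorem~\ref{t:samespectrum}(ii) handles the spectrality part (a class of $A^t$ is spectral iff its ancestor in $A$ is spectral, and each spectral $\mu$ gives rise to exactly $\gcd(t,\sigma_\mu)$ spectral derived classes), while Lemma~\ref{l:sameaccess} handles the access part (index-to-class access is preserved across powers, between related classes). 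Since the three conditions differ only in quantifier strength over the derived classes, the whole thing should come down to transporting access relations between $A$ and $A^t$ via related classes, and noting that \textbf{being spectral is an ancestor-invariant property}.

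First I would establish the cycle $(i)\Rightarrow(ii)\Rightarrow(iii)\Rightarrow(i)$. For $(i)\Rightarrow(ii)$: assuming $\mu,\nu$ are spectral in $A$, Theorem~\ref{t:samespectrum}(ii) immediately gives that every derived $\mu^{(k)}$ and every derived $\nu^{(l)}$ is spectral in $A^t$. For the access part, I would pick any node $i$ in the class $\nu$; since $\mu\to\nu$ in $A$, that index $i$ is accessed by a class with Perron root $\rho_\mu$ in $A$, so by Lemma~\ref{l:sameaccess} it is accessed, in $A^t$, by a \emph{related} class with Perron root $\rho_\mu^t$, i.e. by some $\mu^{(k)}$. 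Running the same argument over nodes lying in the various blocks $\nu^{(l)}$ should yield, for each $l$, some $k$ with $\mu^{(k)}\to\nu^{(l)}$; the symmetric quantifier (for each $k$ there is an $l$) follows by the same reasoning applied starting from a node in the block $\mu^{(k)}$ and using that $\mu\to\nu$ forces access forward. The implication $(ii)\Rightarrow(iii)$ is trivial: $(ii)$ asserts the existence of such a pair among all derived classes (and that they are spectral), so in particular one such pair exists.

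The genuinely substantive implication, and \textbf{the step I expect to be the main obstacle}, is $(iii)\Rightarrow(i)$: from a \emph{single} spectral pair $\mu^{(k)}\to\nu^{(l)}$ in $A^t$ I must recover that $\mu,\nu$ themselves are spectral and $\mu\to\nu$ in $A$. Spectrality of $\mu$ and $\nu$ follows from the converse direction of Theorem~\ref{t:samespectrum}(ii) (each spectral class of $A^t$ is derived from a spectral class of $A$), so the real work is the access claim. Here I would take a node $i$ in the block $\nu^{(l)}$; since $\mu^{(k)}\to\nu^{(l)}$ in $A^t$, this $i$ is accessed in $A^t$ by a class with Perron root $\rho_\mu^t$ related to $\mu^{(k)}$, namely $\mu^{(k)}$ itself, so by Lemma~\ref{l:sameaccess} (read in the reverse direction, $A^t\to A$) the index $i$ is accessed in $A$ by a related class with Perron root $\rho_\mu$, i.e. by $\mu$. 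Because $i$ lies in $\nu$ (the blocks $\nu^{(l)}$ of $A^t$ partition the same node set $N_\nu$, by Lemma~\ref{l:sameperron}), this gives $\mu\to\nu$ in $A$. The delicate point to check carefully is that the node sets are genuinely preserved: the derived classes $\nu^{(l)}$ sit inside $N_\nu$ as the blocks of the direct-sum decomposition of $(A_{\nu\nu})^t$ furnished by Lemma~\ref{l:sameperron}, so access \emph{to} the class $\nu$ in $A$ is the same as access to one of its blocks in $A^t$, and the related-class bookkeeping of Lemma~\ref{l:sameaccess} matches the Perron roots $\rho_\mu \leftrightarrow \rho_\mu^t$ correctly.
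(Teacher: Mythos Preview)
Your proposal is correct and aligns exactly with what the paper indicates: the paper states this result as a corollary of Lemma~\ref{l:sameaccess} and Theorem~\ref{t:samespectrum} without giving an explicit proof, and you have supplied precisely the expected derivation from those two inputs. The cycle $(i)\Rightarrow(ii)\Rightarrow(iii)\Rightarrow(i)$, with Theorem~\ref{t:samespectrum}(ii) handling spectrality in both directions and Lemma~\ref{l:sameaccess} transporting index-to-class access between $A$ and $A^t$ via related classes, is the natural (and essentially only) way to unpack the corollary, and your bookkeeping about the node sets $N_{\nu^{(l)}}\subseteq N_\nu$ via Lemma~\ref{l:sameperron} is the right justification for the one point that needs care.
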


Let us also recall the following well-known observation, which
we briefly prove for the reader's convenience.

\begin{lemma}[\cite{CGG-99} Lemma~1.4 part~4]
\label{l:zeq}
Let $x\in V(A,\rho)$, and let $x_{C}$ be the subvector of $x$ extracted from
the node set of the critical graph associated with $\rho$. Then $x$ is uniquely determined by $x_C$.
\end{lemma}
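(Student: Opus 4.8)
The plan is to produce an explicit reconstruction formula expressing every entry of $x$ through the critical entries $x_C$, which makes uniqueness immediate. First I would reduce to the principal eigenvalue: by Proposition~\ref{p:vamrho} we have $V(A,\rho)=V(\amrho,1)$, so setting $B:=\amrho$ (with $\rho(B)=1$) we may work with the condition $Bx=x$, and the critical graph associated with $\rho$ becomes $\crit(B)$, with critical node set $N_c$; the subvector $x_C$ is the restriction of $x$ to $N_c$. Iterating $Bx=x$ gives $B^tx=x$ for all $t\ge 0$, and since $\rho(B)=1$ the Kleene star converges, whence $x=B^*x$, that is, $x_k=\bigoplus_j (B^*)_{kj}x_j$ for every $k$.

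The heart of the argument is the claim that this maximum is always attained on a critical index, i.e.
$$ x_k=\bigoplus_{j\in N_c}(B^*)_{kj}x_j \qquad\text{for all } k. $$
The inequality ``$\ge$'' is trivial. For ``$\le$'' I would argue exactly as in the proof of Theorem~\ref{t:cyclicity}, using $x_k=\bigoplus_j (B^t)_{kj}x_j$ for large $t$ together with the path interpretation of $B^t$. Split the length-$t$ paths leaving $k$ into those that visit some critical node and those that do not. A path visiting a critical node $c$ and ending at $j$ has weight at most $(B^*)_{kc}(B^*)_{cj}$, so its contribution is at most $(B^*)_{kc}(B^*)_{cj}x_j\le (B^*)_{kc}\bigoplus_l(B^*)_{cl}x_l=(B^*)_{kc}x_c\le\bigoplus_{c'\in N_c}(B^*)_{kc'}x_{c'}$. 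A path avoiding all critical nodes contains, after cycle deletion, only non-critical cycles, whose means are bounded by the second largest cycle mean $\mu<1$; hence its weight is at most $C\mu^{t-n}$ for a constant $C$, and its contribution is at most $C\mu^{t-n}\max_l x_l$. Thus $x_k\le\bigoplus_{c\in N_c}(B^*)_{kc}x_c\oplus C\mu^{t-n}\max_l x_l$, and letting $t\to\infty$ kills the second term and yields the claim.

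Finally, the displayed formula expresses each $x_k$ through the entries $x_c$ with $c\in N_c$ (that is, through $x_C$) and the fixed matrix $B^*=\amrho^*$, which depends only on $A$ and $\rho$; hence $x$ is uniquely determined by $x_C$, proving the lemma. I expect the only delicate point to be the ``$\le$'' estimate: one must handle arbitrary length-$t$ paths through a critical node via the cycle-deletion/padding mechanism and control the vanishing non-critical contribution uniformly in $t$, but this is precisely the argument already established in Theorem~\ref{t:cyclicity}. As an alternative one could invoke Theorem~\ref{t:FVmaxalg}, writing $x$ as a max-combination of the fundamental eigenvectors $x_{\maxmu}$ and recovering each coefficient from the critical entries via the normalization $(B^*)_{ki}(B^*)_{ik}=1$ for $i,k$ in a common critical component; the reconstruction formula above, however, seems more direct.
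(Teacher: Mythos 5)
Your proof is correct, but it takes a genuinely different route from the paper's. After the common reduction to the principal eigenvalue $\rho=1$ via Proposition~\ref{p:vamrho}, the paper splits the index set into critical and noncritical parts, observes that the noncritical subvector satisfies the Bellman (Z-matrix) equation $x_N=A_{NC}x_C\oplus A_{NN}x_N$, and invokes uniqueness of its solution $(A_{NN})^*A_{NC}x_C$, which follows from $\rho(A_{NN})<1$ by the theory of~\cite{BSS-zeq}. You instead derive the reconstruction formula $x_k=\bigoplus_{c\in N_c}(B^*)_{kc}x_c$ directly from $x=B^tx$, splitting length-$t$ paths according to whether they meet a critical node and letting $t\to\infty$; the noncritical contribution is killed by the bound $\mu<1$ on noncritical cycle means, exactly as in the proof of Theorem~\ref{t:cyclicity}. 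The two explicit formulas differ slightly (your $k$-to-$c$ paths may revisit critical nodes, while the paper's $(A_{NN})^*A_{NC}$ forbids intermediate critical nodes), but each expresses $x$ as a fixed function of $x_C$, which is all that uniqueness requires. The paper's route buys brevity, since the asymptotic argument is delegated to the cited Z-matrix uniqueness result; your route is self-contained and makes the decay mechanism visible, though the underlying reason --- sub-unit geometric means of noncritical cycles, i.e.\ $\rho(A_{NN})<1$ --- is the same in both. Your closing remark about recovering coefficients of fundamental eigenvectors is a plausible third approach but is not needed given the displayed formula.
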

\begin{proof}
Using Proposition~\ref{p:vamrho}, assume without loss of generality that $\rho=1$ is the greatest eigenvalue of $A$ (in other words, the
maximum cycle geometric mean). The set of the critical nodes of $A$ induces the following block decomposition:
\begin{equation}
\label{e:ablock}
A=
\begin{pmatrix}
A_{CC} & A_{CN}\\
A_{NC} & A_{NN}
\end{pmatrix}
\end{equation}
with $A_{CC}$, resp. $A_{NN}$ being the principal submatrices extracted from the set of critical nodes, resp. its complement
(the noncritical nodes). For the subvector $x_N$ extracted from the noncritical nodes, we have
\begin{equation}
\label{e:bellman}
x_N=A_{NC} x_C\oplus A_{NN} x_N.
\end{equation}
For a given $x_C$, this is a max-algebraic Z-matrix (or Bellman) equation on $x_N$ as treated, for instance, in~\cite{BSS-zeq}.
As $A_{NN}$ has $\rho(A_{NN})<1$,  we obtain that $(A_{NN})^*A_{NC} x_C$ is the only solution,
hence $x_N$ is uniquely determined by $x_C$.
\end{proof}

\begin{theorem}
\label{t:bijection}
Let $A\in\Rpnn$. The following are equivalent:
\begin{itemize}
\item[(i)] the mapping induced by $A$ on its core is a bijection;
\item[(ii)] $A$ is core weakly stable;
\item[(iii)] if $\mu$ and $\nu$ are spectral classes of $A$ and $\mu\to\nu$ then $\rhomax_{\mu}=\rhomax_{\nu}$. In other words, spectral classes with different Perron roots do not access each other.
\end{itemize}
Moreover, if $A$ satisfies (i),(ii), or (iii) then $A^t$ satisfies
(i),(ii), or (iii), respectively, for every $t\geq 1$.
Also if $A^t$ satisfies (i),(ii), or (iii) for some $t\geq 1$ then the same
properties hold
for $A$.
\end{theorem}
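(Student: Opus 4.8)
The plan is to establish the cyclic chain of implications $(i)\Rightarrow(ii)\Rightarrow(iii)\Rightarrow(i)$ and then read off the final ``moreover'' statement. Throughout I use that, by Lemma~\ref{l:actioncore}(ii), the induced map is always surjective, so in $(i)$ ``bijective'' means exactly ``injective'' on scaled core vectors. With this, $(i)\Rightarrow(ii)$ is immediate: if $Ay=z\in V(A,\rho)$ with $y\in\core(A)$, then $\rho^{-1}z\in V(A,\rho)\subseteq\core(A)$ and $A(\rho^{-1}z)=z=Ay$, so injectivity forces $y=\rho^{-1}z$, which is precisely core weak stability.

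For $(ii)\Rightarrow(iii)$ I argue by contraposition. If $(iii)$ fails there are spectral classes with $\mu\to\nu$ and $\rhomax_\mu<\rhomax_\nu$; let $x_\mu\in V(A,\rhomax_\mu)$ and $x_\nu\in V(A,\rhomax_\nu)$ be corresponding fundamental eigenvectors. Theorem~\ref{t:FVmaxalg}(i') identifies their supports with the sets of indices accessing $\mu$, resp.\ $\nu$, so $\mu\to\nu$ yields $\supp(x_\mu)\subseteq\supp(x_\nu)$. I then put $w=\alpha x_\mu\oplus x_\nu$ and choose $\alpha$ strictly larger than the threshold below which $\alpha x_\mu$ is absorbed by $x_\nu$ (so that $w=x_\nu$), yet small enough that $\alpha\rhomax_\mu x_\mu$ is still absorbed by $\rhomax_\nu x_\nu$ (so that $Aw=\rhomax_\nu x_\nu$); this interval is nonempty precisely because $\rhomax_\nu/\rhomax_\mu>1$. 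For such $\alpha$ we have $w\in\core(A)$ and $Aw\in V(A,\rhomax_\nu)$ but $w\neq x_\nu$, contradicting $(ii)$.

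The implication $(iii)\Rightarrow(i)$ is the main obstacle. Put $\sigma=\sigma_{\Lambda}$ and, using Theorem~\ref{t:core}, decompose an arbitrary $y\in\core(A)$ as $y=\bigoplus_k y_k$ with $y_k\in V(A^{\sigma},\rhomax_k^{\sigma})$ over the distinct eigenvalues $\rhomax_k\in\Lambda(A)$. The geometric heart of the argument is that, under $(iii)$, components belonging to different eigenvalues vanish on one another's critical nodes. Writing $C_{\rhomax_k}$ for the node set of the critical graph of $\rhomax_k$ (the same for $A$ and $A^{\sigma}$ by Lemma~\ref{l:critpowers}), each $C_{\rhomax_k}$ lies in a spectral class of $A^{\sigma}$ of root $\rhomax_k^{\sigma}$, while by Theorem~\ref{t:FVmaxalg} the support of $y_j$ consists of indices accessing spectral classes of root $\rhomax_j^{\sigma}$; since $(iii)$ holds for $A^{\sigma}$ as well (Corollary~\ref{c:sameaccess}), a spectral class of root $\rhomax_k^{\sigma}$ cannot access one of the different root $\rhomax_j^{\sigma}$, so $y_j$ vanishes on $C_{\rhomax_k}$ whenever $j\neq k$. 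Consequently, if $A^{\sigma}y=A^{\sigma}y'$, restricting $\bigoplus_k\rhomax_k^{\sigma}y_k=\bigoplus_k\rhomax_k^{\sigma}y'_k$ to each $C_{\rhomax_k}$ gives $(y_k)|_{C_{\rhomax_k}}=(y'_k)|_{C_{\rhomax_k}}$, and Lemma~\ref{l:zeq} applied to $A^{\sigma}$ upgrades this to $y_k=y'_k$ for every $k$, hence $y=y'$. Thus $A^{\sigma}$, and therefore $A$, is injective on $\core(A)$; with surjectivity this yields $(i)$. The delicate points to get right are the vanishing-on-critical-nodes claim and the correct transfer of $(iii)$ to $A^{\sigma}$.

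Finally, the ``moreover'' part costs almost nothing. Corollary~\ref{c:sameaccess} shows that $\mu\to\nu$ with $\mu,\nu$ spectral in $A$ is equivalent to the analogous access relation among the derived spectral classes of $A^{t}$, whose Perron roots are the $t$-th powers $\rhomax_\mu^{t},\rhomax_\nu^{t}$; as $r\mapsto r^{t}$ is injective on $\Rp$, condition $(iii)$ holds for $A$ if and only if it holds for $A^{t}$. Since $(i)\Leftrightarrow(ii)\Leftrightarrow(iii)$ has been proved for an arbitrary matrix, all three properties transfer between $A$ and each $A^{t}$ in both directions, as asserted.
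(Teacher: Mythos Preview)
Your proof is correct and follows essentially the same route as the paper's: the cyclic chain $(i)\Rightarrow(ii)\Rightarrow(iii)\Rightarrow(i)$, with the last implication carried out for $A^{\sigma_{\Lambda}}$ via the decomposition into eigencomponents, the disjointness of spectral supports under $(iii)$, and Lemma~\ref{l:zeq}. The only noteworthy difference is in $(ii)\Rightarrow(iii)$: where the paper takes $\alpha$ large and lets the orbit run until it eventually reaches an eigenvector, you instead pick $\alpha$ in the interval $(\alpha_0,(\rho_\nu/\rho_\mu)\alpha_0]$ so that $w\neq x_\nu$ but already $Aw=\rho_\nu x_\nu$, giving the counterexample in a single step---a slightly cleaner variant of the same idea.
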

\begin{proof}
We show first that each of claims (i) and (iii) holds for $A$ if and only if it
holds for $A^t$ with an arbitrary $t$.
Then, the implications (i)$\Rightarrow$(ii) and (ii)$\Rightarrow$(iii)
will be shown for arbitrary $t\geq 1$, and the implication (iii)$\Rightarrow$(i)
will be shown for $t=\sigma_{\Lambda}$.

First observe that $\core(A^t)=\core(A)$ for all $t$. Then for (i),
the equivalence for all $t$ is clear.

To show the same equivalence for (iii) we use
Corollary~\ref{c:sameaccess} and argue by contradiction. By that Corollary, if $\mu$ and $\nu$ are
spectral in $A$ with $\mu\to\nu$ but $\rhomax_{\mu}\neq\rhomax_{\nu}$, then
for each $A^t$ there exist derived spectral classes $\mu'$ and $\nu'$ in $A^t$
such that $\mu'\to\nu'$ in $A^t$ and $\rhomax_{\mu'}\neq\rhomax_{\nu'}$. So if (iii) does not hold for $A$ then it does not hold for $A^t$. The
converse implication follows similarly.

Also note that by Lemma~\ref{l:actioncore} (ii) we know that $A$ and hence all its powers are surjective on the core.

(i)$\Rightarrow$ (ii) for every $A^t$: Assume by contradiction that the core weak stability~\eqref{e:weaklystable}, formulated for $A^t$ as
\begin{equation}
\label{e:weaklystables}
A^ty=z,\ z\in \vmax(A^t,\rho^t)\ \text{and}\ y\in\coremax(A^t)\Rightarrow y=\rho^{-t}z,
\end{equation}
is violated. Since $y=\rho^{-t}z$ satisfies $A^t y=z$, this means that
there is $z\in\coremax(A^t)=\coremax(A)$ and $y'\in\coremax(A)$ other than
$\rho^{-t}z$ such that $A^ty'=z$. This means that $A^t$ and hence $A$ are not bijective on the core.

(ii)$\Rightarrow$ (iii) for all $A^t$, assuming w.l.o.g. $t=1$:
If (iii) does not hold, then there exist $\mu,\nu$ spectral with $\mu\to\nu$ such that
$\rhomax_{\mu}<\rhomax_{\nu}$. Then by Theorem~\ref{t:FVmaxalg}(i') $\core(A)$
contains two eigenvectors
$x_{\mu}$ and $x_{\nu}$ with $\supp(x_{\mu})\subset\supp(x_{\nu})$
corresponding to the Perron roots $\rhomax_{\mu}<\rhomax_{\nu}$.
Taking $\beta\neq 0$ and $\alpha$ sufficiently large, we make a combination $\alpha x_{\mu}\oplus\beta x_{\nu}$,
different from both $\alpha x_{\mu}$ and $\beta x_{\nu}$. However,
$$ A^t (\alpha x_{\mu}\oplus\beta x_{\nu})=\alpha\rhomax_{\mu}^t x_{\mu}\oplus
\beta\rhomax_{\nu}^t x_{\nu}=\beta\rhomax_{\nu}^t x_{\nu},$$
starting from some $t$, but not for all $t$. So $x_{\nu}$ has a preimage in the core which is not an eigenvector, and $A$ is not core weakly stable.

(iii)$\Rightarrow$ (i) for $s=\sigma_{\Lambda}$. Let $\sigma:=\sigma_{\Lambda}$,
for brevity.
We need to show that if that if $A^{\sigma}z=A^{\sigma}z'$ for $z,z'\in\core(A)$
then $z=z'$. We can write $z=\bigoplus_{\mu} x_{\mu}$ and
$z'=\bigoplus_{\mu} y_{\mu}$ where $x_{\mu}$, $y_{\mu}$ are eigenvectors
of $A^{\sigma}$ corresponding to $\rho_{\mu}^{\sigma}$ or zero vectors, with
$\mu$ running over all eigenvalues of $A$. Then we have
\begin{equation}
A^{\sigma}\left(\bigoplus_{\mu} x_{\mu}\right)=\bigoplus_{\mu} \rho_{\mu}^{\sigma} x_{\mu},\
A^{\sigma}\left(\bigoplus_{\mu} y_{\mu}\right)=\bigoplus_{\mu} \rho_{\mu}^{\sigma} y_{\mu},
\end{equation}
and our goal is to show that
\begin{equation}
\label{e:xmu=ymu}
\bigoplus_{\mu} \rhomax_{\mu}^{\sigma} x_{\mu}= \bigoplus_{\mu} \rhomax_{\mu}^{\sigma} y_{\mu}\Rightarrow\\
x_{\mu}=y_{\mu}\quad\forall \mu,
\end{equation}
which implies $z=z'$. For~\eqref{e:xmu=ymu}, observe that by Theorem~\ref{t:FVmaxalg}(i') and the assumed condition (iii),
the support of any vector in $V(A^{\sigma},\rho^{\sigma})$ contains only indices in
spectral classes with $\rho^{\sigma}$ or indices in non-spectral and trivial classes. Then the {\em spectral part of support}, i.e., the part consisting of
all indices in spectral classes, is disjoint from the spectral part of support
of any vector in $V(A^{\sigma},\Tilde{\rho}^{\sigma})$ when $\Tilde{\rho}\neq\rho$.
Therefore in any max-algebraic sum of $x_{\mu}\in \vmax(A^{\sigma},\rhomax_{\mu}^{\sigma})$
with different $\rhomax_{\mu}$, as in~\eqref{e:xmu=ymu}, the subvectors
of $x_{\mu}$ in spectral classes are determined uniquely. This shows that the
components of $x_{\mu}$ and $y_{\mu}$ in spectral classes associated with
$\rhomax_{\mu}$ are the same.

By Lemma~\ref{l:zeq} any eigenvector
is uniquely determined by its critical components, and hence $x_{\mu}$ and
$y_{\mu}$ in~\eqref{e:xmu=ymu} are uniquely determined by their components in
spectral classes, implying that $x_{\mu}=y_{\mu}$ for all $\mu$, which
is~\eqref{e:xmu=ymu}. Then $z=z'$, and the proof is complete.
\end{proof}


\end{document}